\numberwithin{equation}{section}
\newtheorem{theorem}{Theorem}[section]
\newtheorem{lemma}[theorem]{Lemma}
\newtheorem{definition}[theorem]{Definition}
\newtheorem{proposition}[theorem]{Proposition}
\theoremstyle{remark}
\newtheorem{remark}{Remark}
\newcommand{\with}{\quad\hbox{with}\quad}
\newcommand{\andf}{\quad\hbox{and}\quad}
\def\wt{\widetilde}
\def\cB{{\mathcal B}}
\def\cC{{\mathcal C}}
\def\cD{{\mathcal D}}
\def\cL{{\mathcal L}}
\def\cP{{\mathcal P}}
\def\cS{{\mathcal S}}
\def\du{\delta\!u}
\def\dv{\delta\!v}
\def\dB{\delta\!B}
\def\dJ{\delta\!J}
\def\dU{\delta\!U}
\def\div{ \hbox{\rm div}\,  }
\def\curl{ \hbox{\rm curl}\,  }
\def\N{{\mathbb N}}
\def\R{{\mathbb R}}
\def\Z{{\mathbb Z}}
\def\ddj{\dot\Delta_j}
\def\eps{\varepsilon}
\begin{document}
\title[\hfilneg \hfil ]
{On the well-posedness of  the Hall-magnetohydrodynamics system in critical spaces}

\author{RAPHA\"{E}L DANCHIN AND JIN TAN}
%\dedicatory{}

%\thanks{}
\subjclass[2010]{35Q35; 76D03; 86A10}
\keywords{Magnetohydrodynamics,  Hall effect, Well-posedness, critical regularity}

\begin{abstract}
 We investigate  the existence and uniqueness issues of  the 3D incompressible Hall-magnetohydrodynamic system supplemented  with initial
 velocity $u_0$ and magnetic field $B_0$   in critical regularity spaces. 

In the case where  $u_0,$ $B_0$ and the current $J_0:=\nabla\times B_0$  belong to the homogeneous
Besov space $\dot B^{\frac 3p-1}_{p,1},$   $\:1\leq p<\infty,$
and are small enough, we establish a global result and  the conservation of higher regularity.
If  the viscosity is equal to the  magnetic resistivity,  then
we obtain the global well-posedness provided  $u_0,$ $B_0$ and $J_0$ 
are small enough  in 
the \emph{larger} Besov space  $\dot B^{\frac12}_{2,r},$  $r\geq1.$
If   $r=1,$ then we  also establish the local existence for large data, 
and exhibit  continuation criteria for solutions with critical regularity.
 
%To the best of our knowledge, the present paper is the first one where  data 
%are only assumed to have critical  regularity.
 Our results rely on an extended formulation of  
the Hall-MHD system, that  has some   similarities with the  incompressible 
Navier-Stokes  equations. 
\end{abstract}

\maketitle
\section{Introduction}
We are concerned with 
the following three dimensional incompressible resistive and viscous Hall-magnetohydrodynamics system (Hall-MHD):
\begin{align}
&\partial_t{\mathnormal u}+\div(\mathnormal u\otimes\mathnormal u)+\nabla \mathnormal P=(\nabla\times \mathnormal B)\times \mathnormal B+\mu\Delta \mathnormal u\label{1.1},\\
&\div\mathnormal u=0\label{1.2},\\
&\partial_t{\mathnormal B}=\nabla\times((\mathnormal u -h\nabla\times\mathnormal B) \times \mathnormal B)+\nu\Delta \mathnormal B\label{1.3}, 
\end{align}
supplemented with the initial conditions
\begin{equation}
(\mathnormal u(0,\mathnormal x), \mathnormal B(0,\mathnormal x))=(\mathnormal u_{0}(\mathnormal x), \mathnormal B_{0}(\mathnormal x)),\quad
x\in\R^3.\label{1.4}
\end{equation}
The unknown vector-fields   $\mathnormal u=\mathnormal u(t,x) $ and  
$\mathnormal B=\mathnormal B(t,x),$ and scalar function $\mathnormal P=\mathnormal P(t,x)$ 
with $t\geq0$ and $x\in\R^3$
represent the velocity field, the magnetic field and the scalar pressure, respectively.  The parameters $\mu$ and $\nu$ are the fluid viscosity and the magnetic resistivity, while the dimensionless number $h$ measures   the 
magnitude  of the Hall  effect compared to  the typical length scale of the fluid. 
In accordance with \eqref{1.2},  we assume that $\div u_0=0$ 
and, for physical consistency,  since a magnetic field has to be 
divergence free,  we suppose  that $\div B_0=0,$ too,
a property that is conserved  through the evolution. 
 \medbreak
The above system is used to model the evolution of electrically conducting fluids such as 
plasmas or electrolytes (then, $u$ represents the ion velocity), and takes into account 
 the fact that in a moving conductive fluid,  the magnetic field can induce currents which, in turn, polarize the fluid and change the magnetic field. 
 That phenomenon which is neglected in the classical MHD equations, 
is represented by  the \emph{Hall electric field}
$E_H:=h J\times B$ where the current  $J$ is defined by $J:=\nabla\times B.$
%$e$ designates the charge of electrons, $c$ the light speed and 
%$n$ the (constant) density of electrons and ions (under the quasineutral assumption). 
%This is  applicable to phenomena occurring on length scales shorter than an ion inertial length, and %time scales shorter than an ion cyclotron period, and the Hall term decouples ion and electron %motion on ion inertial length scales.
  Hall term plays an important role in magnetic reconnection, as observed in e.g. 
  %   which is happening in the case of large magnetic shear. For more explanation on the physical background of the importance of Hall-MHD system, such as 
  plasmas, star formation, solar flares, neutron stars or geo-dynamo (for  more explanation on the physical background of the Hall-MHD system, one can refer to  \cite{Al11,Ba01,Fo91,Hu03,Sh97,Wa04}).
\medbreak
Despite its physical relevance,  the Hall-MHD system has been considered 
only rather recently in mathematics, following the work by Acheritogaray, Degond, Frouvelle and Liu in  \cite{Ac11} 
where the Hall-MHD system was formally derived both   from a two fluids system and  from a  kinetic model. Then, in \cite{Ch13}, Chae, Degond and  Liu showed the global existence of weak solutions as well as the local well-posedness for  initial data $u_0$ and $B_0$ in Sobolev spaces $H^{s}$ with $s>5/2$.  Weak solutions have been further investigated 
by  Dumas and Sueur in \cite{Ds14} both for the Maxwell-Landau-Lifshitz system and for the Hall-MHD system. In \cite{Ch14,Ye15}, Serrin type continuation criteria for smooth solutions and the global existence of strong solutions  emanating from  small initial data have been obtained. 
In \cite{Z16}, it has been observed that the possible blow-up of smooth solutions  
may be controlled \emph{in terms of the velocity only.}
More  well-posedness  results of strong solutions for  
less regular data in Sobolev  or Besov spaces, have been 
established in  \cite{Be16},  \cite{Yt18} and \cite{Wz15}, and the convergence
to the  MHD system with no Hall-term for $h\to0$ has been addressed in \cite{WZ19}. 
Examples of smooth data with arbitrarily large  $L^\infty$ norms
giving rise to  global unique solutions have been exhibited very recently in \cite{Li19}. 
\smallbreak
Our main goal  here is to establish  the well-posedness  of the Hall-MHD system  with initial data in \emph{critical spaces.} 
In contrast with the classical MHD system (that corresponds to $h=0$) however,
the system under consideration does not have any scaling 
invariance owing to the coexistence of the Hall term
 in \eqref{1.3} and of the Lorentz force in \eqref{1.1}, 
 and we  have to explain what we mean by critical regularity. 
\smallbreak
On the one hand, 
for  $B\equiv0,$ the Hall-MHD system reduces to  the incompressible Navier-Stokes: 
$$\left\{
\begin{aligned}
&\partial_t{\mathnormal u}+(\mathnormal u\cdot\nabla)\mathnormal u+\nabla \mathnormal P=\mu\Delta \mathnormal u,\\
&\div\mathnormal u=0,\\
&u|_{t=0}=u_{0},
\end{aligned}
\right.\leqno(NS)$$
which are invariant for all  $\lambda>0$ by the rescaling 
\begin{equation}\label{eq:solutionNS}
u(t,x)\leadsto  \lambda u({\lambda^{2}t, \lambda x})\andf P(t,x)\leadsto  
\lambda^2P({\lambda^{2}t, \lambda x})
\end{equation}
provided the initial velocity  $u_0$ is rescaled according to 
\begin{equation}\label{eq:dataNS}
u_0(x)\leadsto \lambda u_0(\lambda x).
\end{equation}
On the other hand,   if the fluid velocity  in \eqref{1.3} is $0,$ 
then we get the following \emph{Hall equation} for $B$:
$$\left\{\begin{aligned}
&\partial_t{\mathnormal B}+h\nabla\times((\nabla\times\mathnormal B)\times \mathnormal B)
=\nu\Delta \mathnormal B,\\
&B|_{t=0}=B_0,\end{aligned}\right.\leqno(Hall)$$
which is invariant by the rescaling 
\begin{equation}\label{eq:solutionHall}
B(t,x)\leadsto  B({\lambda^{2}t, \lambda x})
\end{equation}
provided the data $B_0$ is rescaled according to 
\begin{equation}\label{eq:dataHall}
B_0(x)\leadsto B_0(\lambda x).
\end{equation}
Therefore,  if $h>0$ and if we neglect the  Lorentz force in \eqref{1.1}, 
 then  it is natural to work at the same level of regularity for $u$ and $\nabla B,$
 while for $h=0$ (the classical MHD system)
$u$ and $B$ have the same scaling invariance. 
 \medbreak
% In order to transform the Hall-MHD system into a system having some scaling invariance, 
The way to reconcile the two viewpoints is to 
 look at  the current function  $J=\nabla\times B$ as an additional unknown. 
  Now, owing to the vector identity
\begin{equation}
\nabla\times(\nabla\times v) + \Delta v = \nabla{\rm{div}}~v\label{1.600}
\end{equation}
and since $B$ is divergence free,  we have  
$\Delta  B=- \nabla\times J,$
whence
$$
 B={\rm{curl}}^{-1}J{:=}(-\Delta)^{-1}\nabla\times J,$$
where the $-1$ order homogeneous Fourier multiplier ${\rm{curl}}^{-1}$ is defined 
 on the Fourier side by 
\begin{equation}\label{eq:curl-1}\mathcal{F}({\rm{curl}}^{-1}J)(\xi):=\frac{i\xi\times\widehat{J}(\xi)}{|\xi|^2}\cdotp\end{equation}
With that notation, one gets the following \emph{extended Hall-MHD system}:
\begin{equation}\label{eq:HMHD-ext}
\left\{\begin{aligned}
&\partial_t{\mathnormal u}+\div(\mathnormal u\otimes\mathnormal u)-\mu\Delta \mathnormal u+\nabla \mathnormal P=(\nabla\times \mathnormal B)\times \mathnormal B,\\
&\div\mathnormal u=0,\\
&\partial_t{\mathnormal B}-\nabla\times((\mathnormal u -h J) \times \mathnormal B)-\nu\Delta \mathnormal B=0,\\
 &\partial_t{\mathnormal J}-\nabla\times\bigl(\nabla\times((\mathnormal u -h J) \times {\rm{curl}}^{-1}\mathnormal J)\bigr)-\nu\Delta \mathnormal J=0.
\end{aligned}\right.\end{equation}
In contrast with the original Hall-MHD system, the above extended system
 has  a scaling invariance (the same as  for the incompressible Navier-Stokes equations).   Studying  whether
the Hall-MHD system written in terms of $(u,B,J)$  is well-posed 
in  the same functional spaces as the  velocity in $(NS),$ and if similar blow-up criteria and qualitative behavior may be established is the main aim of the paper.
\medbreak\noindent{\bf Notation.} 
We end this introductory part presenting  a few notations. 
We denote by $C$ harmless positive `constants' that  may change from one line to the other, and we sometimes write $A\lesssim B$ instead of $A\leq C B.$  Likewise,    $A\sim B$ means that  $C_1 B\leq A\leq C_2 B$ with absolute constants $C_1$, $C_2$. For $X$ a Banach space, $p\in[1, \infty]$ and $T\in(0,\infty]$, the notation $L^p(0, T; X)$ or $L^p_T(X)$ designates the set of measurable functions $f: [0, T]\to X$ with $t\mapsto\|f(t)\|_X$ in $L^p(0, T)$, endowed with the norm $\|\cdot\|_{L^p_{T}(X)} :=\|\|\cdot\|_X\|_{L^p(0, T)}.$ For any interval $I$ of $\R,$ we agree that $\mathcal C(I; X)$ (resp. ${\mathcal C}_b(I;X)$) denotes the set of continuous 
(resp. continuous and bounded) functions from $I$ to $X$.  We  keep the same notation for functions with several components.
\medbreak\noindent{\bf Acknowledgments.}  
 The authors are  indebted  to the anonymous referees 
 for their wise  suggestions that  contributed to improve the final version of this work. 
 The first author is partially  supported  by the ANR project INFAMIE (ANR-15-CE40-0011).
 The second author has been partly funded by the B\'ezout Labex, funded by ANR, 
 reference ANR-10-LABX-58.

%%%%%%%%%%%%%%%%%%%%%%%%%%%%%%%%%%%%%%%%%

\section{Main results}

It is by now classical that  the incompressible Navier-Stokes equations 
 are well-posed (locally for large data or globally for small data) 
 in all homogeneous Besov spaces $\dot B^{\frac3p-1}_{p,r}$ with 
 $1\leq p<\infty$ and $1\leq r\leq \infty,$ see \cite{cannone,Ch99}.
Similar results have been obtained  for 
 the standard incompressible MHD system (that is with no Hall term)
 by C. Miao and B. Yuan in \cite{Mi09}. 
 According to  the scaling considerations of the above paragraph, 
 it is  natural to look for  similar results  for the Hall-MHD system 
written in its extended formulation \eqref{eq:HMHD-ext}.

%Here,  we shall address that issue in full generality   
%if the last index of the Besov space is  $r=1,$ and under a smallness condition on the magnetic field.
%When the Lebesgue index $p$ is equal to $2,$ we shall get better results 
%\emph{provided \footnote{An assumption that is  made in most mathematical papers devoted to the Hall-MHD system even though there is no reason why this should be true from the physical viewpoint.}
  %$\mu=\nu.$} 
\medbreak
Let us first  consider the case of positive general coefficients $\mu,$ $\nu$ and $h,$ 
for  data $(u_0,B_0,J_0)$  in   $\dot B^{\frac3p-1}_{p,1}.$ 
Following the paper \cite{Ch99}  by  J.-Y. Chemin 
dedicated to the  incompressible Navier-Stokes equations, 
we introduce for $T>0,$ the   space 
$$E_{p}(T){:=}\Big\{z\in \cC([0, T], \dot B^{\frac{3}{p}-1}_{p, 1}),~\nabla_x^2 z\in  L^1(0, T; \dot B^{\frac{3}{p}-1}_{p, 1})\andf \div_{\!x}\,z=0\Big\}
$$
and its global version  $E_{p}$ (with $z\in\cC_b(\R_+;\dot B^{\frac{3}{p}-1}_{p, 1})$)  if  $T=+\infty$.
\medbreak
Our first result states the global well-posedness of the Hall-MHD system for small data in $\dot B^{\frac3p-1}_{p,1},$
 and conservation of higher order Sobolev regularity for \emph{any positive coefficients $\mu,$ $\nu$ and $h$}. 
  \begin{theorem}\label{Th_1}
Let $1\leq p<\infty$ and  $(u_0, B_0)\in{\dot B^{\frac{3}{p}-1}_{p, 1}}$ 
with $\div u_0=\div B_0=0$ 
and $J_0:=\nabla\times B_0\in{\dot B^{\frac{3}{p}-1}_{p, 1}}.$  There exists a constant $c>0$  
depending only on $p$ and $\mu/\nu$ such that, if 
\begin{equation}\label{eq:smallness} 
\|u_0\|_{\dot B^{\frac{3}{p}-1}_{p, 1}}+\|B_0\|_{\dot B^{\frac{3}{p}-1}_{p, 1}}+h\|J_0\|_{\dot B^{\frac{3}{p}-1}_{p, 1}}<c\mu,
\end{equation}
 then the Cauchy problem \eqref{1.1}-\eqref{1.4} has a unique global solution $(u, B)\in E_{p},$  with $J:=\nabla\times B\in E_{p}.$ Furthermore, 
\begin{equation}
\|u\|_{E_{p}}+\|B\|_{E_{p}}+h\|J\|_{E_{p}}<2c\mu\label{1.1200}.
\end{equation}
If, in addition, $u_0\in H^s$ and  $B_0\in H^r$ with
\begin{equation}\label{cond:reg}\frac{3}{p}-1<s\leq r\andf\frac{3}{p}<r\leq 1+s, \end{equation}
then
$(u,B)\in\cC_b(\R_+;H^s\times H^r),$ $\nabla u\in L^2(\R_+;H^s)$ and~$\nabla B\in L^2(\R_+;H^r)$
and the following energy balance is fulfilled for all $t\geq0$:
\begin{equation}\label{eq:energy}
\|u(t)\|_{L^2}^2+\|B(t)\|_{L^2}^2 
+2\int_0^t\bigl(\mu\|\nabla u\|_{L^2}^2+\nu\|\nabla B\|_{L^2}^2\bigr)\,d\tau=
\|u_0\|_{L^2}^2+\|B_0\|_{L^2}^2.\end{equation} 
Finally, in the case where only $J_0$ fulfills \eqref{eq:smallness}, 
there exists some time $T>0$ such that  \eqref{1.1}-\eqref{1.4}
has a unique local-in-time solution on $[0,T]$ with $(u,B,J)$ in $E_{p}(T),$
and additional Sobolev regularity is preserved. 
\end{theorem}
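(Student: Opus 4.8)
The plan is to solve the extended system \eqref{eq:HMHD-ext} by a Navier--Stokes-type fixed point argument in $E_p$, and then to recover a solution of the original problem \eqref{1.1}--\eqref{1.4} by verifying that the relation $J=\nabla\times B$ is preserved by the flow. The first ingredient is a purely algebraic simplification: since $\div B=0$, the Lorentz force can be written in divergence form,
\begin{equation*}
(\nabla\times B)\times B=\div(B\otimes B)-\tfrac12\nabla|B|^2,
\end{equation*}
so that, after applying the Leray projector $\mathbb P$, the $u$-equation carries exactly the Navier--Stokes quadratic term $\mathbb P\,\div(B\otimes B-u\otimes u)$. Setting $Z=(u,B,J)$, the system takes the abstract form $Z=Z_L+\cQ(Z,Z)$, where $Z_L$ is the free heat evolution (with coefficient $\mu$ on $u$ and $\nu$ on $B,J$) and $\cQ$ gathers all bilinear terms. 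The linear step rests on the maximal-regularity estimate for the heat flow: if $\partial_t z-\kappa\Delta z=f$ and $z|_{t=0}=z_0$, then
\begin{equation*}
\|z\|_{L^\infty_T(\dot B^{\frac3p-1}_{p,1})}+\kappa\|z\|_{L^1_T(\dot B^{\frac3p+1}_{p,1})}\lesssim\|z_0\|_{\dot B^{\frac3p-1}_{p,1}}+\|f\|_{L^1_T(\dot B^{\frac3p-1}_{p,1})},
\end{equation*}
so that $Z_L\in E_p$ is controlled by the data, the ratio $\mu/\nu$ entering when the three components are compared.

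The core is the bilinear bound $\|\cQ(Z,Z')\|_{E_p}\lesssim\|Z\|_{E_p}\|Z'\|_{E_p}$, measured in the weighted norm $\|u\|_{E_p}+\|B\|_{E_p}+h\|J\|_{E_p}$. Every term is handled by the Bony decomposition and product laws in $\dot B^s_{p,1}$, together with the interpolation $L^\infty_T(\dot B^{\frac3p-1}_{p,1})\cap L^1_T(\dot B^{\frac3p+1}_{p,1})\hookrightarrow L^2_T(\dot B^{\frac3p}_{p,1})$. The convection and Lorentz terms need $u\otimes u,B\otimes B\in L^1_T(\dot B^{\frac3p}_{p,1})$, which holds because $\dot B^{\frac3p}_{p,1}$ is an algebra and both factors lie in $L^2_T(\dot B^{\frac3p}_{p,1})$; the induction term $\nabla\times((u-hJ)\times B)$ is similar, its single curl being absorbed by the gain to $\dot B^{\frac3p+1}_{p,1}$. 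The delicate term is the current equation, where the double curl costs two derivatives: one exploits that $\mathrm{curl}^{-1}$ gains one derivative, so $\mathrm{curl}^{-1}J$ sits one level above $J$, and estimates $(u-hJ)\times\mathrm{curl}^{-1}J$ in $L^1_T(\dot B^{\frac3p+1}_{p,1})$ by pairing a high-regularity factor in $L^1_T(\dot B^{\frac3p+1}_{p,1})$ with a low one in $L^\infty_T(\dot B^{\frac3p-1}_{p,1})$. The $h$ weights are chosen so that, with $K:=h\|J\|_{E_p}$, the three scalar inequalities close as $X\lesssim X_0+X^2$ for $X=\|u\|_{E_p}+\|B\|_{E_p}+K$; under \eqref{eq:smallness} the standard contraction lemma then produces a unique fixed point obeying \eqref{1.1200}, and uniqueness in the whole space follows from the same bilinear stability estimate. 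Finally, taking the curl of the $B$-equation shows that $W:=J-\nabla\times B$ solves a linear parabolic equation with zero data, whence $J=\nabla\times B$ and $(u,B)$ solves \eqref{1.1}--\eqref{1.4}.

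To propagate $H^s\times H^r$ regularity under \eqref{cond:reg}, I would run energy estimates in $H^s$ for $u$ and $H^r$ for $B$, bounding the nonlinear terms by product and commutator estimates and closing by Gronwall, the needed Lipschitz control $\nabla u,\nabla B,\nabla J\in L^1_T(L^\infty)$ coming from $\dot B^{\frac3p+1}_{p,1}\hookrightarrow\dot B^{1}_{\infty,1}$; the inequalities $\frac3p-1<s\le r$ and $\frac3p<r\le1+s$ are precisely what makes the products in the Lorentz, induction and Hall terms balance at these two regularity levels. The energy identity \eqref{eq:energy} follows by testing \eqref{1.1} with $u$ and \eqref{1.3} with $B$: the Hall contribution vanishes since $(J\times B)\perp J$, while the Lorentz force and the induction term cancel through $\int(J\times B)\cdot u+\int(u\times B)\cdot J=0$. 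The local result for large $(u_0,B_0)$ with only $h\|J_0\|$ small uses the same scheme on a short interval: for fixed data the free evolution is small in $L^1_T(\dot B^{\frac3p+1}_{p,1})$ as $T\to0$, so every quadratic term coupling a large $L^\infty_T(\dot B^{\frac3p-1}_{p,1})$ factor to a small $L^1_T$ factor is controlled---except the self-interaction $h\,J\times\mathrm{curl}^{-1}J$, whose $L^\infty_T(\dot B^{\frac3p-1}_{p,1})$ part does not shrink with $T$, which is exactly why $h\|J_0\|$ must be small.

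I expect the main obstacle to be the current equation: reconciling its two-derivative loss with the single-derivative gain of $\mathrm{curl}^{-1}$ while keeping the time-integrability budget and the $h$ weights mutually compatible, so that $u$, $B$ and $J$ close together as one quadratic fixed point. Together with the divergence-form rewriting of the Lorentz force---which makes the velocity equation genuinely Navier--Stokes-like---this is what allows the whole analysis to be carried out at the critical level $\dot B^{\frac3p-1}_{p,1}$.
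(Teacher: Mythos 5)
Your proposal mirrors the paper's proof almost step by step: the same extended formulation in $(u,B,J)$, the same quadratic fixed point in $E_p$ via maximal regularity for the heat flow, the same key pairing of an $L^\infty_T(\dot B^{\frac3p-1}_{p,1})$ factor with an $L^1_T(\dot B^{\frac3p+1}_{p,1})$ factor in the current equation (the paper's estimate for $\div(w\otimes\mathrm{curl}^{-1}v)$, which exploits $\div(\mathrm{curl}^{-1}v)=0$ to put both derivatives on $w$), the same recovery of the constraint $J=\nabla\times B$ through a linear parabolic equation for the difference with zero data, and the same diagnosis of why only $h\|J_0\|$ must be small in the local statement. So the architecture is right; the comparison reduces to whether each step closes as you describe.

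One step, as written, does not close. In the propagation of $H^s\times H^r$ regularity, your Gronwall argument based on $\nabla u,\nabla B\in L^1_T(L^\infty)$ fails at the Hall term: testing the $B$-equation at level $H^r$ produces $(\Lambda^r(J\times B),\Lambda^r J)$, and the tame product estimate yields a contribution $C\|B\|_{L^\infty}\|J\|_{H^r}^2\sim C\|B\|_{L^\infty}\|\nabla B\|_{H^r}^2$, which is of the same order as the dissipation; the time integrability of the coefficient is useless here because it multiplies the dissipation rather than the energy, so no Gronwall closure is possible from this bound alone. The paper closes it by invoking the smallness of $\|B\|_{L^\infty}\lesssim\|(u_0,B_0,J_0)\|_{\dot B^{\frac3p-1}_{p,1}}$ established in the first part, absorbing the term into the left-hand side; alternatively one can exploit the cancellation $\bigl((\Lambda^r J)\times B\bigr)\cdot\Lambda^r J=0$ and estimate the commutator $\Lambda^r(J\times B)-(\Lambda^r J)\times B$, but your text must commit to one of these mechanisms, since the bare product estimate you invoke is insufficient. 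Two smaller repairs: in the local step the dangerous terms are \emph{all} those pairing $\|J^L\|_{L^\infty_T(\dot B^{\frac3p-1}_{p,1})}$ against $\|\wt u\|_{L^1_T(\dot B^{\frac3p+1}_{p,1})}$ or $\|\wt J\|_{L^1_T(\dot B^{\frac3p+1}_{p,1})}$ --- the cross term involving $u$ and $\mathrm{curl}^{-1}J$ is exactly as problematic as the self-interaction you single out, which is why the paper isolates a genuinely \emph{linear} operator $\cL$ and uses a fixed-point lemma with a linear part of norm less than one; and for uniqueness beyond the small-data regime, the bilinear stability estimate must be combined with an absorption argument using the smallness of $\|J\|_{L^\infty_T(\dot B^{\frac3p-1}_{p,1})}$ for the constructed solution and Gronwall with time-integrable coefficients, while the formal energy computations should be justified on smooth approximate solutions (the paper regularizes the data, solves in Sobolev spaces, and passes to the limit by compactness and continuity of the flow map).
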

\begin{remark}   For $h$ going to $0,$ we recover
the same smallness condition as in the paper   \cite{Mi09} dedicated
to the MHD system with no Hall-term.
\end{remark}
\begin{remark}
Whether the smallness condition on $J_0$ may be omitted in the context 
of general critical regularity spaces $\dot B^{\frac3p-1}_{p,1}$ is an open question. 
The difficulty not only comes from Hall term but also from the coupling between $u$ and $B$ through the term $\nabla\times(u\times B).$
For essentially the same reason, we do not know how to solve the system in $\dot B^{\frac3p-1}_{p,r}$
if $r>1,$ unless $p=2$ and $\mu=\nu,$ as we shall see later on. 
\end{remark}
\bigbreak
The key to the proof of Theorem \ref{Th_1} 
is to consider  the extended Hall-MHD system  \eqref{eq:HMHD-ext}, suitably rewritten 
in the form of  a generalized Navier-Stokes system that may be solved by implementing
the  classical fixed point theorem in  the  Banach space $E_p.$   
In order to derive an appropriate formulation of the system, we need
to use  some algebraic identities. 
The first one is  that for any couple of $C^1$  divergence free vector-fields  $v$ and $w$ on $\mathbb R^{3},$ we have
\begin{equation}
w\cdot\nabla v={\rm{div}}(v\otimes w),\ \hbox{ where }\ \bigl({\rm{div}}(v\otimes w)\bigr)^{j}{:=}\sum_{k=1}^{3}\partial_{k}(v^{j}w^{k})\label{1.7}.
\end{equation}
Observe also that 
\begin{equation}
(\nabla\times\mathnormal w)\times\mathnormal w=(\mathnormal w\cdot\nabla)\mathnormal w-\nabla\Bigl(\frac{|\mathnormal w|^{2}}{2}\Bigr)\cdotp
\end{equation}
Hence, setting $Q:= P+|\mathnormal B|^{2}/{2},$ equation  \eqref{1.1} recasts in
\begin{equation}\label{eq:u1}
\partial_t u+{\rm{div}}(u\otimes u)+\nabla Q={\rm{div}}(B\otimes B)+\mu\Delta u.
\end{equation}
After projecting \eqref{eq:u1}  onto the set of divergence free vector fields
by means of the \emph{Leray projector} ~$\mathcal{P}:=\rm{Id}-\nabla(-\Delta)^{-1}{\rm{div}},$ we get
% be the so-called Leray projector  defined on the Fourier space by
%\begin{equation*}
%\mathcal{F}(\mathcal{P}f)^{j}(\xi)=\sum_{k=1}^{3}(\delta^{j, k}-1)\frac{\xi_{j}\xi_{k}}{|\xi|^{2}}\widehat{f^{k}}(\xi),~~~\delta_{j, j}=1~~ {\rm{and}} ~~\delta_{j, k}=0,~~{\rm{if}}~~j\neq k.
%\end{equation*}
\begin{equation}
\partial_t u-\mu\Delta u=Q_{a}(B, B)-Q_{a}(u, u)\label{1.9},
\end{equation}
where the bilinear form $Q_{a}$ is defined by
\begin{equation*}
Q_{a}(v, w){:=}\frac{1}{2}\mathcal{P}({\rm{div}}(v\otimes w)+{\rm{div}}(w\otimes v)).
\end{equation*}
Next, by using the identity
\begin{equation}
\nabla\times(\mathnormal w\times\mathnormal v)=\mathnormal v\cdot\nabla\mathnormal w-\mathnormal w\cdot\nabla\mathnormal v\label{1.10},
\end{equation}
one  can rewrite Hall term as
\begin{equation*}
\nabla\times(J\times B)=B\cdot\nabla J-J\cdot\nabla B.
\end{equation*}
Hence, combining with \eqref{1.7}, equation \eqref{1.3} recasts in
\begin{equation*}
\partial_t B-\nu\Delta B=Q_{b}(B, h J-u),
\end{equation*}
where
$$Q_{b}(v, w){:=}{\rm{div}}(v\otimes w)-{\rm{div}}(w\otimes v)=w\cdot\nabla v-v\cdot\nabla w,$$
and the equation for $J$ may thus be written
$$\partial_t J-h\Delta J=\nabla\times Q_{b}({\rm{curl}}^{-1}J, h J-u).$$
Altogether,  we conclude that the   extended Hall-MHD system  \eqref{eq:HMHD-ext} recasts in 
\begin{equation} \label{eq:HMHD-ext2} \quad\left\{
\begin{aligned}
 &\partial_t u-\mu\Delta u=Q_{a}(B, B)-Q_{a}(u, u),\\
&\partial_t B-\nu\Delta B=Q_{b}(B, h J-u),\\
&\partial_t J-\nu\Delta J=\nabla\times Q_{b}({\rm{curl}^{-1}}J, h J-u),\\
&(\mathnormal u(0,\mathnormal x), \mathnormal B(0,\mathnormal x), \mathnormal J(0,\mathnormal x))=(\mathnormal u_{0}, \mathnormal B_{0}, J_{0}).
\end{aligned}
\right.\end{equation}
Set $U:=(U_1,U_2,U_3)$ with 
 $U_1:=u,$ $U_2:=B$ and $U_3:=J.$ Then,    \eqref{eq:HMHD-ext2} may be shortened into: 
\begin{equation}\label{eq:U}
 \left\{
\begin{aligned}
 &\partial_t U-\Delta_{\mu,\nu} U=Q(U, U),\\
 &U|_{t=0}=U_{0},
\end{aligned}
\right.\qquad\with \Delta_{\mu,\nu}U:=\left(\begin{array}{c}\mu \Delta u\\ \nu \Delta B\\ \nu \Delta J\end{array}\right)
\end{equation}
and where $Q:\mathbb R^{3}\times\mathbb R^{3}\to\mathbb R^{9}$ is defined by
\begin{equation}\label{eq:Q} 
 %U{:=}  \left(\!\begin{array}{c} U^{1} \\ U^{2} \\ U^{3} \end{array}\!\right)\andf 
 Q(V, W){:=}\left(  \begin{array}{c}
         Q_{a}(V_{2}, W_{2})-Q_{a}(V_{1}, W_{1}) \\
         Q_{b}(V_{2}, h W_{3}-W_{1})\\
         \nabla\times Q_{b}({\rm{curl}}^{-1}V_{3}, h W_{3}-W_{1})
          \end{array}
          \right)\cdotp
\end{equation}
%Above we agree that the three  components of $U,$ $V$ and $W$ are valued in $\mathbb R^{3}.$ 
The gain of considering the above extended system rather than the initial one is
that it  is semi-linear, while the Hall-MHD system for $(u,B)$ is quasi-linear.
The quadratic terms in the first two lines of \eqref{eq:U} are essentially 
the same as for  the incompressible Navier-Stokes equation. 
Owing to the Hall-MHD term in  the last line however, one has  to go beyond the theory of
the generalized Navier-Stokes equations as presented in e.g. \cite[Chap. 5]{Ba11} 
since the differentiation is \emph{outside} instead of being inside the first variable of $Q_b$
(this actually prevents us from considering 
large $J_0$'s  and  to handle regularity in Besov spaces  $\dot{B}^{\frac{3}{p}-1}_{p, r}$ with $r>1$). 
\medbreak
% The Hall term makes the Hall-MHD system  more nonlinear than 
 %the usual MHD system which explains while, somehow, it is difficult 
 %to recover exactly the same results. 
 In the case\footnote{That is made in most mathematical papers devoted to the Hall-MHD system even though it is not physically motivated.}
  $\mu=\nu,$    there is a cancellation property
 that eliminates  the Hall term when performing an energy method, so that
 one can obtain better results. 
 In particular, one can  prove the local well-posedness for general large data  in 
 $\dot{B}_{2,1}^{\frac{1}{2}}$ and  the global well-posedness for small data in all  
 spaces 
 $\dot{B}_{2,r}^{\frac{1}{2}}$ with  $r\in[1,\infty].$ 
 \medbreak 
 In order to explain where that  cancellation comes from,  let us
   introduce the function $v:=u-h J$ 
 (that may be interpreted as the velocity of an electron, see \cite{Al11} page 5).
 Recall the following 
 % and to use another  extended formulation for the Hall-MHD system that is valid only if $\mu=\nu$.
 %To achieve it,  we need the  
 vector identities:
$$\nabla(w\cdot z)=(\nabla w)^T z+(\nabla z)^T w
\andf(\nabla w-(\nabla w)^T)z=(\nabla\times w)\times z,$$
where $(\nabla w)_{ij}:=\partial_jw^i$ for $1\leq i,j\leq 3.$
%$$ \nabla w=
 % \left(  \begin{matrix}
  % \partial_1w^1 & \partial_2w^1 & \partial_3w^1 \\
  % \partial_1w^2 & \partial_2w^2 & \partial_3w^2 \\
  % \partial_1w^3 & \partial_2w^3 & \partial_3w^3
 % \end{matrix} 
  % \right)\andf
   % (\nabla w)^T=
  %\left( \begin{matrix}
   %\partial_1w^1 & \partial_1w^2 & \partial_1w^3 \\
   %\partial_2w^1 & \partial_2w^2 & \partial_2w^3 \\
   %\partial_3w^1 & \partial_3w^2 & \partial_3w^3
  %\end{matrix} \right)\cdotp $$
  \medbreak
Hence, combining  with \eqref{1.10} yields
\begin{align}\nabla\times(w\times z)&=z\cdot\nabla w- w\cdot\nabla z\notag\\
&=(\nabla w-(\nabla w)^T)z+(\nabla z-(\nabla z)^T)w-2w\cdot\nabla z+\nabla(w\cdot z)\notag\\
&=(\nabla\times w)\times z+(\nabla\times z)\times w-2w\cdot\nabla z+\nabla(w\cdot z)\label{1.1300}.
\end{align}
  Then, applying Identity \eqref{1.1300} to  the term
$\nabla\times(v\times B),$
equation \eqref{1.3} turns into
\begin{equation*}
\partial_t B-\mu\Delta B=(\nabla\times v)\times B+J\times u-2v\cdot\nabla B+\nabla(v\cdot B).
%\with v:=u-h J.
\end{equation*}
Taking   $h\cdot{\rm{curl}}$  of the above equation, and subtracting  it from \eqref{eq:u1},  we  get
$$\displaylines{
\quad\partial_tv-\mu\Delta v
=B\cdot\nabla B-u\cdot\nabla u-h\nabla\times((\nabla\times v)\times B)
\hfill\cr\hfill+h\nabla\times(u\times J)
+2h\nabla\times(v\cdot\nabla B)-\nabla Q.\quad}$$
Therefore,  since $hJ=v-u,$  in terms of unknowns $(u,B,v),$ the extended 
  Hall-MHD system reads 
\begin{equation}\label{1.100}
\left\{
\begin{aligned}
&\partial_t u-\mu\Delta u=B\cdot\nabla B-u\cdot\nabla u-\nabla Q,\\
&\div u=0,\\
&\partial_t{\mathnormal B}-\mu\Delta B=\nabla\times(v\times B),\\
&\partial_tv-\mu\Delta v=B\cdot\nabla B-u\cdot\nabla u-h\nabla\times((\nabla\times v)\times B)\\
&\hspace{4cm}+\nabla\times(v\times u)+2h\nabla\times(v\cdot\nabla B)-\nabla Q.\\
\end{aligned}
\right.
\end{equation}
 %owing to \begin{equation*}(\nabla\times w, v)_{L^2}=(w, \nabla\times v)_{L^2},\end{equation*}
 The only quasilinear  term cancels out when performing an energy method, since
\begin{equation}\label{1.1100}
(\nabla\times((\nabla\times v)\times B), v)_{L^2}=0.
\end{equation}
After localization of the system by means of the Littlewood-Paley  spectral 
cut-off operators $\dot \Delta_j$ defined in the Appendix, the above 
identity still holds, up to some lower order commutator term.
This will enable us to prove the following 
local  well-posedness result \emph{for large data} in the critical Besov space $\dot B^{\frac12}_{2,1},$ 
together with blow-up criteria involving critical norms. 
\begin{theorem}\label{Th_2} Assume that $\mu=\nu.$
For any initial data $(u_0, B_0)$ in ${\dot B^{\frac{1}{2}}_{2, 1}}$  with $\div u_0=\div B_0=0$ 
and $J_0:=\nabla\times B_0\in{\dot B^{\frac{1}{2}}_{2, 1}},$ 
there exists a positive time $T$ such that the Cauchy problem \eqref{1.1}-\eqref{1.4} has a unique solution $(u, B)\in E_{2}(T)$ with  $J:=\nabla\times B\in E_{2}(T)$. Moreover, if the maximal time of existence $T^*$ of that solution is finite, then 
\begin{eqnarray}\label{blowup1}
&&\int_0^{T^*}\|(u, B, \nabla B)(t)\|_{L^\infty}^2%+\|\nabla B(t)\|_{\dot{B}^{\frac{3}{2}}_{2, \infty}}^2
\,dt =\infty\\\label{blowup2}
&&\int_0^{T^*}\|(u, B, \nabla B)(t)\|_{\dot B^{\frac{5}{2}}_{2, 1}}\,dt =\infty
\end{eqnarray}
and, for any $\rho\in(2, \infty),$
\begin{equation}\label{blowup3}
\int_0^{T^*}\|(u, B, \nabla B)(t)\|_{\dot B^{\frac{2}{\rho}-1}_{\infty, \infty}}^\rho\,dt =\infty.
\end{equation}
\end{theorem}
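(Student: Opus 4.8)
The plan is to work with the reformulation \eqref{1.100} of the system in the unknowns $(u,B,v)$, where $v:=u-hJ$ is again divergence free since $\div J=\div(\nabla\times B)=0$, and where, \emph{precisely because} $\mu=\nu$, all three evolution equations carry the common diffusion $\mu\Delta$. Since the last line of \eqref{1.100} contains the quasilinear term $h\nabla\times((\nabla\times v)\times B)$, a direct contraction in $E_2(T)$ is not available; I would instead construct the solution by uniform a priori estimates followed by a compactness/convergence argument. Concretely, I would first set up a Friedrichs-type approximation scheme, truncating the spectrum to the annuli $\{n^{-1}\le|\xi|\le n\}$, which preserves both the divergence-free constraint and the algebraic structure underlying \eqref{1.1100}; the truncated system is a globally Lipschitz ODE on band-limited $L^2$ functions and hence produces smooth-in-time approximations $U^n=(u^n,B^n,v^n)$.

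The heart of the matter is the localized energy estimate. Applying $\ddj$ to each line of \eqref{1.100}, taking the $L^2$ inner product with $\ddj u$, $\ddj B$, $\ddj v$, and using Bernstein's inequality to convert $\mu\|\nabla\ddj w\|_{L^2}^2$ into $c\mu 2^{2j}\|\ddj w\|_{L^2}^2$, I obtain for each component a bound of the form $\tfrac12\tfrac{d}{dt}\|\ddj w\|_{L^2}^2+c\mu 2^{2j}\|\ddj w\|_{L^2}^2\le(\text{nonlinear})\cdot\|\ddj w\|_{L^2}$. The pressure drops out after applying the Leray projector, legitimate since $u$ and $v$ are both divergence free, and the semilinear quadratic terms $u\cdot\nabla u$, $B\cdot\nabla B$, $\nabla\times(v\times B)$, $\nabla\times(v\times u)$, $\nabla\times(v\cdot\nabla B)$ are handled by the usual product and commutator estimates in Besov spaces. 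The single genuinely dangerous contribution is the quasilinear Hall term, and here I would exploit the exact pointwise identity $\big((\nabla\times\ddj v)\times B\big)\cdot(\nabla\times\ddj v)\equiv0$, the localized form of \eqref{1.1100}, which gives $\big(\nabla\times((\nabla\times\ddj v)\times B),\ddj v\big)_{L^2}=0$. Thus only the commutator $[\ddj,\,\cdot\times B](\nabla\times v)$ survives; integrating the external $\nabla\times$ by parts onto $\ddj v$ and invoking a standard commutator estimate, which gains exactly the one derivative lost to $\nabla\times$, reduces it to a tame term controlled by $\dot B^{5/2}_{2,1}\times\dot B^{1/2}_{2,1}$ norms. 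Multiplying by $2^{j/2}$ and summing over $j$ yields a closed differential inequality for $X(t):=\|(u,B,v)(t)\|_{\dot B^{1/2}_{2,1}}$; a standard bootstrap then furnishes a time $T>0$, depending only on $\|(u_0,B_0,J_0)\|_{\dot B^{1/2}_{2,1}}$, on which $X$ stays bounded and $\int_0^T\|(u,B,v)\|_{\dot B^{5/2}_{2,1}}\,d\tau<\infty$, uniformly in $n$.

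With these uniform bounds I would pass to the limit by compactness, the uniform $L^1_T(\dot B^{5/2}_{2,1})$ control yielding time-equicontinuity through the equation, thereby recovering $(u,B,v)\in E_2(T)$ with $J=h^{-1}(u-v)$; returning to $(u,B)$ solves \eqref{1.1}--\eqref{1.4}. For uniqueness I would estimate the difference of two solutions one derivative below the existence level, in $\dot B^{-1/2}_{2,1}$, where every quadratic product is tame (in dimension three $\dot B^{1/2}_{2,1}\cdot\dot B^{1/2}_{2,1}\hookrightarrow\dot B^{-1/2}_{2,1}$) and where the quasilinear term once more reduces, via the same cancellation and commutator, to a controllable contribution; Gronwall's lemma then forces the difference to vanish.

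Finally, the three blow-up criteria follow by re-running the energy estimate and arguing against the maximality of $T^*$. Assuming the relevant integral finite, I would bound the nonlinearity by the indicated norm times the critical norm, using the interpolation $\|w\|_{\dot B^{3/2}_{2,1}}\lesssim\|w\|_{\dot B^{1/2}_{2,1}}^{1/2}\|w\|_{\dot B^{5/2}_{2,1}}^{1/2}$ together with the algebra estimate $\|fg\|_{\dot B^{3/2}_{2,1}}\lesssim\|f\|_{L^\infty}\|g\|_{\dot B^{3/2}_{2,1}}+\|g\|_{L^\infty}\|f\|_{\dot B^{3/2}_{2,1}}$. For \eqref{blowup1} Young's inequality absorbs half of $\|\cdot\|_{\dot B^{5/2}_{2,1}}$ into the diffusive term, leaving the weight $\|(u,B,\nabla B)\|_{L^\infty}^2$ (the power two and the $L^2$-in-time being dictated by this interpolation, and $\nabla B$ entering because $v=u-hJ$ couples to $\nabla B$ through $J$); \eqref{blowup2} follows from the same product-and-interpolation bounds, now keeping the assumed-integrable $\|(u,B,\nabla B)\|_{\dot B^{5/2}_{2,1}}$ as the Gronwall weight rather than absorbing it. The criterion \eqref{blowup3} is the most delicate, being the scaling-critical Serrin endpoint (note $\rho\big(1+(\tfrac2\rho-1)\big)=2$): closing it requires estimating the quadratic terms against the negative-regularity norm $\dot B^{\frac2\rho-1}_{\infty,\infty}$ by a careful paraproduct decomposition, absorbing the surplus derivatives into $c\mu\|\cdot\|_{\dot B^{5/2}_{2,1}}$ via Young's inequality with exponent $\rho$. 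In each case Gronwall's lemma then bounds $\|(u,B,v)\|_{\dot B^{1/2}_{2,1}}$ up to $T^*$, contradicting maximality. Throughout, the quasilinear Hall term, tamed by the cancellation \eqref{1.1100}, is the recurring main obstacle.
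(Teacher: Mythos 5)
Your overall architecture --- Friedrichs truncation to the annuli $\{n^{-1}\leq|\xi|\leq n\}$, localized energy estimates in which the quasilinear Hall term is split into a commutator plus a term annihilated by the pointwise identity underlying \eqref{1.1100}, compactness to pass to the limit, Gronwall for uniqueness, and re-running the estimates against each assumed-finite integral for the blow-up criteria --- is exactly the paper's. But there is one genuine gap, and it sits at the heart of the large-data local existence claim. Your ``closed differential inequality'' for $X(t):=\|(u,B,v)(t)\|_{\dot B^{\frac12}_{2,1}}$ has the schematic form $X'+C_1D\leq C\,X\,D+\ldots$ with $D$ the $\dot B^{\frac52}_{2,1}$ dissipation norm, and such an inequality can only be bootstrapped when $2CX\leq C_1$, i.e.\ for \emph{small} data: the quadratic term is exactly of the size of the dissipation, so shrinking $T$ does not make it relatively small, and nothing closes for large $X(0)$. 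Worse, the conclusion you assert --- a lifespan $T$ depending only on $\|(u_0,B_0,J_0)\|_{\dot B^{\frac12}_{2,1}}$ --- is impossible in a scaling-critical space: by the invariance of type \eqref{eq:solutionNS} for the extended system, the data $\lambda u_0(\lambda\cdot)$, $B_0(\lambda\cdot)$, $\lambda J_0(\lambda\cdot)$ have the same critical norms while the lifespan scales like $\lambda^{-2}T^*$, hence can be made arbitrarily small at fixed norm.

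The missing idea is the free-flow splitting of Proposition \ref{P_41}: write $(u,B,v)=(u^L,B^L,v^L)+(\wt u,\wt B,\wt v)$ with $z^L:=e^{t\Delta}z_0$, and run your localized energy method on the system \eqref{4.1} for the tilde unknowns, which have \emph{zero} initial data. The dangerous interactions then involve at least one free factor, and the resulting inequality \eqref{4.6} carries source coefficients $c_1,c_2$ built from the heat flow whose weighted integral in \eqref{eq:X2} tends to $0$ as $T\to0$ by dominated convergence --- this is precisely where the third index $r=1$ enters, and it is this smallness (of the free evolution over $[0,T]$, not of the data norm) that lets the bootstrap close and defines $T$, which therefore depends on the data profile and not merely on its norm. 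With this repair the rest of your proposal is sound and matches the paper: the localized cancellation and the commutator bound are \eqref{com01}, and your blow-up arguments coincide with the paper's sixth step (tame estimate \eqref{eq:tame1} plus interpolation and Young for \eqref{blowup1}, keeping the $\dot B^{\frac52}_{2,1}$ norm as Gronwall weight for \eqref{blowup2}, and \eqref{prod2} together with \eqref{com2} for the endpoint criterion \eqref{blowup3}). Your uniqueness with a one-derivative loss in $\dot B^{-\frac12}_{2,1}$ is viable --- it is essentially what the paper does in Section \ref{s:r} for general $r$, where the loss is unavoidable --- but note that for $r=1$ the paper proves uniqueness at the native level $\dot B^{\frac12}_{2,1}$, exploiting that $\|v_1\|_{\dot B^{\frac52}_{2,1}}$ belongs to $L^1(0,T)$ and may be placed in the Gronwall weight, so the downshift, while correct, is not needed here.
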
  
Still for  $\mu=\nu,$   one can prove well-posedness 
in \emph{any} critical space $\dot B^{\frac12}_{2,r}$ with $r\in[1,\infty].$ 
Then,  the components of the solution will belong to the 
following space\footnote{The reader may refer to Definition \ref{d:tilde}
for the definition of `tilde spaces'}:
$$E_{2,r}(T){:=}\Big\{v\in \wt\cC_T(\dot B^{\frac12}_{2, r}),~\nabla^2 v\in  \wt L^1_T(\dot B^{\frac12}_{2, r})\andf\div v=0\Big\},$$
where the letter $T$ is omitted if the time interval is $\R_+.$
\begin{theorem}\label{Th_2bis} Assume that $\mu=\nu.$
Consider  initial data $(u_0, B_0)$ in ${\dot B^{\frac{1}{2}}_{2, r}}$  with $\div u_0=\div B_0=0$ 
and $J_0:=\nabla\times B_0\in{\dot B^{\frac{1}{2}}_{2, r}}$ for some $r\in[1,\infty].$ 
 Then, the following results hold true:
\begin{enumerate}
\item there exists a universal positive constant $c$ such that, if
\begin{equation}\label{eq:smallnessr} 
\|u_0\|_{\dot B^{\frac12}_{2, r}}+\|B_0\|_{\dot B^{\frac12}_{2,r}}+\|u_0-h J_0\|_{\dot B^{\frac12}_{2, r}}<c\mu,
\end{equation}
then the Hall-MHD system has a unique global solution $(u,B)$ with 
$(u,B,J)$ in $E_{2,r}.$
\item If only  $\|u_0-h J_0\|_{\dot B^{\frac12}_{2, r}}<c\mu,$  
then  there exists $T>0$
such that  the Hall-MHD system  has a unique solution $(u,B)$ on $[0,T],$ with $(u,B,J)$ in $E_{2,r}(T).$ 
\end{enumerate}
\end{theorem}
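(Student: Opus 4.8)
The plan is to work with the reformulation \eqref{1.100} in the unknowns $(u,B,v)$ with $v:=u-hJ$, since for $\mu=\nu$ this is precisely the formulation in which the only quasilinear term obeys the cancellation \eqref{1.1100}. I would set up a priori estimates in the tilde spaces $\wt L^\rho_T(\dot B^{\frac12}_{2,r})$ by localizing \eqref{1.100} with the operators $\ddj$ and running an $L^2$-type energy argument on each dyadic block. For the heat-smoothed linear part, applying $\ddj$, taking the $L^2$ inner product with $\ddj u$, $\ddj B$, $\ddj v$, and using the spectral localization gives the expected dissipative gain of two derivatives, i.e. control of $\nabla^2(u,B,v)$ in $\wt L^1_T(\dot B^{\frac12}_{2,r})$ in terms of $\|(u_0,B_0,v_0)\|_{\dot B^{\frac12}_{2,r}}$ plus the nonlinear contributions. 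Since $\frac12 = \frac3p-1$ at $p=2$, the index $\dot B^{\frac12}_{2,r}$ is critical for Navier-Stokes scaling and the standard product and commutator estimates in this space are at my disposal.

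The key point, and what makes the case $r>1$ tractable here but not in Theorem \ref{Th_1}, is the cancellation \eqref{1.1100}: after localization, the term $h\,\ddj\bigl(\nabla\times((\nabla\times v)\times B)\bigr)$ tested against $\ddj v$ does not produce a genuine loss of derivative but only a commutator of lower order. I would therefore write $\bigl(\ddj(\nabla\times((\nabla\times v)\times B)),\ddj v\bigr)_{L^2}$ as the exact cancelling term \eqref{1.1100} for the localized fields plus a commutator $\bigl[\ddj,B\cdot\nabla\bigr]$-type remainder, and bound that remainder by the usual commutator estimates so that the top-order derivative on $v$ is absorbed. The remaining quadratic terms $B\cdot\nabla B$, $u\cdot\nabla u$, $\nabla\times(v\times B)$, $\nabla\times(v\times u)$ and $2h\nabla\times(v\cdot\nabla B)$ are handled by paraproduct and remainder decompositions together with the continuity of ${\rm curl}^{-1}$ as a $(-1)$-order multiplier; each is controlled by a product of a $\wt L^\infty_T(\dot B^{\frac12}_{2,r})$ norm and a $\wt L^1_T(\dot B^{\frac52}_{2,r})$ norm of the factors, which is exactly the regularity furnished by $E_{2,r}(T)$. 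Summing the resulting block inequalities in $\ell^r$ (here the tilde-space structure is essential, as it allows the $\ell^r$ summation to commute with the time integration) yields a closed nonlinear estimate of the schematic form $X(T)\le C\|U_0\|_{\dot B^{\frac12}_{2,r}}+C\,X(T)^2$, where $X(T)$ is the $E_{2,r}(T)$ norm of $(u,B,v)$.

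From this inequality the two assertions follow by now-standard arguments. For part (1), the smallness assumption \eqref{eq:smallnessr}, which controls exactly $\|u_0\|+\|B_0\|+\|v_0\|$ in $\dot B^{\frac12}_{2,r}$ since $v_0=u_0-hJ_0$, lets me bootstrap $X(T)$ below the threshold $1/(2C)$ uniformly in $T$, giving a global solution; existence and uniqueness are obtained through the standard Friedrichs approximation scheme and the corresponding estimate for differences of solutions. For part (2), only $\|v_0\|_{\dot B^{\frac12}_{2,r}}$ is small, so I would split the low and high frequencies of $(u_0,B_0)$: the high frequencies are small in $\dot B^{\frac12}_{2,r}$ and the low frequencies, though possibly large, can be made to contribute only a small amount to the nonlinear estimate on a sufficiently short time interval $[0,T]$ by the strong continuity of the heat flow in the tilde spaces. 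This produces the small local quantity needed to close the fixed-point argument, yielding local existence and uniqueness in $E_{2,r}(T)$. I expect the main obstacle to be the rigorous treatment of the commutator term arising from \eqref{1.1100} at the endpoint regularity and, for $r=\infty$, the loss of strong time-continuity, which forces me to state the result in the tilde spaces $\wt\cC_T$ rather than $\cC_T$ and to argue uniqueness with some care.
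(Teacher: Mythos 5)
Your setup---working with the $(u,B,v)$ formulation \eqref{1.100}, running dyadic $L^2$ energy estimates that exploit the cancellation \eqref{1.1100} modulo a commutator, summing in $\ell^r$ within the tilde spaces, and closing an inequality of the form \eqref{5.00a}---is exactly the paper's Proposition \ref{P_5a}, and your bootstrap for part (1) matches the paper's. The genuine gap is your uniqueness argument for $r>1$. You propose to conclude via ``the corresponding estimate for differences of solutions,'' i.e.\ a stability estimate in $E_{2,r}(T)$ itself, and you locate the only delicate point at $r=\infty$ (loss of time continuity). But the difference system \eqref{4.d1} contains the term $\nabla\times((\nabla\times v_1)\times\dB)$, in which both derivatives fall on $v_1$ and no commutator cancellation is available (the cancellation only helps for $\nabla\times((\nabla\times\dv)\times B_2)$ tested against $\dv$); to bound it in $\wt L^1_T(\dot B^{\frac12}_{2,r})$ by the $E_{2,r}(T)$ norms of $v_1$ and $\dB$ one would need something like $\dot B^{\frac32}_{2,r}\hookrightarrow L^\infty$, which fails for \emph{every} $r>1$, not just $r=\infty$. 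This is precisely why the paper abandons stability in $E_{2,r}(T)$ and proves uniqueness with a loss of one derivative, in $F_{2,r}(T):=\wt L^\infty_T(\dot B^{-\frac12}_{2,r})$: one must first verify that the source terms $R_1,\dots,R_5$ lie in $\wt L^1_T(\dot B^{-\frac12}_{2,r})$ (via \eqref{5.p11}--\eqref{5.p22}), then rerun the localized energy argument one derivative lower with the commutator estimate \eqref{com2} (with $s=1/2$, $\rho=4$), and conclude through the continuity of the nondecreasing function $Y(t)$ vanishing at $0$ plus a connectivity argument rather than a plain Gronwall step. As written, your proposal stalls at this term.

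Two further points. For part (2) you split $(u_0,B_0)$ into low and high frequencies and claim the high frequencies are small in $\dot B^{\frac12}_{2,r}$; this is true for $r<\infty$ but false for $r=\infty$ (a borderline datum with $2^{j/2}\|\ddj u_0\|_{L^2}$ of order one for all $j$ has no small tail), while the theorem covers $r=\infty$. The paper instead subtracts the free heat evolution of $u_0$ and $B_0$ only, keeping $v$ whole, and exploits that $\|(u^L,B^L)\|_{\wt L_t^{4/3}(\dot B^2_{2,r})}+\|(u^L,B^L)\|_{\wt L_t^{4}(\dot B^1_{2,r})}\to0$ as $t\to0$, which feeds directly into the quadratic estimate and closes the argument for small $t$. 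Finally, for the construction of solutions in part (1), your direct Friedrichs scheme is a workable variant of Section \ref{s:new}; the paper instead regularizes the data, solves with the $r=1$ theory of Theorem \ref{Th_2}, and then---a step your outline has no substitute for---invokes the blow-up criterion \eqref{blowup3}, combined with the uniform $E_{2,r}$ bound and the embedding of $\wt L^4_t(\dot B^1_{2,r})$ into $L^4_t(\dot B^{-\frac12}_{\infty,\infty})$, to guarantee that the approximate solutions are global before passing to the limit.
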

\begin{remark} Observe that for $h=0,$ one recovers the statement of \cite{Mi09}
pertaining to  the classical incompressible MHD system.
\end{remark}
The rest of the paper unfolds as follows. 
The next section is devoted to the proof of Theorem \ref{Th_1}. 
%our general global-in-time result for small data in critical Besov spaces, and of the conservation of higher %regularity, and of the corresponding local statement if only $J_0$ is small. 
In Section \ref{s:new}, we focus on the case $\mu=\nu$ and 
prove Theorem \ref{Th_2}  by taking advantage of the cancellation 
property pointed out above.  The proof of  Theorem \ref{Th_2bis} is carried out in
Section \ref{s:r}.  
For the reader's convenience, results concerning 
Besov spaces, Littlewood-Paley decomposition and commutator estimates are recalled in Appendix.

%%%%%%%%%%%%%%%%%%%%%%%%%%%%%%%%%%%%%%%%%%%%%%%

\section{Well-posedness in general critical Besov spaces with third index 1}\label{s:small}

The present section is dedicated to proving  Th. \ref{Th_1}.
Before starting, a fundamental observation (that will be also used
in the next sections) is in order:
the triplet $(u,B,P)$ satisfies the Hall-MHD system \eqref{1.1}-\eqref{1.3} with coefficients
$(\mu,\nu,h)$ if and only if  the \emph{rescaled} triplet:
\begin{equation}\label{eq:rescaling}
(\wt u,\wt  B,\wt P)(t,x):=\frac{h}{\mu}\Bigl(u,B, \frac{h}{\mu} P\Bigr)\Bigl(\frac{h^2}\mu t, hx\Bigr)
\end{equation}
satisfies  \eqref{1.1}-\eqref{1.3} with coefficients $(1,1,\nu/\mu).$
\medbreak
Consequently, taking advantage of the scaling invariance of the homogeneous
Besov norms (see Proposition \ref{P_25} ${\rm{(vi)}}$), it is enough 
to prove the statement in the case where the viscosity $\mu$
and the Hall number $h$ are equal to $1.$

For expository purpose, 
we shall assume in addition that the magnetic resistivity $\nu$
is  equal to $1$ (to achieve the general case it is only a matter 
of changing the heat semi-group in the definition of  $\cB$ in \eqref{3.1} below accordingly).
\medbreak
Throughout this section and the following ones, we shall repeatedly  use the fact that, 
as a consequence of Proposition \ref{P_25} ${\rm{(vii)}},$  one has the following
equivalence of norms for all $s\in\R$ and $(p,r)\in[1,+\infty]^2$: 
\begin{equation}\label{eq:equivnorm}
\|\nabla B\|_{\dot{B}_{p, r}^{s}}\sim \|J\|_{\dot{B}_{p, r}^{s}}\andf
\|\nabla B\|_{\dot{H}^{s}}=\|J\|_{\dot{H}^{s}}.\end{equation}

In order to establish the global existence
of a solution of the Hall-MHD system  in the case of small data, we shall first prove 
the corresponding result for the extended system \eqref{eq:U}. It relies  on   the following 
well known corollary of the  fixed point theorem  in
 complete metric  spaces.
 \begin{lemma}\label{Le_31}
Let $(X, \|\cdot\|_{X})$ be a Banach space and $\mathcal B : X\times X\to X,$  a bilinear continuous operator with norm $K$. Then, for all $y\in X$ such that $4K\|y\|_{X}<1$, equation 
$$x=y+\mathcal B(x,x)$$ has a unique solution $x$ in the ball $\mathit B(0, \frac{1}{2K})$. Besides, $x$ satisfies $\|x\|_{X}\leq2\|y\|_{X}$.
\end{lemma}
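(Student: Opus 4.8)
The plan is to recast the equation as a fixed point problem for the map $\Phi(x):=y+\mathcal B(x,x)$ and to apply Banach's fixed point theorem on a well-chosen closed ball. Writing $K$ for the operator norm of $\mathcal B$, so that $\|\mathcal B(v,w)\|_X\leq K\|v\|_X\|w\|_X$ for all $v,w\in X$, the whole computation rests on the elementary bilinear identity
$$\mathcal B(x,x)-\mathcal B(x',x')=\mathcal B(x-x',x)+\mathcal B(x',x-x'),$$
which converts quadratic differences into linear ones.

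First I would check that $\Phi$ maps the closed ball $\ov{\mathit B}(0,2\|y\|_X)$ into itself and is a contraction there. Stability follows from $\|\Phi(x)\|_X\leq\|y\|_X+K\|x\|_X^2\leq\|y\|_X+4K\|y\|_X^2$ together with the smallness assumption $4K\|y\|_X<1$, which gives $4K\|y\|_X^2\leq\|y\|_X$ and hence $\|\Phi(x)\|_X\leq2\|y\|_X$. The contraction property follows from the displayed identity: for $x,x'$ in that ball, $\|\Phi(x)-\Phi(x')\|_X\leq K(\|x\|_X+\|x'\|_X)\|x-x'\|_X\leq 4K\|y\|_X\,\|x-x'\|_X$, with Lipschitz factor $4K\|y\|_X<1$. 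Since $\ov{\mathit B}(0,2\|y\|_X)$ is a complete metric space, Banach's theorem yields a unique fixed point $x$ in it, which by construction satisfies $\|x\|_X\leq2\|y\|_X$. As $2\|y\|_X<\frac{1}{2K}$, this ball sits inside $\mathit B(0,\frac1{2K})$, so $x$ is a solution there as well.

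It then remains to upgrade uniqueness from $\ov{\mathit B}(0,2\|y\|_X)$ to the larger ball $\mathit B(0,\frac1{2K})$, which I would do by a direct difference estimate. If $x_1,x_2$ are two solutions with $\|x_i\|_X\leq\frac1{2K}$, one first observes that in fact $\|x_i\|_X<\frac1{2K}$: indeed any solution obeys $\|x_i\|_X\leq\|y\|_X+K\|x_i\|_X^2$, and the value $\|x_i\|_X=\frac1{2K}$ would force $\|y\|_X\geq\frac{1}{4K}$, contradicting $4K\|y\|_X<1$. Subtracting the two equations and invoking the bilinear identity then gives $\|x_1-x_2\|_X\leq K(\|x_1\|_X+\|x_2\|_X)\|x_1-x_2\|_X$ with $K(\|x_1\|_X+\|x_2\|_X)<1$, whence $\|x_1-x_2\|_X=0$, i.e. $x_1=x_2$.

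There is essentially no serious obstacle here, the statement being the standard Picard lemma. The one point deserving care is the choice of radius: on the ball of radius exactly $\frac1{2K}$ the Lipschitz constant of $\Phi$ degenerates to $1$, so the contraction argument cannot be run there directly. Working instead on the strictly smaller ball $\ov{\mathit B}(0,2\|y\|_X)$ restores a genuine contraction and simultaneously delivers the a priori bound $\|x\|_X\leq2\|y\|_X$, while uniqueness on the full ball $\mathit B(0,\frac1{2K})$ is recovered separately through the elementary difference estimate above.
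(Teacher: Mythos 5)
Your proof is correct and is precisely the standard Picard-type argument that the paper invokes: the lemma is stated there without proof, as a well-known corollary of the Banach fixed point theorem, and your contraction on the ball $\ov{\mathit B}(0,2\|y\|_X)$ plus the separate difference estimate for uniqueness on $\mathit B\bigl(0,\tfrac1{2K}\bigr)$ is exactly the classical route. The attention you pay to the degeneracy of the Lipschitz constant at radius $\tfrac1{2K}$ and to excluding solutions of norm exactly $\tfrac1{2K}$ is sound and complete.
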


We shall take for $X$ the set of triplets of (time dependent) divergence free vector-fields 
with components in $E_p$
%$\cC_b(\R_+;\dot B^{\frac3p-1}_{p,1})\cap L_1(\R_+;\dot B^{\frac 3p+1}_{p,1}),$
endowed with the norm  
\begin{equation*}
\|V\|_{X}:~=\|V\|_{{L}^{1}(\dot B^{\frac{3}{p}+1}_{p, 1})}+\|V\|_{{L}^{\infty}(\dot B^{\frac{3}{p}-1}_{p, 1})}.
\end{equation*}
Let $(e^{t\Delta})_{t\geq0}$ denote the heat semi-group defined in \eqref{eq:heat}. 
We set~ $y:~t\mapsto e^{t\Delta}U_{0}$ and define the bilinear functional $\mathcal B$ by the formula
\begin{equation}
\mathcal B(V, W)(t)=\int_{0}^{t}e^{(t-\tau)\Delta}Q(V, W)~d\tau\label{3.1}.
\end{equation}
By virtue of \eqref{2.1}, System \eqref{eq:U}  recasts in 
\begin{equation}
U(t)= y(t)+\mathcal B(U, U)(t)\label{3.2}.
\end{equation}
%\noindent Recall that any solution satisfying \eqref{3.2} is called a $\mathbf {mild}$ solution of system $(G)$.

In order to apply  Lemma \ref{Le_31}, it suffices to show that  $y$ is small 
in $X,$ and that $\mathcal B$ maps $X\times X$ to $X.$ 
The former property holds true if Condition  \eqref{eq:smallness} is fulfilled for a small enough $c>0,$ as  Proposition~\ref{Le_27}  ensures that $y$ belongs to $X$ and that 
$$\|y\|_{X}\leq C\|U_{0}\|_{\dot B^{\frac{3}{p}-1}_{p, 1}}.$$
 
In order to prove the latter property, one can use the fact that, by virtue of Identity \eqref{1.7}, 
Proposition \ref{P_25} (i), (iii), (vii),  and 
Inequality \eqref{prod1},  we have
\begin{align}
\|{\rm{div}}(v\otimes w)\|_{\dot B^{\frac{3}{p}-1}_{p, 1}}&\lesssim\|v\otimes w\|_{\dot B^{\frac{3}{p}}_{p, 1}}\notag\\
&\lesssim\|v\|_{\dot B^{\frac{3}{p}}_{p, 1}}\|w\|_{\dot B^{\frac{3}{p}}_{p, 1}},\label{3.5}
\end{align}
\begin{align}
\|{\rm{div}}(({\rm{curl}}^{-1}v)\otimes w)\|_{\dot B^{\frac{3}{p}}_{p, 1}}&=\|w\cdot\nabla ({\rm{curl}}^{-1}v)\|_{\dot B^{\frac{3}{p}}_{p, 1}}\notag\\
&\lesssim\|\nabla {\rm{curl}}^{-1}v\|_{\dot B^{\frac{3}{p}}_{p, 1}}\|w\|_{\dot B^{\frac{3}{p}}_{p, 1}}\notag\\
&\lesssim\|v\|_{\dot B^{\frac{3}{p}}_{p, 1}}\|w\|_{\dot B^{\frac{3}{p}}_{p, 1}}\notag\\
&\lesssim\|v\|_{\dot B^{\frac{3}{p}-1}_{p, 1}}^{\frac12}\|w\|_{\dot B^{\frac{3}{p}-1}_{p, 1}}^{\frac12}\|v\|_{\dot B^{\frac{3}{p}+1}_{p, 1}}^{\frac12}\|w\|_{\dot B^{\frac{3}{p}+1}_{p, 1}}^{\frac12},\label{3.6}
\end{align}
and, since ${\rm{div}}({\rm{curl}^{-1}}v)=0$, owing to Proposition \ref{P_25} {\it (viii)},
\begin{align}
\|{\rm{div}}(w\otimes ({\rm{curl}}^{-1}v))\|_{\dot B^{\frac{3}{p}}_{p, 1}}&=\|({\rm{curl}}^{-1}v)\cdot\nabla w\|_{\dot B^{\frac{3}{p}}_{p, 1}}\notag\\
&\lesssim\|{\rm{curl}}^{-1}v\|_{\dot B^{\frac{3}{p}}_{p, 1}}\|\nabla w\|_{\dot B^{\frac{3}{p}}_{p, 1}}\notag\\
&\lesssim\|v\|_{\dot B^{\frac{3}{p}-1}_{p, 1}}\|w\|_{\dot B^{\frac{3}{p}+1}_{p, 1}}.\label{3.7}
\end{align}

Hence, integrating on $\R_+$ and observing that the Leray projector $\mathcal P$ 
maps $\dot B^{\frac3p}_{p,1}$ to itself  according to Proposition \ref{P_25} (vii),  we get
\begin{align}
\|Q_{a}(v, w)\|_{{L}^{1}(\dot B^{\frac{3}{p}-1}_{p, 1})}&\lesssim\|{\rm{div}}(v\otimes w)+{\rm{div}}(w\otimes v)\|_{{L}^{1}(\dot B^{\frac{3}{p}-1}_{p, 1})}\notag\\
&\lesssim\|v\|_{X}\|w\|_{X},\label{3.8}
\end{align}
\begin{align}
\|Q_{b}(v, w)\|_{{L}^{1}(\dot B^{\frac{3}{p}-1}_{p, 1})}&=\|{\rm{div}}(v\otimes w)-{\rm{div}}(w\otimes v)\|_{{L}^{1}(\dot B^{\frac{3}{p}-1}_{p, 1})}\notag\\
&\lesssim\|v\|_{X}\|w\|_{X},\label{3.9}
\end{align}
\begin{align}
\!\!\|\nabla\!\times\! Q_{b}({\rm{Curl}}^{-1}&v, w)\|_{{L}^{1}(\dot B^{\frac{3}{p}-1}_{p, 1})}
\lesssim\|Q_{b}({\rm{Curl}}^{-1}v, w)\|_{{L}^{1}(\dot B^{\frac{3}{p}}_{p, 1})}\notag\\
\lesssim&\, \|{\rm{div}}(({\rm{Curl}}^{-1}v)\otimes w)\|_{{L}^{1}(\dot B^{\frac{3}{p}}_{p, 1})}+\|{\rm{div}}(w\otimes ({\rm{Curl}}^{-1}v))\|_{{L}^{1}(\dot B^{\frac{3}{p}}_{p, 1})}\quad\notag\\
\lesssim&\,\|v\|_{X}\|w\|_{X}.\label{3.10}
\end{align}
Now,  by  definition of $\mathcal B(V, W)$, we have
$$ \quad\left\{
\begin{aligned}
 &\partial_{t}\mathcal B(V, W)-\Delta \mathcal B(V, W) = Q(V, W), \\
&\mathcal B(V, W)|_{t=0} =0.
\end{aligned}
\right.$$
Hence, by Proposition \ref{Le_27} and the definition of $Q$ in  \eqref{eq:Q}, we get
$$\displaylines{\|\mathcal B(V, W)\|_{X}
%&\lesssim\|Q(V, W)\|_{{L}^{1}(\dot B^{\frac{3}{p}-1}_{p, 1})}\\
\lesssim\| Q_{a}(V_{2}, W_{2})-Q_{a}(V_{1}, W_{1})\|_{{L}^{1}(\dot B^{\frac{3}{p}-1}_{p, 1})}
\hfill\cr\hfill+\|Q_{b}(V_{2}, W_{3}-W_{1})\|_{{L}^{1}(\dot B^{\frac{3}{p}-1}_{p, 1})}
+\|\nabla\times Q_{b}({\rm{curl}}^{-1}V_{3}, W_{3}-W_{1})\|_{{L}^{1}(\dot B^{\frac{3}{p}-1}_{p, 1})}.}$$
Remembering \eqref{3.8}-\eqref{3.10}, one  can conclude that $\mathcal B$
maps $X\times X$ to $X.$
Hence, System \eqref{eq:U} has a global solution $(u,B,J)$ in $X.$
\medbreak
For completing the proof of the global existence for the original Hall-MHD system, we  have to show that if $J_0=\nabla\times B_0,$ then   $J=\nabla\times B$
so that $(u,B)$ is indeed a distributional solution of \eqref{1.1}--\eqref{1.4}. 
Now, we have
$$(\partial_t-\Delta)(\nabla\times B-J)=\nabla\times Q_b({\rm{curl}}^{-1}(\nabla\times B-J),J-u).$$
Remember that $J-u$ belongs to $L^2(\dot B^{\frac 3p}_{p,1})$ (use interpolation for the space regularity
and H\"older inequality for the time variable),  that $J$ and $u$ are  in  $L^2_T(\dot B^{\frac 3p-1}_{p,1})$
for all $T>0$ since they are in $L^\infty(\dot B^{\frac 3p-1}_{p,1})$ and  observe 
that $\nabla\times B$ is in $L^2(\dot B^{\frac 3p-1}_{p,1}).$
Therefore, from the definition of $Q_b,$ the properties of continuity of operator $\curl^{-1},$ 
and product laws, we gather that 
$\nabla\times Q_b({\rm{curl}}^{-1}(\nabla\times B-J),J-u)$ is in $L^1_T(\dot B^{\frac 3p-2}_{p,1})$ for 
all $T>0.$ Because $(\nabla\times B-J)|_{t=0}=0,$  Proposition \ref{Le_27} thus guarantees
that $\nabla\times B-J$ is in $\cC([0,T];\dot B^{\frac 3p-2}_{p,1})$ for all $T>0.$
Furthermore, we have 
$$\displaylines{\|(\nabla\times B-J)(T)\|_{\dot B^{\frac3p-2}_{p,1}}
+\int_0^T\|\nabla\times B-J\|_{\dot B^{\frac3p}_{p,1}}\,dt\hfill\cr\hfill
\leq C\int_0^T\|J-u\|_{\dot B^{\frac 3p}_{p,1}}\|\nabla\times B- J\|_{\dot B^{\frac 3p-1}_{p,1}}\,dt.}$$
The right-hand side may be handled by means of an interpolation inequality: we get for all $\varepsilon>0,$
$$\|J-u\|_{\dot B^{\frac 3p}_{p,1}}\|\nabla\times B- J\|_{\dot B^{\frac 3p-1}_{p,1}}
\leq \eps\|\nabla\times B- J\|_{\dot B^{\frac 3p}_{p,1}}
+C\eps^{-1}\|J-u\|_{\dot B^{\frac 3p}_{p,1}}^2\|\nabla\times B- J\|_{\dot B^{\frac 3p-2}_{p,1}}.$$
Hence, taking $\eps$ small enough, then using   Gronwall lemma ensures that 
$\|(\nabla\times B- J)(t)\|_{\dot B^{\frac 3p-2}_{p,1}}=0$, whence
$\nabla\times B- J\equiv0$ a.e. on $\R_+\times\R^3.$ 
 This  yields the existence part of Theorem \ref{Th_1} in the small data case.

%\noindent This achieves the proof of existence of a global (mild) solution $U$ in $X$ for $(G)$ with \eqref{1.1200}. Moreover one can easily derive that $y,  {\mathcal B}(U, U)\in\mathcal C(\mathbb R_{+}, \dot B^{\frac{3}{p}-1}_{p, 1})$ by Lemma \ref{Le_27}, which insures that $U\in\mathcal C(\mathbb R_{+}, \dot B^{\frac{3}{p}-1}_{p, 1})$.
\medbreak
Let us explain how the above arguments have to be modified  so as to prove local
existence  in the case where only $J_0$ is small. 
The idea is to control the existence time  according to the solution
 $U^{L}$ of the heat equation:
$$ \quad\left\{
\begin{aligned}
 &\partial_{t}U^{L}-\Delta U^{L} = 0, \\
&U^{L}|_{t=0} = U_{0} .
\end{aligned}
\right.$$
By Proposition \ref{Le_27}, we have
\begin{equation}\label{3.12}
\|J^L\|_{{L}^{\infty}_{T}(\dot B^{\frac{3}{p}-1}_{p, 1})}\leq C\|J_{0}\|_{\dot B^{\frac{3}{p}-1}_{p, 1}},
\end{equation}
and, using also the dominated convergence theorem yields 
$$\lim\limits_{T\to 0}\|U^L\|_{{{L}^{\rho}_{T}(\dot B^{\frac{3}{p}+\frac{2}{\rho}-1}_{p, 1}})}=0,~~~\rm{whenever}~~1\leq\rho<\infty.$$
Clearly,  $U$ is a solution of \eqref{eq:U} on $[0, T]\times\mathbb R^3$ with data $U_0$ if and only if 
\begin{equation}\label{3.000}
U := U^L + \widetilde  U
\end{equation}
 with, for all  $t\in[0,T],$  
$$ 
 \widetilde U(t): = \int_0^t e^{(t-\tau)\Delta}(Q(\widetilde U, U^L) + Q(U^L, \widetilde U) + Q(\widetilde U, \widetilde U) + Q(U^L, U^L)) ~d\tau. 
$$
Then, proving local existence relies on  the following  generalization of  Lemma \ref{Le_31}.
\begin{lemma}\label{Le_220}
Let $(X, \|\cdot\|_{X})$ be a Banach space, $\mathcal B : X\times X\to X,$  a bilinear continuous operator with norm $K$ and $\mathcal L : X\to X$, a continuous linear operator with norm $M<1$. Let $y\in X$ satisfy  $4K\|y\|_{X}<(1-M)^2$. Then, equation $$x=y+{\mathcal L}(x)+\mathcal B(x,x)$$ has a unique solution $x$ in the ball $\mathit B(0, \frac{1-M}{2K})\cdotp$
\end{lemma}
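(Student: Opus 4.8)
The plan is to \emph{reduce} the statement to the already-established Lemma \ref{Le_31} by eliminating the linear term $\mathcal L$. Since $\|\mathcal L\|=M<1$, the operator ${\rm Id}-\mathcal L$ is boundedly invertible on $X$, its inverse being the Neumann series $\sum_{k\geq0}\mathcal L^k$, with $\|({\rm Id}-\mathcal L)^{-1}\|\leq(1-M)^{-1}.$ Hence, for every $x\in X,$ the equation $x=y+\mathcal L(x)+\mathcal B(x,x)$ is equivalent to $({\rm Id}-\mathcal L)x=y+\mathcal B(x,x),$ that is to say to
$$x=\wt y+\wt{\mathcal B}(x,x),\with \wt y:=({\rm Id}-\mathcal L)^{-1}y\andf\wt{\mathcal B}(v,w):=({\rm Id}-\mathcal L)^{-1}\mathcal B(v,w).$$
This is exactly the fixed point problem solved by Lemma \ref{Le_31}, now for the data $\wt y$ and the bilinear operator $\wt{\mathcal B}.$

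First I would record the elementary bounds $\|\wt y\|_X\leq(1-M)^{-1}\|y\|_X$ and $\wt K\leq(1-M)^{-1}K,$ where $\wt K$ denotes the norm of $\wt{\mathcal B}.$ The crucial step is then to verify the smallness assumption of Lemma \ref{Le_31}: combining these two bounds with the hypothesis $4K\|y\|_X<(1-M)^2$ gives
$$4\wt K\,\|\wt y\|_X\leq\frac{4K\|y\|_X}{(1-M)^2}<1,$$
so that Lemma \ref{Le_31} yields a unique solution $x$ in the ball $\mathit B(0,\frac1{2\wt K}),$ satisfying moreover $\|x\|_X\leq2\|\wt y\|_X.$

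It then remains to reconcile this with the ball announced in the statement, and this is where the precise constant $(1-M)^2$ is used a second time. On the one hand, $\wt K\leq(1-M)^{-1}K$ gives $\frac1{2\wt K}\geq\frac{1-M}{2K},$ so that $\mathit B(0,\frac{1-M}{2K})\subseteq \mathit B(0,\frac1{2\wt K});$ uniqueness therefore descends from the larger ball supplied by Lemma \ref{Le_31} to the smaller one claimed here. On the other hand, the quantitative bound shows
$$\|x\|_X\leq2\|\wt y\|_X\leq\frac{2\|y\|_X}{1-M}<\frac{1-M}{2K},$$
the last inequality being again a mere rewriting of $4K\|y\|_X<(1-M)^2;$ hence the solution produced genuinely lies in $\mathit B(0,\frac{1-M}{2K}),$ which completes the proof.

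The argument is essentially mechanical once the reduction to Lemma \ref{Le_31} is made, the only delicate point---and the sole place where the exact form of the smallness threshold matters---being the twofold use of the hypothesis: once to meet the condition $4\wt K\|\wt y\|_X<1$ and once to keep the constructed solution inside $\mathit B(0,\frac{1-M}{2K}).$ I anticipate no real obstacle beyond carefully tracking these constant comparisons.
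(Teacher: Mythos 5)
Your proof is correct. Note that the paper itself offers no proof of Lemma \ref{Le_220}: it is stated as a known generalization of Lemma \ref{Le_31}, the expected argument being the standard direct one, namely showing that $\Phi(x):=y+\mathcal L(x)+\mathcal B(x,x)$ is a contraction (with Lipschitz constant $M+2KR<1$) mapping into itself the closed ball whose radius $R$ is the smaller root of $KR^2-(1-M)R+\|y\|_X=0$, which is real precisely because $4K\|y\|_X<(1-M)^2$. Your route is genuinely different and arguably cleaner: by inverting ${\rm Id}-\mathcal L$ through the Neumann series you recycle Lemma \ref{Le_31} verbatim instead of redoing the fixed-point estimates, and the reduction makes transparent why the threshold is $(1-M)^2$ (one factor $(1-M)^{-1}$ is absorbed by $\wt y$, the other by $\wt{\mathcal B}$). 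The two delicate points are handled properly: uniqueness descends from $\mathit B(0,\frac1{2\wt K})$ to $\mathit B(0,\frac{1-M}{2K})$ because the two equations have identical solution sets and $\wt K\leq(1-M)^{-1}K$ gives the ball inclusion, while the bound $\|x\|_X\leq2\|\wt y\|_X\leq\frac{2\|y\|_X}{1-M}<\frac{1-M}{2K}$ places the constructed solution inside the announced ball. What the direct approach buys instead is the slightly sharper localization of the solution in the ball of radius $R_-\leq\frac{2\|y\|_X}{1-M}$ without invoking invertibility of ${\rm Id}-\mathcal L$; what yours buys is brevity and the explicit bound $\|x\|_X\leq 2(1-M)^{-1}\|y\|_X$, which is exactly what is used later in the local-existence argument where $\mathcal L$ has small norm for small $T$. (As in Lemma \ref{Le_31}, the degenerate case $K=0$ should be read with the convention that the ball is all of $X$.)
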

Take $\cB$ as in \eqref{3.1}, set $y:= \mathcal B(U^L, U^L)$ and define the linear map $\mathcal L$ by 
\begin{equation}\label{eq:L}
\mathcal L(V) := %\int_0^t e^{(t-\tau)\Delta}(Q(V, U^L) + Q(U^L, V))\,d\tau =
 \mathcal B(V, U^L)+\mathcal B(U^L, V).\end{equation}
Our problem recasts in
\begin{equation}\label{3.555}
\widetilde U = y + \mathcal L(\widetilde U) + \mathcal B(\widetilde U, \widetilde U).
\end{equation}
For $X,$ we now take the space (denoted by $X_T$) of  triplets of 
divergence free vector-fields with components in $E_p(T).$
%$\cC([0,T];\dot B^{\frac3p-1}_{p,1})\cap L_1(0,T;\dot B^{\frac 3p+1}_{p,1}),$
%endowed with the norm  
%\begin{equation*}
%\|\cdot\|_{X_T}:~=\|\cdot\|_{{L}^{1}_T(\dot B^{\frac{3}{p}+1}_{p, 1})}+\|\cdot\|_{{L}^{\infty}_T(\dot B^{\frac{3}{p}-1}_{p, 1})}.
%\end{equation*}
Then, arguing as for getting \eqref{3.5}, \eqref{3.6}, integrating on $[0,T]$ and using
Cauchy-Schwarz inequality, we get
$$\|\div(v\otimes w)\|_{{L}_T^{1}(\dot B^{\frac{3}{p}-1}_{p, 1})} 
+ \|\div(\curl^{-1}v\otimes w)\|_{{L}_T^{1}(\dot B^{\frac{3}{p}}_{p, 1})} 
\lesssim \|v\|_{L_T^2(\dot B^{\frac 3p}_{p,1})} \|w\|_{L_T^2(\dot B^{\frac 3p}_{p,1})}.
$$
Hence, using also \eqref{3.7} and the definition of $\cB(V,W),$ we end up with 
\begin{multline}\label{eq:BXT}
\|\mathcal B(V, W)\|_{X_T}
\lesssim\|V\|_{{L}^{2}_T(\dot B^{\frac{3}{p}}_{p, 1})}\|W\|_{{L}^{2}_T(\dot B^{\frac{3}{p}}_{p, 1})}\\+(\|W_1\|_{{L}^{1}_T(\dot B^{\frac{3}{p}+1}_{p, 1})}+\|W_3\|_{{L}^{1}_T(\dot B^{\frac{3}{p}+1}_{p, 1})})\|V_3\|_{{L}^{\infty}_T(\dot B^{\frac{3}{p}-1}_{p, 1})}.
\end{multline}
For justifying that $\cL$ defined in \eqref{eq:L} is indeed a continuous linear operator 
on $X_T$ \emph{with small norm if $T\to0,$} the troublemakers in the right-hand side of \eqref{eq:BXT} are 
\begin{equation*}
\|\widetilde u\|_{{L}^{1}_T(\dot B^{\frac{3}{p}+1}_{p, 1})}\|J^L\|_{{L}^{\infty}_T(\dot B^{\frac{3}{p}-1}_{p, 1})}\quad{\rm{and}}\quad\|\widetilde J\|_{{L}^{1}_T(\dot B^{\frac{3}{p}+1}_{p, 1})}\|J^L\|_{{L}^{\infty}_T(\dot B^{\frac{3}{p}-1}_{p, 1})}
\end{equation*}
since, for large $J_0,$  the term $\|J^L\|_{{L}^{\infty}_T(\dot B^{\frac{3}{p}-1}_{p, 1})}$ need 
not to be small.  One thus have to assume 
that  $\|J_0\|_{\dot B^{\frac{3}{p}-1}_{p, 1}}$  is small so as to guarantee that
the norm of $\cL$  is smaller than $1$  for $T$ small enough. Then,  
%Even when $p=2$, we still have a trouble with
%\begin{equation*}
%\|\widetilde u\|_{{L}^{1}_T(\dot B^{\frac{5}{2}}_{2, 1})}\|J^L\|_{\widetilde{L}^{\infty}_T(\dot B^{\frac{1}{2}}_{2, 1})}.
%\end{equation*}
  one can conclude thanks to Lemma \ref{Le_220},  to the local-in-time existence statement  of Theorem~\ref{Th_1}. 
\bigbreak
To prove the uniqueness part of Theorem \ref{Th_1}. 
Consider two solutions $(u^1,B^1)$ and $(u^2,B^2)$ of \eqref{1.1}--\eqref{1.3} emanating 
from  the same data, and denote by  $U^{1}$ and $U^2$ the corresponding solutions 
of the extended system \eqref{eq:U}.  Since one can take (with no loss of generality)  for  $U^{2}$ the solution built previously,  and as  $\|J_0\|_{\dot B^{\frac{3}{p}-1}_{p, 1}}\leq c$  is assumed, we have  
\begin{equation}\label{eq:J2}
\|J^2\|_{X_T}\leq 2c.
\end{equation}
 Denoting $\dU := U^{2} - U^{1}$, we find that $\dU$ satisfies
$$ \partial_t {\dU}-\Delta {\dU}=Q(U^2, \dU)+Q(\dU, U^{1})$$
 with $\dU|_{t=0}{=}0,$
and thus 
\begin{equation*}
\dU= \cB(U^2, \dU)+\cB(\dU, U^{1}).
\end{equation*}
Arguing as in the proof of  \eqref{eq:BXT}  yields
$$\begin{aligned}
\| \cB(U^2, \dU)\|_{X_T}
&\lesssim\int_0^T \|U^2\|_{\dot B^{\frac3p}_{p,1}}\|\dU\|_{\dot B^{\frac3p}_{p,1}}\,dt
+\int_0^T \|J^2\|_{\dot B^{\frac3p-1}_{p,1}}\|\dU\|_{\dot B^{\frac3p+1}_{p,1}}\,dt\\
&\lesssim\int_0^T \|U^2\|_{\dot B^{\frac3p}_{p,1}}\|\dU\|_{\dot B^{\frac3p-1}_{p,1}}^{\frac12}
\|\dU\|_{\dot B^{\frac3p+1}_{p,1}}^{\frac12}\,dt
+\int_0^T \|J^2\|_{\dot B^{\frac3p-1}_{p,1}}\|\dU\|_{\dot B^{\frac3p+1}_{p,1}}\,dt
\end{aligned}
$$
whence there exists $C>0$ such that  for all $\eta>0,$ 
$$\displaylines{
\| \cB(U^2, \dU)\|_{X_T} \leq \bigl(\eta + C\|J^2\|_{L_T^\infty(\dot B^{\frac3p-1}_{p,1})}\bigr)
\|\dU\|_{L^1_T(\dot B^{\frac{3}{p}+1}_{p, 1})}
\hfill\cr\hfill+C\eta^{-1}\int_0^T \|U^2\|_{\dot B^{\frac3p}_{p,1}}^2\|\dU\|_{\dot B^{\frac3p-1}_{p,1}}\,dt.}
$$
Similarly, we have
$$\begin{aligned}
\| \cB(\dU,U^1)\|_{X_T}&\leq
C\biggl(\int_0^T \|U^1\|_{\dot B^{\frac3p}_{p,1}}\|\dU\|_{\dot B^{\frac3p}_{p,1}}\,dt
+\int_0^T  \|U^1\|_{\dot B^{\frac3p+1}_{p,1}}\|\dJ\|_{\dot B^{\frac3p-1}_{p,1}}\,dt
\biggr)\\
&\leq \eta
\|\dU\|_{L^1_T(\dot B^{\frac{3}{p}+1}_{p, 1})}+
C\int_0^T\bigl( \|U^1\|_{\dot B^{\frac3p+1}_{p,1}}+ \eta^{-1}\|U^1\|_{\dot B^{\frac3p}_{p,1}}^2\bigr)
\|\dU\|_{\dot B^{\frac3p-1}_{p,1}}\,dt.\end{aligned}
$$
Hence, taking $\eta$ small enough, and remembering \eqref{eq:J2},  one gets
 $$
\|\dU\|_{X_T}\leq C\int_0^T \bigl( \|U^1\|_{\dot B^{\frac3p+1}_{p,1}}+ \|U^1\|_{\dot B^{\frac3p}_{p,1}}^2+  \|U^2\|_{\dot B^{\frac3p}_{p,1}}^2\bigr)
\|\dU\|_{\dot B^{\frac3p-1}_{p,1}}\,dt.
$$
 Gronwall lemma thus implies that $\dU\equiv 0$ in $X_T,$ 
whence uniqueness on $[0,T]\times\R^3.$
Of course, in the case where the data are small, then $J^2$ remains 
small for all $T>0,$ and one  gets uniqueness on $\R_+\times\R^3.$ 

%Finally, because the function $t\mapsto \|\dU\|_ { L^1_T(\dot B^{\frac{3}{p}+1}_{p, 1})}$ is also continuous, a standard connectivity argument enable us to conclude that $\dU\equiv 0$ on $\mathbb{R}_+\times \mathbb R^3$.
%
%A small modification of the proof uniqueness allows to prove that 
%in the case of small $J_0$ in $\dot B^{\frac{3}{p}-1}_{p, 1},$
%the Hall-MHD equations are locally  well-posed in  in the sense of Hadamard
%(and globally if also $u_0$ and $B_0$ are small). 
\bigbreak
Let us finally justify  the propagation of Sobolev regularity
in the case where,  additionally, $(u_0,B_0)$ is in $H^s\times H^r$ with $(r,s)$ satisfying \eqref{cond:reg}. 
For expository purpose,  assume that the data fulfill \eqref{eq:smallness}
(the case where only $J_0$ is small being left to the reader).
Our aim is to prove that the solution $(u,B)$ we constructed above satisfies
$$ (u,B)\in \cC_b(\R_+;H^s\times H^r)\andf (\nabla u,\nabla B)\in L^2(\R_+;H^s\times H^r).$$ 
For the time being, let us assume that $(u,B)$ is smooth.
Then,  taking the $L^2$ scalar product of  \eqref{1.1} and \eqref{1.3} by $u$ and $B$, respectively, adding up the resulting identities, and using the fact that  
\begin{equation*}
(\nabla\times(J\times B),~B)=(J\times B,~J)=0,
\end{equation*}
one gets  the following energy balance:
\begin{equation}
\frac{1}{2}\frac{d}{dt}(\|u\|_{L^2}^2+\|B\|_{L^2}^2)+\|\nabla u\|_{L^2}^2+\|\nabla B\|_{L^2}^2=0\label{3.1500}.
\end{equation}
Since $\|z\|_{\dot H^{a}}= \|\Lambda^a z\|_{L^2}$ and 
$\|z\|_{H^a}\sim \|z\|_{L^2} +  \|z\|_{\dot H^{a}},$
in order to prove  estimates in $H^s\times H^r,$ it suffices
to get a suitable control on $\|\Lambda^s u\|_{L^2}$ and on $\|\Lambda^rB\|_{L^2}.$
To this end, apply  $\Lambda^s$ to  \eqref{1.1}, then take the $L^2$ scalar product with $\Lambda^s u$: 
\begin{align}
\frac{1}{2}\frac{d}{dt}\|\Lambda^s u\|_{L^2}^2+\|\Lambda^s\nabla u\|_{L^2}^2&=(\Lambda^s(B\cdot\nabla B), \Lambda^s u)-(\Lambda^s(u\cdot\nabla u), \Lambda^s u)\notag\\
&=:A_1+A_2.\notag
\end{align}
Similarly, apply   $\Lambda^r$ to \eqref{1.3} and take the  $L^2$ scalar product with $\Lambda^r B$
to get:
\begin{align}
\frac{1}{2}\frac{d}{dt}\|\Lambda^r B\|_{L^2}^2+\|\Lambda^r\nabla B\|_{L^2}^2&=(\Lambda^r(u\times B), \Lambda^r J)-(\Lambda^r(J\times B), \Lambda^rJ)\notag\\
&=:A_3+A_4.\notag
\end{align}
To bound $A_1,$ $A_2,$ $A_3$ and $A_4,$ we shall use repeatedly the following 
classical tame estimate in homogeneous Sobolev spaces:
\begin{equation}\label{eq:tame}
\|\Lambda^\sigma(fg)\|_{L^2}\lesssim \|f\|_{L^\infty}\|\Lambda^\sigma g\|_{L^2}
+  \|g\|_{L^\infty}\|\Lambda^\sigma f\|_{L^2},\qquad\sigma\geq0.
\end{equation}
Using  first the Cauchy-Schwarz inequality, then 
\eqref{eq:tame}, the fact that  $s\leq r\leq 1+s$  and Young inequality, we readily get
\begin{align}
|A_1|&\leq C(\|\Lambda^s B\|_{L^2}\|\nabla B\|_{L^\infty}+\|B\|_{L^\infty}\|\Lambda^s\nabla B\|_{L^2})\|u\|_{H^s}\notag\\
%&\leq C(\|B\|_{H^s}\|\nabla B\|_{\dot B^{\frac{3}{p}}_{p, 1}}+\|B\|_{\dot B^{\frac{3}{p}}_{p, 1}}\|\nabla B\|_{H^s})\|u\|_{H^s}\notag\\
&\leq C(\|B\|_{H^s}^2+\|u\|_{H^s}^2)\|\nabla B\|_{L^\infty}+\frac{1}{8}\|\nabla B\|_{H^r}^2+C\|B\|_{L^\infty}^2\|u\|_{H^s}^2,\notag\\
|A_2|&\leq C\|\nabla u\|_{L^\infty}\|u\|_{H^s}^2,\notag\\
|A_3|&\leq C\|\Lambda^r (u\times B)\|_{L^2}\|\Lambda^r J\|_{L^2}\notag\\
&\leq C(\|\Lambda^r u\|_{L^2}^2\|B\|_{L^\infty}^2+\|\Lambda^r B\|_{L^2}^2\|u\|_{L^\infty}^2)+\frac{1}{8}\|\nabla B\|_{H^r}^2\notag\\
&\leq C\bigl(\|u\|_{L^2}^2+\|\nabla u\|_{H^s}^2\bigr)\|B\|_{L^\infty}^2+C\|B\|_{H^r}^2\|u\|_{L^\infty}^2+\frac{1}{8}\|\nabla B\|_{H^r}^2,\notag\\
|A_4|&\leq C\|J\times B\|_{H^r}\|J\|_{H^r}\notag\\
&\leq C(\|J\|_{H^r}^2\|B\|_{L^\infty}+\|J\|_{L^\infty}\|B\|_{H^r}\|J\|_{H^r})\notag\\
&\leq C\|B\|_{L^\infty}\|\nabla B\|_{H^r}^2
+C\|J\|_{L^\infty}^2\|B\|_{H^r}^2+\frac{1}{8}\|J\|_{H^r}^2.\notag
\end{align}
Putting the above estimates  and \eqref{3.1500} together, and using the fact that 
$\|B\|_{L^\infty}$ is small since, according to Proposition \ref{P_25} and 
the first part of the proof, we have  
$$\|B\|_{L^\infty}\lesssim \|B\|_{\dot B^{\frac 3p}_{p,1}}\lesssim \|J\|_{\dot B^{\frac 3p-1}_{p,1}}
\lesssim \|(u_0,B_0,J_0)\|_{\dot B^{\frac 3p-1}_{p,1}},$$ one gets
$$\frac{1}{2}\frac{d}{dt}(\|u\|_{H^s}^2+\|B\|_{H^r}^2)+\|\nabla u\|_{H^s}^2+\|\nabla B\|_{H^r}^2\notag\\
\leq C (\|u\|_{H^s}^2+\|B\|_{H^r}^2)S(t),$$
with
\begin{align*}
S(t):=\|\nabla u(t)\|_{L^\infty}+\|\nabla B(t)\|_{L^\infty}+\|u(t)\|_{L^\infty}^2+\|B(t)\|_{L^\infty}^2+\|J(t)\|_{L^\infty}^2.
\end{align*}
By Gronwall's inequality, we conclude that  for all $t\geq 0,$
$$\displaylines{\quad
\|u(t)\|_{H^s}^2+\|B(t)\|_{H^r}^2+\int^t_0(\|\nabla u(\tau)\|_{H^s}^2+\|\nabla B(\tau)\|_{H^r}^2)\,d\tau\hfill\cr\hfill
\leq \bigl(\|u_0\|_{H^s}^2+\|B_0\|_{H^r}^2\bigr)\exp\biggl(C\int_0^t S(\tau)\,d\tau\biggr)\cdotp\quad}
$$
As  $\int_0^t S(\tau)\,d\tau$ is bounded thanks to the first part of the theorem and 
embedding (use Proposition \ref{P_25} (ii)),
we get a control of the Sobolev norms for all time. 
\medbreak
Let us briefly explain how those latter computations
may be made rigorous. 
Let us consider data $(u_0,B_0)$ fulfilling \eqref{eq:smallness} and such that, 
additionally, we have $u_0$ in $H^s$ and $B_0$ in $H^r$ with $(r,s)$ satisfying \eqref{cond:reg}. 
Then, there exists a sequence $(u_0^n,B_0^n)$ in the Schwartz space $\cS$  such that 
$$
(u_0^n,B_0^n)\to (u_0,B_0)\ \hbox{ in }\ \bigl(\dot B^{\frac3p-1}_{p,1}\cap H^s\bigr)\times\bigl(\dot B^{\frac3p-1}_{p,1}\cap B^{\frac3p}_{p,1}\cap H^r\bigr)\cdotp$$
The classical well-posedness theory in Sobolev spaces (see e.g. \cite{Ch13})  ensures that 
the Hall-MHD system with data $(u_0^n,B_0^n)$  has a unique maximal solution $(u^n,B^n)$ 
on some interval $[0,T^n)$ belonging to all Sobolev spaces.
For that solution, the previous computations hold, and one
ends up for all $t<T^n$ with 
$$\displaylines{\quad
\|u^n(t)\|_{H^s}^2+\|B^n(t)\|_{H^r}^2+\int^t_0(\|\nabla u^n(\tau)\|_{H^s}^2+\|\nabla B^n(\tau)\|_{H^r}^2)\,d\tau\hfill\cr\hfill
\leq \bigl(\|u^n_0\|_{H^s}^2+\|B^n_0\|_{H^r}^2\bigr)\exp\biggl(C\int_0^t S^n(\tau)\,d\tau\biggr),\quad}
$$
where 
$$
S^n(t):=\|\nabla u^n(t)\|_{L^\infty}+\|\nabla B^n(t)\|_{L^\infty}
+\|u^n(t)\|_{L^\infty}^2+\|B^n(t)\|_{L^\infty}^2+\|J^n(t)\|_{L^\infty}^2.
$$
Since the regularized data  $(u_0^n,B_0^n)$ fulfill \eqref{eq:smallness}  for large enough $n,$
they generate a global solution $(\wt u^n,\wt B^n)$ in $E_p$ which, actually, coincides with $(u^n,B^n)$
on $[0,T^n)$
by virtue  of the uniqueness result  that has been proved before. 
Therefore, $S^n$ belongs to $L^1(0,T^n)$ and thus
$(u^n,B^n)$ is in $L^\infty(0,T^n;H^s\times H^r).$ 
Combining with the  continuation argument of e.g. \cite{Ch13}, one can conclude that $T^n=+\infty.$ 
\smallbreak
At this stage, one can assert that:
\begin{enumerate}
\item[i)] $(u^n,B^n,J^n)_{n\in\N}$ is bounded in $E_p$;
\item[ii)]  $(u^n,B^n)_{n\in\N}$ is bounded in $\cC(\R_+;H^s\times H^r)$
and  $(\nabla u^n,\nabla B^n)_{n\in\N}$ is bounded in $L^2(\R_+;H^s\times H^r).$
\end{enumerate}
Hence, up to subsequence, 
\begin{enumerate}
\item[i)] $(u^n,B^n,J^n)$ converges weakly $*$ in  $E_p$;
\item[ii)] $(u^n,B^n)$ converges weakly $*$ in $L^\infty(\R_+;H^s\times H^r);$
\item[iii)] $(\nabla u^n,\nabla B^n)$ converges weakly  in $L^2(\R_+;H^s\times H^r).$
 \end{enumerate}
 Clearly, a small variation of the proof of uniqueness in $E_p$ allows
 to prove the continuity of the flow map. 
 Hence, given that $(u^n_0,B^n_0,J^n_0)$ converges to $(u_0,B_0,J_0)$
 in $\dot B^{\frac3p-1}_{p,1},$ one gets  $(u^n,B^n,J^n)\to (u,B,J)$ strongly in $E_p,$ 
 where  $(u,B,J)$ stands for the solution of \eqref{eq:U} with data $(u_0,B_0,J_0).$ 
 
 Since the weak convergence  results listed above imply  the convergence in the sense of distributions, one can conclude  that the weak limit coincides with the strong one in $E_p.$ 
 Hence $(u,B)$ (resp. $(\nabla u,\nabla B)$)
 is indeed in   $L^\infty(\R_+;H^s\times H^r)$
 (resp. $L^2(\R_+;H^s\times H^r)$). 
  Then,  looking at $(u,B)$ as the solution of a heat equation with right-hand side 
  in $L^2(\R_+; H^{s-1}\times H^{r-1})$ yields 
 the time continuity  with values in Sobolev spaces (use for instance Proposition \ref{Le_27}). 
 This completes the proof of Theorem \ref{Th_1}.\quad$\square$

%%%%%%%%%%%%%%%%%%%%%%%%%%%%%%%%%%%%%%

\section{Local existence for large data in \texorpdfstring{$\dot B^{\frac12}_{2,1},$}{TEXT}
and blow-up criteria}\label{s:new}

Proving  Theorem \ref{Th_2} is based on a priori estimates in the space 
$E_2(T)$ for  smooth solutions $(u,B,v)$ of  \eqref{1.100}.
Those estimates will be obtained by implementing an energy method on  \eqref{1.100}
after localization in the Fourier space.
A slight modification of the method will yield uniqueness and blow-up criteria. 
 
 Throughout this  section and the following one, we shall 
 take advantage of the rescaling \eqref{eq:rescaling},  so as to  reduce
 our study to the case  $\mu=\nu=h=1$ 
 (remember that we have $\mu/\nu=1$  in Theorem \ref{Th_2}).

\medbreak\noindent\textbf{First step: A priori estimates.}

 Our main aim here is to prove the following result. 
 \begin{proposition}\label{P_41}
 Consider a smooth solution $(u,B,P)$ to the Hall-MHD System on $[0,T]\times\R^3$ for some $T>0,$ and denote $v:=u-\nabla\times B.$
 Let  $u^L:=e^{t\Delta} u_0,$ $B^L:=e^{t\Delta} B_0,$ 
$v^L:=e^{t\Delta} v_0$ and  $(\wt u, \wt B, \wt v) := (u-u^{L}, B-B^{L}, v-v^{L})$.
Let  
\begin{align*}
&c_1(t) :=\|v^L(t)\|_{\dot B^{\frac{5}{2}}_{2, 1}},\\
&c_2(t) :=\|u^L(t)\|_{\dot B^{\frac{3}{2}}_{2, 1}}^2+\|B^L(t)\|_{\dot B^{\frac{3}{2}}_{2, 1}}^2+\bigl(\|u_0\|_{\dot B^{\frac{1}{2}}_{2, 1}}+\|v_0\|_{\dot B^{\frac{1}{2}}_{2, 1}}\bigr)\|v^L(t)\|_{\dot B^{\frac{5}{2}}_{2, 1}}.
\end{align*}
 There exist three positive constants $\kappa,$ $C$ and $C_1$  such that if
 \begin{equation}\label{eq:X2}
\int_0^Tc_2(\tau)e^{C\int_\tau^T c_1(\tau')\,d\tau'}\,d\tau <\kappa,
\end{equation}
then we have 
\begin{eqnarray}
\|(\widetilde u, \widetilde B, \widetilde v)\|_{L^{\infty}_T(\dot B^{\frac{1}{2}}_{2, 1})}+C_1\|(\widetilde u, \widetilde B, \widetilde v)\|_{L^{1}_T(\dot B^{\frac{5}{2}}_{2, 1})}&\!\!\!\leq\!\!\!& C\kappa\label{4.999}
\andf\\
\|(u, B, v)\|_{L^{\infty}_T(\dot B^{\frac{1}{2}}_{2, 1})}+C_1\|(u, B, v)\|_{L^{1}_T(\dot B^{\frac{5}{2}}_{2, 1})}
&\!\!\!\leq\!\!\!& \|(u_0,B_0,v_0)\|_{\dot B^{\frac{1}{2}}_{2, 1}}+C\kappa.\qquad\label{4.001}
\end{eqnarray}
\end{proposition}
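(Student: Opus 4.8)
The plan is to run a Littlewood--Paley energy method directly on the $(u,B,v)$ formulation \eqref{1.100} with $\mu=\nu=h=1$, exploiting the cancellation \eqref{1.1100} at the level of each dyadic block. Since $u^L,B^L,v^L$ solve the free heat equation with the respective data, the remainders $(\widetilde u,\widetilde B,\widetilde v)$ satisfy \eqref{1.100} with the same nonlinear right-hand sides but \emph{zero} initial data. I would first apply $\ddj$ to each of the three equations, take the $L^2$ scalar product with $\ddj\widetilde u$, $\ddj\widetilde B$, $\ddj\widetilde v$ respectively, and use the spectral localization to extract from $-\Delta$ a damping term comparable to $2^{2j}\|\ddj\,\cdot\,\|_{L^2}^2$. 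Dividing by the $L^2$ norm of each block, multiplying by $2^{j/2}$ and summing over $j\in\Z$ then reproduces $\frac{d}{dt}\|(\widetilde u,\widetilde B,\widetilde v)\|_{\dot B^{1/2}_{2,1}}$ together with a coercive term controlling $C_1\|(\widetilde u,\widetilde B,\widetilde v)\|_{\dot B^{5/2}_{2,1}}$, which is exactly the two-derivative gain recorded in $E_2(T)$.

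The heart of the argument, and the only genuine obstacle, is the quasilinear term $\nabla\times((\nabla\times v)\times B)$ in the $v$-equation. Splitting $v=v^L+\widetilde v$, I would test the contribution of its top-order part against $\ddj\widetilde v$, move the outer curl (self-adjoint on $\R^3$) onto $\ddj\widetilde v$, and write
$$\bigl(\ddj\nabla\times((\nabla\times\widetilde v)\times B),\ddj\widetilde v\bigr)_{L^2}=\bigl((\nabla\times\ddj\widetilde v)\times B,\nabla\times\ddj\widetilde v\bigr)_{L^2}+\bigl(R_j,\nabla\times\ddj\widetilde v\bigr)_{L^2},$$
with $R_j:=\ddj((\nabla\times\widetilde v)\times B)-(\nabla\times\ddj\widetilde v)\times B$. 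The first term vanishes pointwise by the algebraic identity $(a\times B)\cdot a=0$, which is precisely the localized form of \eqref{1.1100}; without it this term would entail a net loss of one derivative and the energy method would break down. The commutator $R_j$ is one order smoother, and the commutator estimates recalled in the Appendix bound its contribution, after multiplication by $2^{j/2}$ and summation, by products in which the highest regularity $\dot B^{5/2}_{2,1}$ norm appears linearly, the companion factor being controlled in $\dot B^{3/2}_{2,1}$ or in $L^\infty_T(\dot B^{1/2}_{2,1})$; part of this is absorbed by the coercive term, the rest entering the forcing.

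All remaining nonlinearities are at most bilinear of Navier--Stokes type, handled by the product law $\dot B^{3/2}_{2,1}\times\dot B^{1/2}_{2,1}\hookrightarrow\dot B^{1/2}_{2,1}$ after one integration by parts placing a derivative outside. For instance, the purely linear contributions of $B\cdot\nabla B$ and $u\cdot\nabla u$ produce the squares $\|B^L\|_{\dot B^{3/2}_{2,1}}^2$ and $\|u^L\|_{\dot B^{3/2}_{2,1}}^2$ in $c_2$, while the source term $\nabla\times((\nabla\times v^L)\times B)$, which does not benefit from any cancellation, forces the high-regularity norm $\|v^L\|_{\dot B^{5/2}_{2,1}}$: its interaction with the remainder supplies the coefficient $c_1=\|v^L\|_{\dot B^{5/2}_{2,1}}$ multiplying the unknown, while its interaction with the parts of $u$ and $v$ controlled in $L^\infty_T(\dot B^{1/2}_{2,1})$ by the data $\|u_0\|_{\dot B^{1/2}_{2,1}}$ and $\|v_0\|_{\dot B^{1/2}_{2,1}}$ supplies the remaining piece of $c_2$. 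Collecting everything, I expect a differential inequality
$$\frac{d}{dt}X(t)+C_1\|(\widetilde u,\widetilde B,\widetilde v)\|_{\dot B^{5/2}_{2,1}}\le C\,c_1(t)\,X(t)+C\,c_2(t)+C\,\bigl(\text{terms quadratic in the remainder}\bigr),$$
where $X(t):=\|(\widetilde u,\widetilde B,\widetilde v)(t)\|_{\dot B^{1/2}_{2,1}}$, the quadratic terms carrying at least one $\dot B^{5/2}_{2,1}$ factor of the remainder.

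Finally, I would close by a continuity/bootstrap argument: on the maximal subinterval of $[0,T]$ where $X$ and $\|(\widetilde u,\widetilde B,\widetilde v)\|_{L^1(\dot B^{5/2}_{2,1})}$ stay below a fixed multiple of $\kappa$, the quadratic remainder terms are absorbed into $C_1\|(\widetilde u,\widetilde B,\widetilde v)\|_{\dot B^{5/2}_{2,1}}$, leaving the linear inequality $\frac{d}{dt}X\le C c_1 X+C c_2$ with $X(0)=0$. Gronwall's lemma then yields $X(T)\le C\int_0^T c_2(\tau)\exp\bigl(C\int_\tau^T c_1(\tau')\,d\tau'\bigr)\,d\tau<C\kappa$ by \eqref{eq:X2}, and integrating the coercive term gives the matching $L^1_T(\dot B^{5/2}_{2,1})$ bound, so that a standard connectedness argument shows the subinterval is all of $[0,T]$ and establishes \eqref{4.999}. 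The estimate \eqref{4.001} then follows from \eqref{4.999}, the decomposition $(u,B,v)=(u^L,B^L,v^L)+(\widetilde u,\widetilde B,\widetilde v)$, the maximal regularity of the heat flow, and the triangle inequality. Everything hinges on the exact localized cancellation and on a commutator estimate sharp enough to recover the derivative lost in the quasilinear term.
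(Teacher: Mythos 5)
Your proposal is correct and follows essentially the same route as the paper: a Littlewood--Paley energy method on the $(\widetilde u,\widetilde B,\widetilde v)$ system with zero data, where the quasilinear term is split via the commutator $[\dot\Delta_j,B\times](\nabla\times\widetilde v)$ so that the top-order block $\bigl((\nabla\times\dot\Delta_j\widetilde v)\times B\bigr)\cdot(\nabla\times\dot\Delta_j\widetilde v)=0$ vanishes pointwise (the localized form of \eqref{1.1100}), the commutator and the remaining bilinear terms are bounded by the Appendix estimate \eqref{com01} and the product laws, and the resulting inequality is closed by a bootstrap plus Gronwall under \eqref{eq:X2}, with \eqref{4.001} deduced from \eqref{4.999} and the free heat-flow bound. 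The only cosmetic differences are that you commute after moving the outer curl onto the test block rather than before, and that the paper also uses $B={\rm curl}^{-1}(u-v)$ and the interpolation $\|v^L\|_{\dot B^{3/2}_{2,1}}^2\lesssim\|v_0\|_{\dot B^{1/2}_{2,1}}\|v^L\|_{\dot B^{5/2}_{2,1}}$ to route all linear source terms into exactly the stated $c_1,c_2$.
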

\begin{proof}
{}From \eqref{eq:heat}, Plancherel identity and the definition of $\|\cdot\|_{\dot B^s_{2,1}},$
we have  for some universal constant $C_1,$ 
\begin{equation}\label{eq:free} 
\|z\|_{L^{\infty}_T(\dot B^{\frac{1}{2}}_{2, 1})}+C_1\|z\|_{L^{1}_T(\dot B^{\frac{5}{2}}_{2, 1})}
\leq \|z_0\|_{\dot B^{\frac{1}{2}}_{2, 1}}\quad\hbox{for }\ 
z=u^L,B^L,v^L.
\end{equation}
Hence Inequality \eqref{4.001} follows from Inequality \eqref{4.999}. 
\medbreak
In order to prove  \eqref{4.999}, we use the fact that 
 $(\widetilde u, \widetilde B, \widetilde v,Q)$  satisfies 
\begin{equation}\label{4.1}
\left\{
\begin{aligned}
&\partial_t \widetilde u-\Delta\widetilde u=B\cdot\nabla B-u\cdot\nabla u-\nabla Q,\\
&\partial_t{\widetilde B}-\Delta\widetilde B=\nabla\times(v\times B),\\
&\partial_t\widetilde v-\Delta\widetilde v
= B\cdot\nabla B-u\cdot\nabla u
-\nabla\times((\nabla\times\widetilde v)\times B)\\&\qquad-\nabla\times((\nabla\times v^L)\times B)+\nabla\times(v\times u)+2\nabla\times(v\cdot\nabla B)-\nabla Q,
\end{aligned}
\right.
\end{equation}
with null initial condition.\medbreak
%\begin{equation*}(\widetilde u, \widetilde B, \widetilde v)=(0, 0, 0).\end{equation*}

Apply  operator $\dot\Delta_j$ to both sides of \eqref{4.1}, then take the $L^2$ scalar product with $\dot\Delta_j \widetilde u$, $\dot\Delta_j\widetilde B$, $\dot\Delta_j\widetilde v,$ respectively.  To handle the third equation of \eqref{4.1}, use that 
$$\nabla\times\dot\Delta_j((\nabla\times\widetilde v)\times B)\\
=\nabla\times([\dot\Delta_j, B\times](\nabla\times\widetilde v))+\nabla\times(B\times\dot\Delta_j(\nabla\times\widetilde v)),
$$
and that the $L^2$ scalar product of the last term with $\dot\Delta_j\wt v$ is 0. 
Then, we get
$$\begin{aligned}
\frac{1}{2}\frac{d}{dt}\|\dot\Delta_j\widetilde u\|_{L^2}^2 +\|\nabla\dot\Delta_j\widetilde u\|_{L^2}^2
&\leq(\|\dot\Delta_j (B\cdot\nabla B)\|_{L^2}+\|\dot\Delta_j (u\cdot\nabla u)\|_{L^2})\|\dot\Delta_j\widetilde u\|_{L^2},\\
\frac{1}{2}\frac{d}{dt}\|\dot\Delta_j\widetilde B\|_{L^2}^2 +\|\nabla\dot\Delta_j\widetilde B\|_{L^2}^2
&\leq \|\nabla\times\dot\Delta_j (v\times B)\|_{L^2}\|\dot\Delta_j\widetilde B\|_{L^2},\\
\frac{1}{2}\frac{d}{dt}\|\dot\Delta_j\widetilde v\|_{L^2}^2 +\|\nabla\dot\Delta_j\widetilde v\|_{L^2}^2&\leq\bigl(\|\dot\Delta_j (B\cdot\nabla B)\|_{L^2}
+\|\dot\Delta_j (u\cdot\nabla u)\|_{L^2}\bigr)\|\dot\Delta_j\widetilde v\|_{L^2}\\
+\bigl(\|[\dot\Delta_j, B\times](&\nabla\times\widetilde v)\|_{L^2}
+\|\dot\Delta_j((\nabla\times v^L)\times B)\|_{L^2}+\|\dot\Delta_j(v\times u)\|_{L^2}\\
&\hspace{3cm}+2\|\dot\Delta_j(v\cdot\nabla B)\|_{L^2}\bigr)\|\nabla\times\dot\Delta_j\widetilde v\|_{L^2}.\end{aligned}
$$
Hence, using   Bernstein inequalities, one can deduce after time integration  that
for some universal constants $C_1$ and $C_2,$ 
\begin{multline}
\|(\dot\Delta_j\widetilde u,\dot\Delta_j\widetilde B,\dot\Delta_j\widetilde v)(t)\|_{L^2}
+C_12^{2j}\int_{0}^t \|(\dot\Delta_j\widetilde u,\dot\Delta_j\widetilde B,\dot\Delta_j\widetilde v)\|_{L^2}\,d\tau\\
\leq \int_0^t\Bigl(\|\dot\Delta_j (B\cdot\nabla B)\|_{L^2}+\|\dot\Delta_j (u\cdot\nabla u)\|_{L^2}
+C_22^j\Bigl(\|[\dot\Delta_j, B\times](\nabla\times\widetilde v)\|_{L^2}\\
+\|\dot\Delta_j((\nabla\times v^L)\times B)\|_{L^2}+\|\dot\Delta_j(v\times u)\|_{L^2}\\
+\|\dot\Delta_j(v\cdot\nabla B)\|_{L^2}+\|\dot\Delta_j (v\times B)\|_{L^2})\,d\tau\Bigr)\Bigr)\,d\tau.\label{4.2}
\end{multline}
Multiplying  both sides of \eqref{4.2} by $2^\frac{j}{2}$ and summing up  over $j\in\mathbb{Z}$, we obtain that
\begin{multline}
\|(\widetilde u,\widetilde B,\widetilde v)(t)\|_{\dot B^{\frac{1}{2}}_{2, 1}}
+C_1\int_0^t\|(\widetilde u,\widetilde B,\widetilde v)\|_{\dot B^{\frac{5}{2}}_{2, 1}}\,d\tau\\
\leq C_2\int_0^t \Bigl(\|B\cdot\nabla B\|_{\dot B^{\frac{1}{2}}_{2, 1}}+\|u\cdot\nabla u\|_{\dot B^{\frac{1}{2}}_{2, 1}}+\|v\times B\|_{\dot B^{\frac{3}{2}}_{2, 1}}+\|v\times u\|_{\dot B^{\frac{3}{2}}_{2, 1}}
\\+\|v\cdot\nabla B\|_{\dot B^{\frac{3}{2}}_{2, 1}}+\|(\nabla\times v^L)\times B\|_{\dot B^{\frac{3}{2}}_{2, 1}}
+\sum_j 2^\frac{3j}{2}\|[\dot\Delta_j, B\times](\nabla\times\widetilde v)\|_{L^2}\Bigr)\,d\tau.\label{4.5}
\end{multline}
Using \eqref{prod1}, Proposition \ref{P_25} ${\rm{(i)}}$, ${\rm{(ii)}}$, ${\rm{(iii)}}$ and Young's inequality yields
$$\begin{aligned}
\|B\cdot\nabla B\|_{\dot B^{\frac{1}{2}}_{2, 1}}%&\lesssim\|B\|_{\dot B^{\frac{3}{2}}_{2, 1}}^2\\
&\lesssim\|B^L\|_{\dot B^{\frac{3}{2}}_{2, 1}}^2+\|\tilde B\|_{\dot B^{\frac{3}{2}}_{2, 1}}^2\\
&\lesssim\|B^L\|_{\dot B^{\frac{3}{2}}_{2, 1}}^2+\|\widetilde B\|_{\dot B^{\frac{1}{2}}_{2, 1}}\|\widetilde B\|_{\dot B^{\frac{5}{2}}_{2, 1}},
\end{aligned}$$
$$\begin{aligned}
\|u\cdot\nabla u\|_{\dot B^{\frac{1}{2}}_{2, 1}}\!+\!\|v\times B\|_{\dot B^{\frac{3}{2}}_{2, 1}}\!+\!\|v\times u\|_{\dot B^{\frac{3}{2}}_{2, 1}}
&\lesssim \|u\|_{\dot B^{\frac{3}{2}}_{2, 1}}^2
+\bigl(\|B\|_{\dot B^{\frac{3}{2}}_{2, 1}}+\|u\|_{\dot B^{\frac{3}{2}}_{2, 1}}\bigr)
\|v\|_{\dot B^{\frac{3}{2}}_{2, 1}}\\
&\lesssim \|u^L\|_{\dot B^{\frac{3}{2}}_{2, 1}}^2+\|B^L\|_{\dot B^{\frac{3}{2}}_{2, 1}}^2\!+\!\|v^L\|_{\dot B^{\frac{1}{2}}_{2, 1}}\|v^L\|_{\dot B^{\frac{5}{2}}_{2, 1}}\\
+\|\widetilde u\|_{\dot B^{\frac{1}{2}}_{2, 1}}\|\widetilde u\|_{\dot B^{\frac{5}{2}}_{2, 1}}&+\|\widetilde B\|_{\dot B^{\frac{1}{2}}_{2, 1}}\|\widetilde B\|_{\dot B^{\frac{5}{2}}_{2, 1}}+\|\widetilde v\|_{\dot B^{\frac{1}{2}}_{2, 1}}\|\widetilde v\|_{\dot B^{\frac{5}{2}}_{2, 1}}.
\end{aligned}$$
Using that $B={\rm{curl}}^{-1}(u-v)$ and that
  $\nabla{\rm{curl}}^{-1}$ is a self-map on $\dot B^{\frac{3}{2}}_{2, 1}$
(see Proposition \ref{P_25} ${\rm{(vii)}}$) yields
$$\begin{aligned}
\|v\cdot\nabla B\|_{\dot B^{\frac{3}{2}}_{2, 1}}
&\lesssim \|v\|_{\dot B^{\frac{3}{2}}_{2, 1}}\|\nabla {\rm{curl}}^{-1}(u-v)\|_{\dot B^{\frac{3}{2}}_{2, 1}}\\
&\lesssim \|v\|_{\dot B^{\frac{3}{2}}_{2, 1}}^2+\|u\|_{\dot B^{\frac{3}{2}}_{2, 1}}^2\\
&\lesssim\|u^L\|_{\dot B^{\frac{3}{2}}_{2, 1}}^2+\|v^L\|_{\dot B^{\frac{1}{2}}_{2, 1}}\|v^L\|_{\dot B^{\frac{5}{2}}_{2, 1}}+\|\widetilde u\|_{\dot B^{\frac{1}{2}}_{2, 1}}\|\widetilde u\|_{\dot B^{\frac{5}{2}}_{2, 1}}+\|\widetilde v\|_{\dot B^{\frac{1}{2}}_{2, 1}}\|\widetilde v\|_{\dot B^{\frac{5}{2}}_{2, 1}}
\end{aligned}$$
and, using also  \eqref{prod1}, 
$$\begin{aligned}
\|(\nabla\!\times\! v^L)\!\times\! B\|_{\dot B^{\frac{3}{2}}_{2, 1}}
&\!\lesssim \|\nabla\times v^L\|_{\dot B^{\frac{3}{2}}_{2, 1}}\|B\|_{\dot B^{\frac{3}{2}}_{2, 1}}\\
&\!\lesssim \|v^L\|_{\dot B^{\frac{5}{2}}_{2, 1}}\|{\rm{curl}}^{-1}(u-v)\|_{\dot B^{\frac{3}{2}}_{2, 1}}\\
&\!\lesssim \|v^L\|_{\dot B^{\frac{5}{2}}_{2, 1}}\bigl(\|u^L\|_{\dot B^{\frac{1}{2}}_{2,1}}
+\|v^L\|_{\dot B^{\frac{1}{2}}_{2, 1}}\!+\!\|\wt u\|_{\dot B^{\frac{1}{2}}_{2, 1}}
\!+\!\|\wt v\|_{\dot B^{\frac{1}{2}}_{2, 1}}\bigr)\cdotp
\end{aligned}$$
{}From  the estimate \eqref{com01} with $s=3/2$ and the embedding 
$\dot B^{\frac32}_{2,1}\hookrightarrow L^\infty,$ we get
\begin{equation}\label{com00}
\sum_j 2^\frac{3j}{2}\|[\dot\Delta_j, b]a\|_{L^2}
\lesssim \|\nabla b\|_{\dot B^{\frac{3}{2}}_{2, 1}}\|a\|_{\dot B^{\frac{1}{2}}_{2, 1}},
\end{equation}
whence 
$$\begin{aligned}
\sum_j 2^\frac{3j}{2}\|[\dot\Delta_j, B\times](\nabla\times \widetilde v)\|_{L^2}
&\lesssim\|v-u\|_{\dot B^{\frac{3}{2}}_{2, 1}}^2+\|\widetilde v\|_{\dot B^{\frac{3}{2}}_{2, 1}}^2\\
&\lesssim \|u^L\|_{\dot B^{\frac{3}{2}}_{2, 1}}^2+\|v^L\|_{\dot B^{\frac{1}{2}}_{2, 1}}\|v^L\|_{\dot B^{\frac{5}{2}}_{2, 1}}
\\&\qquad\qquad+\|\widetilde u\|_{\dot B^{\frac{1}{2}}_{2, 1}}\|\widetilde u\|_{\dot B^{\frac{5}{2}}_{2, 1}}+\|\widetilde v\|_{\dot B^{\frac{1}{2}}_{2, 1}}\|\widetilde v\|_{\dot B^{\frac{5}{2}}_{2, 1}}.
\end{aligned}$$
Plugging the above estimates into the right-hand side of \eqref{4.5} and using 
\eqref{eq:free},  we end up with
\begin{align}\label{4.6}
&X(t)+C_1\int_0^t D(\tau)\,d\tau\leq C\int_0^t X(\tau)D(\tau)\,d\tau +C\int_0^t(c_1(\tau)X(\tau)+c_2(\tau))\,d\tau,
\end{align}
where $c_1$ and $c_2$ have been defined in the proposition, 
\begin{align*}
&X(t) :=\|\widetilde u(t)\|_{\dot B^{\frac{1}{2}}_{2, 1}}+\|\widetilde B(t)\|_{\dot B^{\frac{1}{2}}_{2, 1}}+\|\widetilde v(t)\|_{\dot B^{\frac{1}{2}}_{2, 1}}\\
\andf&D(t) :=\|\widetilde u(t)\|_{\dot B^{\frac{5}{2}}_{2, 1}}+\|\widetilde B(t)\|_{\dot B^{\frac{5}{2}}_{2, 1}}+\|\widetilde v(t)\|_{\dot B^{\frac{5}{2}}_{2, 1}}.
\end{align*}
Note that whenever
\begin{equation}\label{eq:X1}
2C\sup_{\tau\in[0,t]} X(\tau)\leq C_1,
\end{equation}
Inequality \eqref{4.6} combined with Gronwall lemma  implies that
\begin{equation}
X(t)+\frac{C_1}2\int_0^t D(\tau)\,d\tau\leq C\int_0^tc_2(\tau)e^{C\int_\tau^t c_1(\tau')\,d\tau'}\,d\tau.
\end{equation}
Now, if  Condition \eqref{eq:X2} is satisfied with $\kappa:=C_1/2C^2,$ 
then the fact that the left-hand side of \eqref{4.6} is a continuous function on $[0,T]$
that vanishes at $0$ combined with a standard bootstrap argument 
allows to prove that \eqref{eq:X1} and thus \eqref{eq:X2} is satisfied. 
Renaming the constants completes the proof of the proposition. 
\end{proof}
%In order to handle \eqref{4.6},  we need the following lemma. 
%\begin{lemma}\label{Le_4.0}Assume that $\eta$ is a positive constant,  and that the nonnegative and continuous functions $X,$ $D$, $c_1$ and  $c_2$ fulfill 
%\begin{equation}\label{4.7a}X(t)+\int_0^t D(\tau)\,d\tau\leq X(0)+ C \int_0^t X(\tau)D(\tau)\,d\tau +\int_0^t(c_1X+c_2)(\tau)\,d\tau.\end{equation}
%If, for some $0<\kappa\leq1/(4C),$ we have 
%\begin{equation}\label{4.7}X(0)\leq\kappa,\end{equation} and if, for some $T>0$ we have 
%\begin{equation}\label{4.8}\int_0^T \Bigl(\frac{c_1}{2C}+c_2\Bigr)(\tau)\,d\tau<\kappa,
%\end{equation}then,  for all $t\in[0, T],$
%\begin{equation}\label{4.9}X(t)+\frac12\int_0^t D(\tau)\,d\tau\leq 2\kappa.\end{equation}
%\end{lemma}
%\begin{proof} Since \eqref{4.9} is true  for $t=0$ with strict inequality,  there exists some $T_0\in(0,T]$ such that  
%\begin{equation*}X(t)\leq2\kappa\quad\hbox{for all }\ t\in[0,T_0].\end{equation*}
%Hence,  plugging that inequality in \eqref{4.7a} and using 
%that $2\kappa C\leq\frac12,$ we see that  
%$$X(t)+\frac12\int_0^t D(\tau)\,d\tau\leq X(0)+\int_0^t \Bigl(\frac{c_1}{2C}+ c_2\Bigr)(\tau)\,d\tau<2\kappa\quad\hbox{on }\ [0,T_0].$$
%Then, by a standard connectivity  argument one may conclude that \eqref{4.9} is satisfied on $[0,T].$
%\end{proof}
%Let us fix some $\kappa\in(0,1/4C].$ Then  \eqref{3.13} guarantees that  there exists some  $T>0$ such that $c_1$ and $c_2$ defined  in Proposition \ref{P_41}  satisfy condition \eqref{4.8}.  
%Hence, we have \eqref{4.999} which, combined with \eqref{3.12}, implies \eqref{4.001}.\end{proof}

\noindent\textbf{Second step: Constructing approximate solutions.}
It is based on Friedrichs' method : consider the spectral cut-off operator $\mathbb{E}_n$ defined by 
$$\mathcal F({\mathbb{E}_n}f)(\xi)=1_{\{n^{-1}\leq|\xi|\leq n\}}(\xi)\mathcal F(f)(\xi).$$
We  want to solve the following truncated system:
\begin{equation}\label{4.000}
\left\{
\begin{aligned}
&\partial_t u-\Delta u=\mathbb{E}_n\mathcal P(\mathbb{E}_nB\cdot\mathbb{E}_n\nabla B-\mathbb{E}_nu\cdot\nabla \mathbb{E}_nu),\\
&\partial_t{\mathnormal B}-\Delta B=\nabla\times\mathbb{E}_n(\mathbb{E}_n(u-\nabla\times B)\times\mathbb{E}_nB),\\
\end{aligned}\right.\end{equation}
supplemented with initial data $(\mathbb{E}_n\mathnormal u_{0}, \mathbb{E}_n\mathnormal B_{0}).$
\medbreak
We need  the  following obvious lemma:
\begin{lemma}
Let $s\in\mathbb{R}$ and  $k\geq 0$. Let $f\in \dot{B}_{2, 1}^s.$ Then, for all $n\geq1$, we have
\begin{equation}
\|\mathbb{E}_n f\|_{\dot{B}_{2, 1}^{s+k}}\lesssim n^k\|f\|_{\dot{B}_{2, 1}^s},\label{4.100}
\end{equation}
\begin{equation}
\lim\limits_{n\to \infty}\|\mathbb{E}_n f-f\|_{\dot{B}_{2, 1}^{s}}=0\label{4.200},
\end{equation}
\begin{equation}
\|\mathbb{E}_n f-f\|_{\dot{B}_{2, 1}^{s}}\lesssim \frac{1}{n^k}\|f\|_{\dot{B}_{2, 1}^{s+k}}\label{4.201}.
\end{equation}
\end{lemma}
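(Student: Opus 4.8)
The plan is to read all three inequalities directly off the definition of the norm $\|g\|_{\dot B^s_{2,1}}=\sum_{j\in\Z}2^{js}\|\ddj g\|_{L^2}$, exploiting only two elementary facts about $\mathbb E_n$: it is a Fourier multiplier, hence commutes with every Littlewood--Paley block ($\ddj\mathbb E_n f=\mathbb E_n\ddj f$), and its symbol is bounded by $1$, so by Plancherel's theorem $\|\mathbb E_n g\|_{L^2}\le\|g\|_{L^2}$. The single geometric observation I would record first is that $\mathcal F(\ddj g)$ is supported in $\{|\xi|\sim 2^j\}$ while $\mathcal F(\mathbb E_n g)$ lives in $\{n^{-1}\le|\xi|\le n\}$; consequently $\mathbb E_n\ddj f$ vanishes unless these two annuli meet, that is, unless $\tfrac12 n^{-1}\le 2^j\le 2n$. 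Every estimate is then a matter of bookkeeping over the surviving indices $j$.

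For \eqref{4.100} I would write $\|\mathbb E_n f\|_{\dot B^{s+k}_{2,1}}=\sum_{j\in\Z}2^{jk}\,2^{js}\|\mathbb E_n\ddj f\|_{L^2}$ and note that only indices with $2^j\le 2n$ contribute, for which $2^{jk}\lesssim n^k$ because $k\ge0$; using $\|\mathbb E_n\ddj f\|_{L^2}\le\|\ddj f\|_{L^2}$ and factoring out $n^k$ leaves exactly $\|f\|_{\dot B^s_{2,1}}$. For \eqref{4.200} I would observe that $\ddj(\mathbb E_n-\mathrm{Id})f$ again has $L^2$ norm at most $\|\ddj f\|_{L^2}$ and, for each fixed $j$, vanishes as soon as $n$ is large enough that $\{|\xi|\sim 2^j\}\subset\{n^{-1}\le|\xi|\le n\}$. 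Hence each summand of $\sum_j 2^{js}\|\ddj(\mathbb E_n-\mathrm{Id})f\|_{L^2}$ tends to $0$ and is dominated by the summable sequence $2^{js}\|\ddj f\|_{L^2}$ (this is where $f\in\dot B^s_{2,1}$ is used), so the dominated convergence theorem for series closes the argument.

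For \eqref{4.201} the symbol of $\mathbb E_n-\mathrm{Id}$ is supported in $\{|\xi|>n\}\cup\{|\xi|<n^{-1}\}$, so only blocks with $2^j\gtrsim n$ or $2^j\lesssim n^{-1}$ survive. The high-frequency range carries the announced rate: splitting the weight as $2^{js}=2^{-jk}2^{j(s+k)}$ and using $2^{-jk}\lesssim n^{-k}$ for $2^j\gtrsim n$, summation and factorisation of $n^{-k}$ reproduce $\|f\|_{\dot B^{s+k}_{2,1}}$. I expect the low-frequency end of the annulus to be the only delicate point, since there the weight $2^{-jk}$ grows rather than decays and the naive weight-splitting fails; that contribution $\sum_{2^j\lesssim n^{-1}}2^{js}\|\ddj f\|_{L^2}$ must instead be controlled as the remainder of the convergent series defining $\|f\|_{\dot B^s_{2,1}}$, which is precisely what makes $\mathbb E_n f\to f$ and keeps the truncation error governed by its high-frequency part. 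Since no commutator, product law or smoothing estimate enters anywhere, beyond this frequency bookkeeping there is no genuine analytic difficulty.
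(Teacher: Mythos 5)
Your treatment of \eqref{4.100} and \eqref{4.200} is complete and correct, and since the paper declares the lemma obvious and supplies no proof, the block-by-block bookkeeping you use is surely the intended argument: commutation of $\mathbb{E}_n$ with $\ddj$, the Plancherel bound $\|\mathbb{E}_n g\|_{L^2}\le\|g\|_{L^2}$, the restriction to the indices $j$ whose annulus meets $\{n^{-1}\le|\xi|\le n\}$, and, for \eqref{4.200}, dominated convergence for series, which is exactly where $f\in\dot B^s_{2,1}$ enters.

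For \eqref{4.201}, however, the point you flag as ``delicate'' is a genuine gap, and no reformulation of your sketch can close it, because the inequality as stated is false for $k>0$: it fails on the low-frequency truncation. Take $f$ a single block, with $\widehat f$ supported in an annulus of radius $\sim 2^{j_0}$ where $2^{j_0}\sim n^{-1}$ is chosen small enough that this support lies in $\{|\xi|<n^{-1}\}$. Then $\mathbb{E}_n f=0$, so the left-hand side is $\|f\|_{\dot B^s_{2,1}}\sim 2^{j_0s}\|f\|_{L^2}$, while the right-hand side is $n^{-k}\|f\|_{\dot B^{s+k}_{2,1}}\sim n^{-k}2^{j_0k}2^{j_0s}\|f\|_{L^2}\sim n^{-2k}2^{j_0s}\|f\|_{L^2}$; the ratio blows up like $n^{2k}$. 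Accordingly, your proposed remedy --- controlling the low-frequency contribution ``as the remainder of the convergent series defining $\|f\|_{\dot B^s_{2,1}}$'' --- only yields $o(1)$ as $n\to\infty$, i.e.\ a second proof of \eqref{4.200}, not the rate $n^{-k}$ against the $\dot B^{s+k}_{2,1}$ norm; no dominated-convergence argument can produce a uniform rate. The correct quantitative statement splits the error: the high-frequency part obeys your bound $\sum_{2^j\gtrsim n}2^{js}\|\ddj f\|_{L^2}\lesssim n^{-k}\|f\|_{\dot B^{s+k}_{2,1}}$, while for $2^j\lesssim n^{-1}$ the weight splitting goes the other way, $2^{js}=2^{jk}\,2^{j(s-k)}\lesssim n^{-k}2^{j(s-k)}$, so the low-frequency part is $\lesssim n^{-k}\|f\|_{\dot B^{s-k}_{2,1}}$; thus \eqref{4.201} holds with $\|f\|_{\dot B^{s+k}_{2,1}}+\|f\|_{\dot B^{s-k}_{2,1}}$ on the right, or verbatim if $\mathbb{E}_n$ is replaced by the one-sided cut-off $1_{\{|\xi|\le n\}}(D)$. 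This defect lies in the statement itself rather than in your strategy, and it is inert for the paper: in the sequel only \eqref{4.100} and \eqref{4.200} are invoked (in the Friedrichs approximation and compactness steps of Section 4), while \eqref{4.201} is never used.
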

We claim that \eqref{4.000} is an ODE  in the Banach space $L^2(\R^3;\R^3\times\R^3)$ 
for which the standard Cauchy-Lipschitz theorem applies.  
Indeed, the above lemma ensures that $\mathbb{E}_n$ maps $L^2$ to all Besov spaces, 
and   that the right-hand side of  \eqref{4.000} is a continuous bilinear map 
from   $L^2(\R^3;\R^3\times\R^3)$  to itself. 
We thus deduce that  \eqref{4.000} admits a unique maximal solution 
$(u^n,B^n)\in \cC^1([0,T^n); L^2(\R^3;\R^3\times\R^3)).$
Furthermore, as $\mathbb{E}_n^2=\mathbb{E}_n,$ uniqueness implies  $\mathbb{E}_n u^n=u^n$
and  $\mathbb{E}_n B^n=B^n,$ and we clearly have $\div u^n=\div B^n=0.$
Being spectrally supported in the annulus $\{n^{-1}\leq|\xi|\leq n\},$ 
one can also deduce that the solution belongs to $\cC^1([0,T^n); \dot B^s_{2,1})$ for all $s\in\R.$ 
Hence, setting $J^n:=\nabla\times B^n$ and $v^n:=u^n-J^n,$  we see that 
$u^n,$ $B^n$ and $v^n$ belong to the space $E_{2}(T)$ for all $T<T^n$ and fulfill:
\begin{equation}\label{4.000b}
\left\{
\begin{aligned}
&\partial_t u^n-\Delta u^n=\mathbb{E}_n\mathcal P(B^n\cdot\nabla B^n-u^n\cdot\nabla u^n),\\
&\partial_tB^n-\Delta B^n=\nabla\times\mathbb{E}_n(v^n\times B^n),\\
&\partial_t{v^n}-\Delta v^n
=\mathbb{E}_n\cP\Bigl(B^n\cdot\nabla B^n-u^n\cdot\nabla u^n-\nabla\times((\nabla\times v^n)\times B^n)\\
&\hspace{5cm}+\nabla\times(v^n\times u^n)+2\nabla\times(v^n\cdot\nabla B^n)\Bigr)\cdotp
\end{aligned}\right.\end{equation}

\noindent\textbf{Third step: uniform estimates} 

We want to apply Proposition \ref{P_41} to our approximate solution $(u^n, B^n, v^n).$ The key point is that since $\mathbb{E}_n$ is an $L^2$ orthogonal projector, it has no effect on the energy estimates. We claim that $T^n$ may be bounded from below by the supremum $T$ of all the times satisfying \eqref{eq:X2}, and that $(u^n, B^n, v^n)_{n\geq1}$ is bounded in $E_{2}(T)$.
To prove our claim, , we split $(u^n, B^n, v^n)$ into 
$$(u^n, B^n, v^n)=(u^{n, L}, B^{n, L}, v^{n, L})+(\wt u^{n}, \wt B^{n}, \wt v^{n}),$$
 where
 $$u^{n, L}:=\mathbb{E}_ne^{t\Delta} u_0,\quad B^{n, L}:=\mathbb{E}_ne^{t\Delta} B_0\andf v^{n, L}:=\mathbb{E}_ne^{t\Delta} v_0.$$ 
 Since  $\mathbb{E}_n$ maps any Besov space $\dot B^s_{2,1}$ to itself with norm $1,$
  Condition \eqref{eq:X2} may be made independent of $n$ and thus, 
  so does the corresponding time $T.$  Now, as $(\wt u^n, \wt B^n, \wt v^n)$ is spectrally supported in $\{\xi\in\mathbb{R}^3\,|\,n^{-1}\leq|\xi|\leq n\}$, the estimate \eqref{4.999} ensures that it belongs to $L^\infty([0,T]; L^2(\mathbb{R}^3))$. So, finally, the standard continuation criterion for ordinary differential equations implies that $T^n$ is greater than any time $T$ satisfying \eqref{eq:X2} and that we have, for all $n\geq 1,$
\begin{eqnarray}
\|(\wt u^n,\wt B^n,\wt v^n)\|_{L^{\infty}_T(\dot B^{\frac{1}{2}}_{2, 1})}\!+\!C_1\|(\wt u^n, \wt B^n, \wt v^n)\|_{L^{1}_T(\dot B^{\frac{5}{2}}_{2, 1})}&\!\!\!\!\leq\!\!\!\!& C\kappa\label{4.888}
\andf\\
\|(u^n, B^n, v^n)\|_{L^{\infty}_T(\dot B^{\frac{1}{2}}_{2, 1})}\!+\!C_1\|(u^n, B^n, v^n)\|_{L^{1}_T(\dot B^{\frac{5}{2}}_{2, 1})}
&\!\!\!\!\leq\!\!\!\!& \|(u_0,B_0,v_0)\|_{\dot B^{\frac{1}{2}}_{2, 1}}+C\kappa.\qquad\quad\label{4.002}
\end{eqnarray}

\noindent\textbf{Fourth step: existence of a solution} 

We claim that, up to an extraction, the sequence $(u^n, B^n, v^n)_{n\in\mathbb{N}}$ converges in $\mathcal{D}'(\mathbb{R}^+\times\mathbb{R}^3)$ to a solution $(u, B, v)$ of \eqref{1.100} supplemented
with data $(u_0,B_0,v_0)$  having the desired regularity properties.  
The definition of $\mathbb{E}_n$ entails that 
\begin{equation*}
(\mathbb{E}_n u_0, \mathbb{E}_n B_0, \mathbb{E}_n v_0)\to(u_0, B_0, v_0)\quad{\rm{in}}\quad\dot{B}^{\frac12}_{2, 1},
\end{equation*}
and Proposition \ref{Le_27} thus ensures  that
$(u^{n, L}, B^{n, L}, v^{n, L})\to (u^L, B^L, v^L)$  in $E_{2}(T).$
\medbreak
Proving  the convergence of $(\wt u^n, \wt B^n, \wt v^n)$ will be achieved 
from  compactness arguments :  we shall exhibit uniform bounds in suitable spaces
for $(\partial_tu^n,\partial_t B^n, \partial_tv^n)_{n\in\N}$ so as to glean some 
H\"older regularity with respect to the time variable. Then, combining
with compact embedding will enable us to  apply Ascoli's theorem and to get the existence of a limit $(u, B, v)$ for a subsequence. Furthermore, the uniform bounds of the previous steps provide us with additional regularity and convergence properties so that we will be able to pass to the limit in \eqref{4.000b}.
Let us start with a  lemma.
\begin{lemma}\label{Le_4.4} Sequence 
$(\wt u^n, \wt B^n, \wt v^n)_{n\geq 1}$ is  bounded in $\cC^{\frac12}([0, T]; \dot{B}^{-\frac12}_{2, 1}).$
\end{lemma}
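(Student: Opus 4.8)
The plan is to show that the time derivatives $\partial_t\wt U^n$, where $\wt U^n:=(\wt u^n,\wt B^n,\wt v^n)$, are bounded in $L^2(0,T;\dot B^{-\frac12}_{2,1})$ uniformly in $n$. Once this is granted, since $\wt U^n$ vanishes at $t=0$ we may write $\wt U^n(t)-\wt U^n(t')=\int_{t'}^t\partial_\tau\wt U^n\,d\tau$ for $0\le t'\le t\le T$, and the Cauchy--Schwarz inequality gives
\[
\|\wt U^n(t)-\wt U^n(t')\|_{\dot B^{-\frac12}_{2,1}}\le |t-t'|^{\frac12}\,\|\partial_t\wt U^n\|_{L^2_T(\dot B^{-\frac12}_{2,1})},
\]
which is exactly the claimed uniform $\cC^{\frac12}([0,T];\dot B^{-\frac12}_{2,1})$ bound (the uniform bound on $\sup_t\|\wt U^n(t)\|_{\dot B^{-\frac12}_{2,1}}$ follows by taking $t'=0$). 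Thus everything reduces to estimating, uniformly in $n$, the right-hand sides of the equations satisfied by $\wt U^n$ (the approximate analogue of \eqref{4.1}, obtained from \eqref{4.000b} by subtracting the free evolutions) in $L^2_T(\dot B^{-\frac12}_{2,1})$.

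First I would record two auxiliary facts that turn all the estimates below into $L^2$-in-time controls. Interpolating the uniform bound \eqref{4.888} between $L^\infty_T(\dot B^{\frac12}_{2,1})$ and $L^1_T(\dot B^{\frac52}_{2,1})$ yields a uniform bound for $\wt U^n$ --- and, via \eqref{4.002}, for $(u^n,B^n,v^n)$ --- in $L^2_T(\dot B^{\frac32}_{2,1})$. Since the Laplacian maps $\dot B^{\frac32}_{2,1}$ into $\dot B^{-\frac12}_{2,1}$, this already controls the linear terms $\Delta\wt u^n,\Delta\wt B^n,\Delta\wt v^n$ in $L^2_T(\dot B^{-\frac12}_{2,1})$. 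Moreover, writing $J^n=u^n-v^n$ and combining the equivalence \eqref{eq:equivnorm} with \eqref{4.002}, one gets that $B^n$ is bounded in $L^\infty_T(\dot B^{\frac32}_{2,1})$ uniformly in $n$ --- a fact I will crucially exploit.

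Since $\mathbb{E}_n$, $\cP$ and ${\rm curl}^{-1}$ act boundedly on every $\dot B^s_{2,1}$ ($\mathbb{E}_n$ with norm $\le1$), it remains to bound each quadratic term in $L^2_T(\dot B^{-\frac12}_{2,1})$. The terms of Navier--Stokes type are harmless: writing $B^n\cdot\nabla B^n=\div(B^n\otimes B^n)$ and so on, the product law \eqref{prod1} gives for instance $\|\div(B^n\otimes B^n)\|_{\dot B^{-\frac12}_{2,1}}\lesssim\|B^n\|_{\dot B^{\frac32}_{2,1}}\|B^n\|_{\dot B^{\frac12}_{2,1}}$, which lies in $L^2_T$ by the bounds above; the contributions $\nabla\times(v^n\times B^n)$, $\nabla\times(v^n\times u^n)$ and $\nabla\times(v^n\cdot\nabla B^n)$ are treated identically. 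The \emph{main obstacle} is the quasilinear term $\nabla\times((\nabla\times\wt v^n)\times B^n)$ (together with its $v^{n,L}$ counterpart), which formally carries two derivatives: the naive bound would push $v^n$ into $\dot B^{\frac52}_{2,1}$, where only an $L^1_T$ control is available. Instead I would place $B^n$ at the $\dot B^{\frac32}_{2,1}$ level secured above and keep $v^n$ at $\dot B^{\frac32}_{2,1}$, using that $\dot B^{\frac32}_{2,1}$ is a multiplier on $\dot B^{\frac12}_{2,1}$:
\[
\|\nabla\times((\nabla\times\wt v^n)\times B^n)\|_{\dot B^{-\frac12}_{2,1}}\lesssim\|(\nabla\times\wt v^n)\times B^n\|_{\dot B^{\frac12}_{2,1}}\lesssim\|B^n\|_{\dot B^{\frac32}_{2,1}}\|\wt v^n\|_{\dot B^{\frac32}_{2,1}},
\]
whose time integral is dominated by $\|B^n\|_{L^\infty_T(\dot B^{\frac32}_{2,1})}\|\wt v^n\|_{L^2_T(\dot B^{\frac32}_{2,1})}$, uniformly bounded. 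Collecting all contributions gives the uniform bound on $\partial_t\wt U^n$ in $L^2_T(\dot B^{-\frac12}_{2,1})$, and the first paragraph then concludes.
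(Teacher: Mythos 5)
Your proof is correct and takes essentially the same route as the paper: the paper likewise bounds the right-hand side of the system \eqref{4.000c} satisfied by $(\wt u^n,\wt B^n,\wt v^n)$ uniformly in $L^2_T(\dot B^{-\frac12}_{2,1})$, using the uniform bounds \eqref{4.888}--\eqref{4.002} together with the product laws $\|ab\|_{\dot B^{-\frac12}_{2,1}}\lesssim\|a\|_{\dot B^{\frac12}_{2,1}}\|b\|_{\dot B^{\frac12}_{2,1}}$ and $\|ab\|_{\dot B^{\frac12}_{2,1}}\lesssim\|a\|_{\dot B^{\frac12}_{2,1}}\|b\|_{\dot B^{\frac32}_{2,1}}$, and then concludes via H\"older's inequality in time and $\wt U^n(0)=0$, exactly as your Cauchy--Schwarz step. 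Your write-up merely makes explicit two points the paper leaves implicit, namely the interpolation yielding the $L^2_T(\dot B^{\frac32}_{2,1})$ control and the $L^\infty_T(\dot B^{\frac32}_{2,1})$ bound on $B^n$ (through $J^n=u^n-v^n$ and ${\rm curl}^{-1}$) needed for the quasilinear term.
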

\begin{proof}
Observe  that $(\wt u^n, \wt B^n, \wt v^n)$  satisfies
\begin{equation}\label{4.000c}
\left\{
\begin{aligned}
&\partial_t \wt u^n=\Delta\wt  u^n+\mathbb{E}_n\mathcal P(B^n\cdot\nabla B^n-u^n\cdot\nabla u^n),\\
&\partial_t\wt  B^n=\Delta\wt  B^n+\nabla\times\mathbb{E}_n (v^n\times B^n),\\
&\partial_t{\wt  v^n}=\Delta\wt  v^n
+\mathbb{E}_n\cP\Bigl(B^n\cdot\nabla B^n-u^n\cdot\nabla u^n-\nabla\times((\nabla\times v^n)\times B^n)\\
&\hspace{5cm}+\nabla\times(v^n\times u^n)+2\nabla\times(v^n\cdot\nabla B^n)\Bigr)\cdotp
\end{aligned}\right.\end{equation}
According to  the uniform bounds \eqref{4.888}, \eqref{4.002} and to the  product laws:
$$\|ab\|_{\dot{B}^{-\frac12}_{2, 1}}\lesssim\|a\|_{\dot{B}^{\frac12}_{2, 1}}\|b\|_{\dot{B}^{\frac12}_{2, 1}}\andf
\|ab\|_{\dot{B}^{\frac12}_{2, 1}}\lesssim\|a\|_{\dot{B}^{\frac12}_{2, 1}}\|b\|_{\dot{B}^{\frac32}_{2, 1}},$$
 the right-hand side of \eqref{4.000c} is uniformly bounded in $L^2_T(\dot{B}^{-\frac12}_{2, 1}).$ 
 Since $\wt u^n(0)=\wt B^n(0)=\wt b^n(0)=0,$  applying H\"older
 inequality completes the proof. \end{proof}

We can now come to the proof of the existence of a solution. Let $(\phi_{j})_{j\in\mathbb{N}}$ be a sequence of $\cC^\infty_0(\mathbb{R}^3)$ cut-off functions supported in the ball $B(0, j+1)$ of $\mathbb{R}^3$ and equal to $1$ in a neighborhood of $B(0, j).$
 Lemma \ref{Le_4.4} tells us that $(\wt u^n, \wt B^n, \wt v^n)_{n\geq 1}$ is uniformly equicontinuous in the space $\cC([0, T]; \dot{B}^{-\frac12}_{2, 1})$ and \eqref{4.888} ensures that 
 it is bounded in $L^\infty([0,T];\dot B^{\frac12}_{2,1}).$ 
  Using the fact that the application $u\mapsto\phi_{j}u$ is compact from $\dot{B}^{\frac12}_{2, 1}$ into 
  $\dot{B}^{-\frac12}_{2, 1},$  combining  Ascoli's theorem and Cantor's diagonal process ensures that there exists some triplet $(\wt u, \wt B, \wt v)$  such that for all $j\in\mathbb{N},$ 
\begin{equation}\label{4.444}
(\phi_{j}\wt u^n, \phi_{j}\wt B^n, \phi_{j}\wt v^n)\to(\phi_{j}\wt u, \phi_{j}\wt B, \phi_{j}\wt v)\quad {\rm{in}}\quad \cC([0, T]; \dot{B}^{-\frac12}_{2, 1}).
\end{equation}
This obviously entails that $(\wt u^n, \wt B^n, \wt v^n)$ tends to $(\wt u, \wt B, \wt v)$ in $\cD'(\mathbb{R}^+\times\mathbb{R}^3).$
 
 Coming back to the uniform estimates of third step and using the argument of \cite[p. 443]{Ba11}
 to justify that there is no time concentration,
 we  get that $(\wt u, \wt B, \wt v)$ belongs to $L^\infty(0,T;\dot B^{\frac12}_{2, 1})\cap L^1(0,T;\dot B^{\frac52}_{2, 1})$ and to  $\cC^{\frac12}([0, T]; \dot{B}^{-\frac12}_{2, 1}).$
 
Let us now prove that $(u, B, v):=(u^L+\wt u, B^L+\wt B, v^L+\wt v)$ solves \eqref{1.100}. The only problem is to pass to the limit in the non-linear terms. 
By way of example,  let us explain how to handle the term $\mathbb{E}_n\cP\nabla\times((\nabla\times v^n)\times B^n)$ in \eqref{4.000b} (actually, $\cP$ may be omitted
as a curl is divergence free). Let $\theta\in\cC^\infty_0(\mathbb{R}^+\times\mathbb{R}^3;\R^3)$ and $j\in\mathbb{N}$ be such that ${\rm{Supp}}\,\theta\subset[0,\,j]\times B(0, j).$ We use the decomposition
$$\displaylines{
\langle\nabla\times\mathbb{E}_n((\nabla\times v^n)\times B^n),\,\theta\rangle-\langle\nabla\times((\nabla\times v)\times B), \theta\rangle\hfill\cr\hfill
=\langle(\nabla\times v^n)\times \phi_j(B^n-B),\,\nabla\times\mathbb{E}_n\theta\rangle
+\langle(\nabla\times\phi_j(v^n-v))\times B,\,\nabla\times\mathbb{E}_n\theta\rangle\hfill\cr\hfill
+\langle\mathbb{E}_n((\nabla\times v)\times B)-(\nabla\times v)\times B,\,\nabla\times\theta\rangle.}$$
As $\nabla\times v^n$ is uniformly bounded in $L^1_T(\dot{B}_{2, 1}^{\frac32})$ and $\phi_j B^n$ tends to $\phi_j B$ in $L^\infty_T(\dot{B}_{2, 1}^{\frac12})$, the first term tends to $0.$  According to the uniform estimates \eqref{4.002} and \eqref{4.444}, $\nabla\times\phi_j(v^n-v)$ tends to $0$ in $L^1_T(\dot{B}_{2, 1}^{\frac12})$ so that the second term tends to $0$ as well. Finally, thanks to \eqref{4.200},  the third term tends to $0.$ 

The other non-linear terms can be treated similarly, and the continuity of $(u, B, v)$ 
stems from Proposition \ref{Le_27}  since the right-hand side of \eqref{1.100} belongs to $L^1_T(\dot{B}^{\frac12}_{2, 1}).$
\medbreak

\noindent\textbf{Fifth step: uniqueness}

Let $(u_1, B_1)$ and $(u_2, B_2)$ be two solutions of the Hall-MHD 
system on $[0,T]\times\R^3,$ with the same initial data,  and such that
$(u_i,B_i,v_i)\in E_2(T)$ for $i=1,2.$  
Then, the difference $(\du, \dB, \dv):=(u_1-u_2, B_1-B_2, v_1-v_2)$ is in  $E_2(T)$ and satisfies
\begin{equation}\label{4.d1}
\left\{\begin{aligned}
 &\partial_t {\du}-\Delta {\du}:=R_1,\\
 &\partial_t {\dB}-\Delta {\dB}:=R_2,\\
 &\partial_t \dv-\Delta\dv:=R_1+R_3+R_4+ R_5,\\
 %&(\du, \dB, \dv)|_{t=0}{=}0
\end{aligned}
\right.
\end{equation}
where
\begin{align*}
&R_1:=\mathcal{P}(B_1\cdot\nabla\dB+\dB\cdot\nabla B_2-u_1\cdot\nabla\du-\du\cdot\nabla u_2),\\
&R_2:=\nabla\times(v_1\times\dB+\dv\times B_2),\\
&R_3:=-\nabla\times((\nabla\times v_1)\times\dB+(\nabla\times\dv)\times B_2),\\
&R_4:=\nabla\times(v_1\times\du+\dv\times u_2),\\
&R_5:=2\nabla\times(v_1\cdot\nabla\dB+\dv\cdot\nabla B_2).
\end{align*}
Hence, arguing as in the first step of the proof gives  for all $t\,\in[0, T],$
\begin{multline}
\|(\du, \dB, \dv)(t)\|_{\dot{B}^{\frac12}_{2, 1}}+\int_0^t\|(\du, \dB, \dv)\|_{\dot{B}^{\frac52}_{2, 1}}\,d\tau\lesssim\int_0^t\Bigl(\|(R_1, R_2, R_4, R_5)\|_{\dot{B}^{\frac12}_{2, 1}}\\
+\|\nabla\times((\nabla\times v_1)\times\dB)\|_{\dot{B}^{\frac12}_{2, 1}}+
\sum_{j\in\mathbb{Z}}2^{\frac{3j}{2}}\|[\ddj, B_2\times](\nabla\times\dv)\|_{L^2}\,\Bigr)d\tau\label{4.555}.
\end{multline}
Putting together the  product laws \eqref{prod1}
and the commutator estimate \eqref{com00} yields
$$\begin{aligned}
\|R_1\|_{\dot{B}^{\frac12}_{2, 1}}&\lesssim\|(u_1, B_1, u_2, B_2)\|_{\dot{B}^{\frac32}_{2, 1}}\|(\du, \dB)\|_{\dot{B}^{\frac32}_{2, 1}},\\
\|R_2\|_{\dot{B}^{\frac12}_{2, 1}}&\lesssim\|(B_2, v_1)\|_{\dot{B}^{\frac32}_{2, 1}}\|(\dB, \dv)\|_{\dot{B}^{\frac32}_{2, 1}},\\
\|R_4\|_{\dot{B}^{\frac12}_{2, 1}}&\lesssim\|(u_2, v_1)\|_{\dot{B}^{\frac32}_{2, 1}}\|(\du, \dv)\|_{\dot{B}^{\frac32}_{2, 1}},\\
\|R_5\|_{\dot{B}^{\frac12}_{2, 1}}&\lesssim\|(\nabla B_2, v_1)\|_{\dot{B}^{\frac32}_{2, 1}}\|(\nabla\dB, \dv)\|_{\dot{B}^{\frac32}_{2, 1}}\\
&\lesssim \|(u_2, v_1, v_2)\|_{\dot{B}^{\frac32}_{2, 1}}\|(\du, \dv)\|_{\dot{B}^{\frac32}_{2, 1}},\end{aligned}$$
$$\begin{aligned}
\|\nabla\times((\nabla\times v_1)\times\dB)\|_{\dot{B}^{\frac12}_{2, 1}}&\lesssim\|\nabla\times v_1\|_{\dot{B}^{\frac32}_{2, 1}}\|\dB\|_{\dot{B}^{\frac32}_{2, 1}}\\
&\lesssim\|v_1\|_{\dot{B}^{\frac52}_{2, 1}}\|(\du, \dv)\|_{\dot{B}^{\frac12}_{2, 1}}
\end{aligned}$$
and
$$\begin{aligned}
\sum_{j\in\mathbb{Z}}2^{\frac{3j}{2}}\|[\ddj, B_2\times](\nabla\times\dv)\|_{L^2}&\lesssim\|\nabla B_2\|_{\dot{B}^{\frac32}_{2, 1}}\|\nabla\times\dv\|_{\dot{B}^{\frac12}_{2, 1}}\\
&\lesssim\|(u_2, v_2)\|_{\dot{B}^{\frac32}_{2, 1}}\|\dv\|_{\dot{B}^{\frac32}_{2, 1}}\cdotp
\end{aligned}$$
Hence, by interpolation and Young's inequality, Inequality \eqref{4.555} becomes
$$\displaylines{
\|(\du, \dB, \dv)(t)\|_{\dot{B}^{\frac12}_{2, 1}}+\int_0^t\|(\du, \dB, \dv)(\tau)\|_{\dot{B}^{\frac52}_{2, 1}}\,d\tau\\
\leq\int_0^tZ(\tau)\|(\du, \dB, \dv)(\tau)\|_{\dot{B}^{\frac12}_{2, 1}}\,d\tau}$$
with $Z(t):=C\bigl(\|(u_1, u_2, B_1, B_2, v_1, v_2)\|_{\dot{B}^{\frac32}_{2, 1}}^2+\|v_1\|_{\dot{B}^{\frac52}_{2, 1}}\bigr)\cdotp$
\medbreak
Thus, Gronwall lemma and our assumptions on the solutions ensure that 
$$(\du, \dB, \dv)\equiv0\quad{\rm{on}}\quad[0, T].$$
\medbreak

\noindent\textbf{Sixth step: Blow-up criterion}

Let us assume that we are given a solution $(u,B)$ on some \emph{finite} time  interval $[0,T^*)$
fulfilling the regularity properties listed in Theorem \ref{Th_2} for all $t<T^*.$ 
Then, applying the method of  the first step to  \eqref{1.100} yields for all $t<T^*,$
\begin{multline}\label{4.2400}
\|(u, B, v)(t)\|_{\dot B^{\frac{1}{2}}_{2, 1}}
+ C_1\int_0^t\|(u, B, v)\|_{\dot B^{\frac{5}{2}}_{2, 1}}\,d\tau
\leq \|(u, B, v)(0)\|_{\dot B^{\frac{1}{2}}_{2, 1}}\\+ \int_0^t\Bigl(\|B\cdot\nabla B\|_{\dot B^{\frac{1}{2}}_{2, 1}}+\|u\cdot\nabla u\|_{\dot B^{\frac{1}{2}}_{2, 1}}+\bigl(\|v\times B\|_{\dot B^{\frac{3}{2}}_{2, 1}}+\|v\times u\|_{\dot B^{\frac{3}{2}}_{2, 1}}\\+\|v\cdot\nabla B\|_{\dot B^{\frac{3}{2}}_{2, 1}}+\sum_j 2^\frac{3j}{2}\|[\dot\Delta_j, B\times](\nabla\times v)\|_{L^2}\bigr)\Bigr)d\tau.\end{multline}
Using the tame estimates \eqref{eq:tame1}, the fact that $\dot{B}^{\frac{3}{2}}_{2, 1}$ is an algebra embedded in $L^\infty,$
interpolation inequalities and  Young's inequality, we get for all $\eta>0,$
\begin{align*}
\|B\cdot \nabla B\|_{\dot{B}^{\frac{1}{2}}_{2, 1}}&\leq C\|B\otimes B\|_{\dot{B}^{\frac{3}{2}}_{2, 1}}\\
&\leq C\|B\|_{L^\infty}\|B\|_{\dot{B}^{\frac{3}{2}}_{2, 1}}\\
&\leq \frac{C}{\eta}\|B\|_{L^\infty}^2\|B\|_{\dot{B}^{\frac{1}{2}}_{2, 1}}+\eta\|B\|_{\dot{B}^{\frac{5}{2}}_{2, 1}},
\end{align*} 
and, similarly, 
$$
\|u\cdot \nabla u\|_{\dot{B}^{\frac{1}{2}}_{2, 1}}
\leq \frac{C}{\eta}\|u\|_{L^\infty}^2\|u\|_{\dot{B}^{\frac{1}{2}}_{2, 1}}+\eta\|u\|_{\dot{B}^{\frac{5}{2}}_{2, 1}}.
$$
We also have 
\begin{align*}
\|v\times B\|_{\dot{B}^{\frac{3}{2}}_{2, 1}}&\leq C(\|v\|_{L^\infty}\|B\|_{\dot{B}^{\frac{3}{2}}_{2, 1}}+\|B\|_{L^\infty}\|v\|_{\dot{B}^{\frac{3}{2}}_{2, 1}})\\
&\leq \frac{C}{\eta}\|(B, v)\|_{L^\infty}^2\|(B, v)\|_{\dot{B}^{\frac{1}{2}}_{2, 1}}+\eta\|(B, v)\|_{\dot{B}^{\frac{5}{2}}_{2, 1}},\\
\|v\times u\|_{\dot{B}^{\frac{3}{2}}_{2, 1}}
%&\leq C(\|v\|_{L^\infty}\|u\|_{\dot{B}^{\frac{3}{2}}_{2, 1}}+\|u\|_{L^\infty}\|v\|_{\dot{B}^{\frac{3}{2}}_{2, 1}})\\
&\leq \frac{C}{\eta}\|(u, v)\|_{L^\infty}^2\|(u, v)\|_{\dot{B}^{\frac{1}{2}}_{2, 1}}+\eta\|(u,v)\|_{\dot{B}^{\frac{5}{2}}_{2, 1}},\\
\|v\cdot\nabla B\|_{\dot{B}^{\frac{3}{2}}_{2, 1}}
%&\leq C(\|v\|_{L^\infty}\|\nabla B\|_{\dot{B}^{\frac{3}{2}}_{2, 1}}+\|\nabla B\|_{L^\infty}\|v\|_{\dot{B}^{\frac{3}{2}}_{2, 1}})\\
&\leq \frac{C}{\eta}\|(\nabla B, v)\|_{L^\infty}^2\|(\nabla B, v)\|_{\dot{B}^{\frac{1}{2}}_{2, 1}}+\eta\|(\nabla B,v)\|_{\dot{B}^{\frac{5}{2}}_{2, 1}}.
\end{align*}
As, according to  \eqref{com01} with $s=3/2$ and to the fact that $\nabla: L^\infty\to\dot B^{-1}_{\infty,\infty},$  we have 
\begin{equation}\label{eq:com0}
\sum_j 2^\frac{3j}{2}\|[\dot\Delta_j, B\times](\nabla\times v)\|_{L^2}
\leq C\bigl(\|\nabla B\|_{L^\infty}\|v\|_{\dot{B}^{\frac{3}{2}}_{2, 1}}+\|v\|_{L^\infty}\|\nabla B\|_{\dot B^{\frac32}_{2,1}}\bigr),
\end{equation}
that term  may be bounded as $v\cdot\nabla B.$
\medbreak
Therefore,  if we choose $\eta$ small enough, then \eqref{4.2400} becomes: 
$$\displaylines{
\|(u, B, v)(t)\|_{\dot B^{\frac{1}{2}}_{2, 1}}+ \frac{C_1}2\int_0^t\|(u, B, v)\|_{\dot B^{\frac{5}{2}}_{2, 1}}\,d\tau
\leq  \|(u, B, v)(0)\|_{\dot B^{\frac{1}{2}}_{2, 1}}\hfill\cr\hfill
+C\int_0^t \|(u, B, \nabla B)\|_{L^\infty}^2\|(u, B, v)\|_{\dot B^{\frac{1}{2}}_{2, 1}}\,d\tau}$$
and  Gronwall's inequality implies that  for all $t\in[0,T^*),$
$$\displaylines{
\|(u, B, v)(t)\|_{\dot B^{\frac{1}{2}}_{2, 1}}+ \frac{C_1}2\int_0^t\|(u, B, v)\|_{\dot B^{\frac{5}{2}}_{2, 1}}\,d\tau
\hfill\cr\hfill\leq  \|(u, B, v)(0)\|_{\dot B^{\frac{1}{2}}_{2, 1}}\exp\biggl(C\int_0^t\|(u, B,\nabla B)\|_{L^\infty}^2\,dt\biggr)\cdotp}
$$
Now, if one assumes that 
\begin{equation*}\int_0^{T^*}\|(u, B,\nabla B)(t)\|_{L^\infty}^2\,dt<\infty,\end{equation*}
then the above inequality ensures that $(u,B,v)$ belongs
to $L^\infty(0,T^*;\dot B^{\frac12}_{2,1})$
and one may conclude by classical arguments that the solution may be continued beyond~$T^*.$
\medbreak 
 In order to prove the second blow-up criterion, one uses the following 
 inequalities,  based on \eqref{prod1} and interpolation inequalities:
\begin{align*}
\|B\cdot \nabla B\|_{\dot{B}^{\frac{1}{2}}_{2, 1}}&\lesssim \|B\|_{\dot{B}^{\frac{1}{2}}_{2, 1}}\|B\|_{\dot{B}^{\frac{5}{2}}_{2, 1}},\\
\|u\cdot \nabla u\|_{\dot{B}^{\frac{1}{2}}_{2, 1}}&\lesssim \|u\|_{\dot{B}^{\frac{1}{2}}_{2, 1}}\|u\|_{\dot{B}^{\frac{5}{2}}_{2, 1}},\\
\|v\times B\|_{\dot{B}^{\frac{3}{2}}_{2, 1}}&\lesssim \|v\|_{\dot{B}^{\frac{3}{2}}_{2, 1}}\|B\|_{\dot{B}^{\frac{3}{2}}_{2, 1}}\\
%&\lesssim \|v\|_{\dot{B}^{\frac{3}{2}}_{2, 1}}^2+\|B\|_{\dot{B}^{\frac{3}{2}}_{2, 1}}^2\\
&\lesssim \|v\|_{\dot{B}^{\frac{1}{2}}_{2, 1}}\|v\|_{\dot{B}^{\frac{5}{2}}_{2, 1}}+\|B\|_{\dot{B}^{\frac{1}{2}}_{2, 1}}\|B\|_{\dot{B}^{\frac{5}{2}}_{2, 1}},\\
\|v\times u\|_{\dot{B}^{\frac{3}{2}}_{2, 1}}&\lesssim\|v\|_{\dot{B}^{\frac{3}{2}}_{2, 1}}\|u\|_{\dot{B}^{\frac{3}{2}}_{2,1}}\\
%&\lesssim \|v\|_{\dot{B}^{\frac{3}{2}}_{2, 1}}^2+\|u\|_{\dot{B}^{\frac{3}{2}}_{2, 1}}^2\\
&\lesssim \|v\|_{\dot{B}^{\frac{1}{2}}_{2, 1}}\|v\|_{\dot{B}^{\frac{5}{2}}_{2, 1}}+\|u\|_{\dot{B}^{\frac{1}{2}}_{2, 1}}\|u\|_{\dot{B}^{\frac{5}{2}}_{2, 1}},\\
\|v\cdot\nabla B\|_{\dot{B}^{\frac{3}{2}}_{2, 1}}&\lesssim\|v\|_{\dot{B}^{\frac{3}{2}}_{2, 1}}\|\nabla B\|_{\dot{B}^{\frac{3}{2}}_{2, 1}}\\
&\lesssim \|v\|_{\dot{B}^{\frac{3}{2}}_{2, 1}}^2+\|J\|_{\dot{B}^{\frac{3}{2}}_{2, 1}}^2\\
&\lesssim \|v\|_{\dot{B}^{\frac{3}{2}}_{2, 1}}^2+\|u\|_{\dot{B}^{\frac{3}{2}}_{2, 1}}^2\\
&\lesssim \|v\|_{\dot{B}^{\frac{1}{2}}_{2, 1}}\|v\|_{\dot{B}^{\frac{5}{2}}_{2, 1}}+\|u\|_{\dot{B}^{\frac{1}{2}}_{2, 1}}\|u\|_{\dot{B}^{\frac{5}{2}}_{2, 1}}
\end{align*}
and by \eqref{com00} and  Proposition \ref{P_25} {\it (iii)} {\it (vii)},
$$\begin{aligned}
\sum_j 2^\frac{3j}{2}\|[\dot\Delta_j, B\times](\nabla\times v)\|_{L^2}
&\lesssim \|\nabla B\|_{\dot{B}^{\frac{3}{2}}_{2, 1}}\|v\|_{\dot{B}^{\frac{3}{2}}_{2, 1}}\\
&\lesssim \|v\|_{\dot{B}^{\frac{1}{2}}_{2, 1}}\|v\|_{\dot{B}^{\frac{5}{2}}_{2, 1}}+\|u\|_{\dot{B}^{\frac{1}{2}}_{2, 1}}\|u\|_{\dot{B}^{\frac{5}{2}}_{2, 1}}.\end{aligned}$$
Plugging those estimates in \eqref{4.2400}, we find  that 
$$\displaylines{
\|(u, B, v)(t)\|_{\dot B^{\frac{1}{2}}_{2, 1}}+ C_1\int_0^t\|(u, B, v)\|_{\dot B^{\frac{5}{2}}_{2, 1}}\,d\tau \leq \|(u, B, v)(0)\|_{\dot B^{\frac{1}{2}}_{2, 1}}\hfill\cr\hfill
+\int_0^t\|(u, B, v)\|_{\dot B^{\frac{1}{2}}_{2, 1}}\|(u, B, v)\|_{\dot B^{\frac{5}{2}}_{2, 1}}\,d\tau.}
$$
Hence, if 
\begin{equation*}
\int_0^{T^*}\|(u, B, J)\|_{\dot B^{\frac{5}{2}}_{2, 1}}\,dt <\infty,
\end{equation*}
then the solution may be continued beyond $T^*.$ 
\medbreak

 For proving  the last blow-up criterion,  one can use that for   $\rho\in(2, \infty]$,  most of the terms of \eqref{4.2400} 
 may be bounded by means of Inequality \eqref{prod2}. 
 The last commutator term may be bounded from \eqref{com2} (without time integration)
 with $r=1$ and $s=3/2$ as follows:
$$\begin{aligned}
\sum_j 2^\frac{3j}{2}\|[\dot\Delta_j, B\times](\nabla\times v)\|_{L^2}
&\lesssim \|\nabla B\|_{\dot{B}^{\frac{2}{\rho}-1}_{\infty, \infty}}\|v\|_{\dot{B}^{\frac{5}{2}-\frac{2}{\rho}}_{2, 1}}+\|v\|_{\dot{B}^{\frac{2}{\rho}-1}_{\infty, \infty}}\|\nabla B\|_{\dot{B}^{\frac{5}{2}-\frac{2}{\rho}}_{2, 1}}.\end{aligned}$$
Since, by interpolation, we have 
$$\|Z\|_{\dot{B}^{\frac{5}{2}-\frac{2}{\rho}}_{2, 1}}\lesssim\|Z\|_{\dot{B}^{\frac{1}{2}}_{2, 1}}^{\frac{1}{\rho}}\|Z\|_{\dot{B}^{\frac{5}{2}}_{2, 1}}^{\frac{1}{\rho'}}\with \frac{1}{\rho'}=1-\frac{1}{\rho},$$
 using Young inequality and reverting to \eqref{4.2400} yields
$$\displaylines{
\|(u, B, v)(t)\|_{\dot B^{\frac{1}{2}}_{2, 1}}+\int_0^t\|(u, B, v)\|_{\dot B^{\frac{5}{2}}_{2, 1}}\,d\tau \leq \|(u, B, v)(0)\|_{\dot B^{\frac{1}{2}}_{2, 1}}\hfill\cr\hfill
+C\int_0^t\|(u, B, v)\|_{\dot B^{\frac{2}{\rho}-1}_{\infty, \infty}}^\rho\|(u, B, v)\|_{\dot B^{\frac{1}{2}}_{2, 1}}\,d\tau.}
$$
As before, one can conclude that
if $T^*<\infty$ and \eqref{blowup3} is fulfilled, 
%\begin{equation*}\int_0^{T^*}\|(u, B, v)\|_{\dot B^{\frac{2}{\rho}-1}_{\infty, \infty}}^\rho\,dt <\infty,\end{equation*}
then the solution may be continued beyond $T^*.$ 
This completes  the proof of the theorem.\hfill\quad$\square$

%\noindent It remains to prove Inequality \eqref{4.2600}, since $\frac{2}{\rho}-1<0,$ one may resort to the Bony decomposition and write according to Proposition \ref{P_para},
%\begin{align*}
%\|T_ab||_{\dot{B}^{\frac32}_{2, 1}}\lesssim\|a\|_{\dot{B}^{\frac{2}{\rho}-1}_{\infty, \infty}}\|b||_{\dot{B}^{\frac{5}{2}-\frac{2}{\rho}}_{2, 1}},
%\end{align*}
%$$\|T_ba||_{\dot{B}^{\frac32}_{2, 1}}\lesssim\|b\|_{\dot{B}^{\frac{2}{\rho}-1}_{\infty, \infty}}\|a||_{\dot{B}^{\frac{5}{2}-\frac{2}{\rho}}_{2, 1}},$$
%$$\|R(a, b)\|_{\dot{B}^{\frac32}_{2, 1}}\lesssim\|a\|_{\dot{B}^{\frac{2}{\rho}-1}_{\infty, \infty}}\|b||_{\dot{B}^{\frac{5}{2}-\frac{2}{\rho}}_{2, 1}}.$$

%%%%%%%%%%%%%%%%%%%%%%%%%%%%%%%%%%%%%%

\section{The well-posedness theory  in spaces  \texorpdfstring{$\dot B^{\frac12}_{2,r}$}{TEXT}
for general  \texorpdfstring{$r$}{TEXT}} \label{s:r}

Let us first prove the a priori estimates leading to global existence. 
\begin{proposition}\label{P_5a}
Assume that $(u,B)$ is a smooth solution of the Hall-MHD system on $[0, T]\times\mathbb{R}^3$ with $h=\mu=\nu=1.$ Let $v:=u-\nabla\times B.$ There exists a universal constant $C$ such that for any $r\in[1, \infty]$, we have
\begin{equation}\label{5.00a}
\|(u, B, v)\|_{E_{2, r}(T)}\leq  C\bigl(\|(u_0, B_0, v_0)\|_{\dot{B}^{\frac12}_{2, r}}+\|(u, B, v)\|_{E_{2, r}(T)}^2\bigr)\cdotp
\end{equation}
\end{proposition}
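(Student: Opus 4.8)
The plan is to reproduce the energy argument of the first step of the proof of Proposition \ref{P_41}, but carried out in the Chemin–Lerner ``tilde'' spaces of Definition \ref{d:tilde} so as to accommodate the third index $r>1$, and \emph{without} peeling off the free evolution, since here we want a genuine a priori bound on $(u,B,v)$ itself. Concretely, I would apply $\dot\Delta_j$ to each line of the $(u,B,v)$ formulation \eqref{1.100} with $\mu=\nu=h=1$ and take the $L^2$ inner product of the localized equations with $\dot\Delta_j u$, $\dot\Delta_j B$ and $\dot\Delta_j v$ respectively. As in Proposition \ref{P_41}, the only quasilinear term, $\nabla\times((\nabla\times v)\times B)$, is treated through the splitting
\begin{equation*}
\nabla\times\dot\Delta_j\bigl((\nabla\times v)\times B\bigr)
=\nabla\times\bigl([\dot\Delta_j,B\times](\nabla\times v)\bigr)
+\nabla\times\bigl(B\times\dot\Delta_j(\nabla\times v)\bigr),
\end{equation*}
whose second term is $L^2$-orthogonal to $\dot\Delta_j v$ by the cancellation \eqref{1.1100}; only the commutator survives. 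After integration in time and use of Bernstein's inequalities, this yields, for each $j\in\Z$, a bound on $\|\dot\Delta_j(u,B,v)(t)\|_{L^2}+c\,2^{2j}\int_0^t\|\dot\Delta_j(u,B,v)\|_{L^2}\,d\tau$ by $\|\dot\Delta_j(u_0,B_0,v_0)\|_{L^2}$ plus the time integral of the localized nonlinear contributions, just as in \eqref{4.5}.

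The decisive departure from the case $r=1$ occurs at the summation stage. After multiplying the per-block inequality by $2^{j/2}$, instead of summing in $\ell^1(\Z)$ I would take the $\ell^r(\Z)$ norm \emph{with the time integration kept inside}, which is precisely the definition of the norms $\wt L^\infty_T(\dot B^{1/2}_{2,r})$ and $\wt L^1_T(\dot B^{5/2}_{2,r})$ that make up $\|(u,B,v)\|_{E_{2,r}(T)}$. The data term then contributes $\|(u_0,B_0,v_0)\|_{\dot B^{1/2}_{2,r}}$, giving the first term on the right-hand side of \eqref{5.00a}.

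It remains to bound the nonlinear terms — $B\cdot\nabla B$ and $u\cdot\nabla u$ in $\wt L^1_T(\dot B^{1/2}_{2,r})$, and $v\times B$, $v\times u$, $v\cdot\nabla B$ together with the commutator $\sum_j 2^{3j/2}\|[\dot\Delta_j,B\times](\nabla\times v)\|_{L^2}$ in $\wt L^1_T(\dot B^{3/2}_{2,r})$ — by $C\|(u,B,v)\|_{E_{2,r}(T)}^2$. Here I would invoke the tilde-space counterparts of the product law \eqref{prod1} and of the commutator estimate \eqref{com01}, splitting the time integrability by Hölder so that each factor ends up measured in $\wt L^2_T(\dot B^{3/2}_{2,r})$. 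Since $\dot B^{3/2}_{2,r}$ is the interpolant of $\dot B^{1/2}_{2,r}$ and $\dot B^{5/2}_{2,r}$ at exponent $1/2$, one has
\begin{equation*}
\|z\|_{\wt L^2_T(\dot B^{3/2}_{2,r})}\lesssim
\|z\|_{\wt L^\infty_T(\dot B^{1/2}_{2,r})}^{1/2}\,\|z\|_{\wt L^1_T(\dot B^{5/2}_{2,r})}^{1/2}
\lesssim\|z\|_{E_{2,r}(T)},
\end{equation*}
so that every quadratic term is controlled by $\|(u,B,v)\|_{E_{2,r}(T)}^2$, and \eqref{5.00a} follows. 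Because the cancellation is exact and the product and commutator constants can be taken independent of $r$ and $T$, the resulting constant $C$ is indeed universal.

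I expect the main obstacle to be precisely this last step for $r>1$: the $\ell^1$-summability that makes $\dot B^{3/2}_{2,1}$ an algebra and renders the $r=1$ products harmless is lost, so the critical-regularity products and — more delicately — the commutator term must be estimated directly within the Chemin–Lerner framework, keeping one factor at third index $1$ where necessary and matching the time exponents by Hölder. Reassuringly, the crucial algebraic cancellation \eqref{1.1100} of the Hall contribution (inherited from $\mu=\nu$) survives the dyadic localization up to the commutator, so no new structural difficulty beyond the $r=1$ case arises; the work is purely in transplanting the product and commutator estimates to the tilde spaces.
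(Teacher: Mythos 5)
Your overall architecture is exactly the paper's: apply $\dot\Delta_j$ to \eqref{1.100}, run the $L^2$ energy method on each block, kill the quasilinear term via the cancellation \eqref{1.1100} up to the commutator $[\dot\Delta_j,B\times](\nabla\times v)$, multiply by $2^{j/2}$, and take the $\ell^r(\Z)$ norm with the time integration kept inside so that the left-hand side is the $E_{2,r}(T)$ norm and the data contribute $\|(u_0,B_0,v_0)\|_{\dot B^{1/2}_{2,r}}$. The interpolation inequality $\|z\|_{\wt L^2_T(\dot B^{3/2}_{2,r})}\lesssim\|z\|_{E_{2,r}(T)}$ that you state is also correct, and is used in the paper. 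The gap is in the step where you close the quadratic estimates.

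Your plan to split every nonlinearity symmetrically, with each factor in $\wt L^2_T(\dot B^{3/2}_{2,r})$, requires at each time the endpoint product law $\dot B^{3/2}_{2,r}\cdot\dot B^{3/2}_{2,r}\to\dot B^{3/2}_{2,r}$ (equivalently $\dot B^{3/2}_{2,r}\cdot\dot B^{1/2}_{2,r}\to\dot B^{1/2}_{2,r}$ for terms like $B\cdot\nabla B$), and this is \emph{false} for $r>1$: $\dot B^{3/2}_{2,r}$ is then neither an algebra nor embedded in $L^\infty$, and in Bony's decomposition the paraproduct $T_B(\nabla B)$ needs either $\|B\|_{L^\infty}$ or $\|B\|_{\dot B^{t}_{\infty,\infty}}$ with $t<0$ — Proposition \ref{P_para} excludes $t=0$, and a logarithmically unbounded $B\in\dot B^{3/2}_{2,\infty}$ shows the endpoint estimate genuinely fails, not merely that the standard proof breaks. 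Varying the time exponents cannot repair this, since the failure is spatial at fixed time. The same problem hits your commutator step even harder: the right-hand side of \eqref{com01} involves $\|\nabla b\|_{L^\infty}$ and $\|\nabla b\|_{\dot B^{s}_{2,1}}$ (third index $1$), neither of which is controlled by $\|(u,B,v)\|_{E_{2,r}(T)}$; and your hedge of ``keeping one factor at third index $1$'' is not available, because every factor in every quadratic term is a component of the solution, known only with third index $r$. The paper's resolution is an \emph{asymmetric} Hölder split: the products are bounded via \eqref{5.p1} and \eqref{5.p2}, with one factor in $\wt L^4_T(\dot B^{1}_{2,r})$ and the other in $\wt L^{4/3}_T(\dot B^{2}_{2,r})$ — both regularities lying strictly inside $(\tfrac12,\tfrac52)$, so that after embedding into $\infty$-based spaces the paraproduct indices are $-\tfrac12$ and $-\tfrac32$, strictly negative — and the commutator is bounded by the $L^\infty$-free estimate \eqref{com2} with $\rho=4$, $s=3/2$, not by a tilde version of \eqref{com01}. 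Since both auxiliary norms $\wt L^4_T(\dot B^{1}_{2,r})$ and $\wt L^{4/3}_T(\dot B^{2}_{2,r})$ are controlled by $E_{2,r}(T)$ through exactly the interpolation you invoked (with $\rho=4$ and $\rho=4/3$ in place of $\rho=2$), once these tools are substituted for your symmetric split the rest of your argument closes as written.
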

\begin{proof}
We argue as in the proof of Inequality \eqref{4.2400}, but  take
the  $\ell^r(\Z)$ norm instead of the $\ell^1(\Z)$ norm. We get for all $t\in[0,T],$
$$\displaylines{
\|(u,B,v)\|_{\wt L^\infty_t(\dot B^{\frac12}_{2,r})}+
\|(u,B,v)\|_{\wt L^1_t(\dot B^{\frac52}_{2,r})}\lesssim \|(u_0,B_0,v_0)\|_{\dot B^{\frac12}_{2,r}}
+ \|B\cdot\nabla B\|_{\wt L_t^1(\dot B^{\frac12}_{2,r})}  \hfill\cr\hfill
+ \|u\cdot\nabla u\|_{\wt L_t^1(\dot B^{\frac12}_{2,r})} 
 + \|v\cdot\nabla B\|_{\wt L_t^1(\dot B^{\frac12}_{2,r})}  + \|B\cdot\nabla v\|_{\wt L_t^1(\dot B^{\frac12}_{2,r})} 
+ \|v\cdot\nabla u\|_{\wt L_t^1(\dot B^{\frac12}_{2,r})} 
\hfill\cr\hfill + \|u\cdot\nabla v\|_{\wt L_t^1(\dot B^{\frac12}_{2,r})}  
+ \|v\cdot\nabla B\|_{\wt L_t^1(\dot B^{\frac32}_{2,r})}  
+\bigl\|2^{\frac{3j}2}\| [\dot\Delta_j, B\times](\nabla\times v)\|_{L^1_t(L^2)}\bigr\|_{\ell^r(\Z)}.}
$$
The first six nonlinear  terms in the right-hand side may be bounded according to 
the following  product law that is proved in Appendix: 
\begin{equation}\label{5.p1}\|a\,b \|_{\wt L_t^1(\dot B^{\frac12}_{2,r})}  \lesssim \|a\|_{\wt L^{4}_t(\dot B^1_{2,r})}
\|b\|_{\wt L^{\frac43}_t(\dot B^1_{2,r})}.\end{equation}
The last but one term may be bounded as follows: 
\begin{equation}\label{5.p2}
\|v\cdot\nabla B\|_{\wt L_t^1(\dot B^{\frac32}_{2,r})}\lesssim  \|v\|_{\wt L_t^4(\dot B^{1}_{2,r})}\|\nabla B\|_{\wt L^{\frac43}_t(\dot B^2_{2,r})}
+ \|\nabla B\|_{\wt L_t^4(\dot B^{1}_{2,r})}\|v\|_{\wt L^{\frac43}_t(\dot B^2_{2,r})}.\end{equation}
Finally,  in light of  \eqref{com2} with  $b=B$, $a=\nabla\times v,$ $s=3/2$ and $\rho=4,$
 and embedding, one discovers that 
the commutator term may be bounded exactly as $v\cdot\nabla B.$
\medbreak
Putting together all the above inequalities eventually yields for all $t\geq0,$ 
\begin{equation} \label{5.2}
\|(u,B,v)\|_{E_{2,r}(t)}\lesssim  \|(u_0,B_0,v_0)\|_{\dot B^{\frac12}_{2,r}}
+\|(u,B,v)\|_{\wt L_t^{\frac43}(\dot B^2_{2,r})} \|(u,B,v)\|_{\wt L_t^{4}(\dot B^1_{2,r})}. 
%+  \|\nabla B\|_{L_t^r(\dot B^{\frac12+\frac2r}_{2,r})} \|v\|_{\wt L^{r'}_t(\dot B^{\frac52-\frac2r}_{2,r})}.
\end{equation}
  Since one can prove by making use of H\"older inequality and interpolation that
$$\|z\|_{\wt L^\rho_t(\dot B^{\frac12+\frac2\rho}_{2,r})} \leq \|z\|_{E_{2,r}(t)}\quad\hbox{for all }\ \rho\in[1,+\infty], $$
Inequality \eqref{5.2} implies \eqref{5.00a}.\end{proof}
%\medbreak Now, it is clear that the above proposition ensures that $$
%\|(u,B,v)\|_{E_{2,r}(t)}\leq C \|(u_0,B_0,v_0)\|_{\dot B^{\frac12}_{2,r}},$$ if $\|(u_0,B_0,v_0)\|_{\dot B^{\frac12}_{2,r}}$ is small enough.
\bigbreak

In order to prove Theorem \ref{Th_2bis}, we proceed as follows:
\begin{enumerate}
\item   smooth out the data and get a sequence  $(u^n, B^n)_{n\in\N}$
 of global smooth solutions to the Hall-MHD system;
\item  apply Proposition \ref{P_5a} to $(u^n, B^n)_{n\in\N}$ and obtain  uniform estimates for 
$(u^n, B^n, v^n)_{n\in\N}$ in the space $E_{2, r}$;
\item  use compactness to prove that  $(u^n, B^n)_{n\in\N}$ converges, up to extraction, 
to a solution of the Hall-MHD
system supplemented with initial data $(u_0,B_0)$;
\item prove stability estimates \emph{in a larger space} to get the uniqueness of the solution.
\end{enumerate}
\medbreak
To proceed, let us  smooth out the initial data as follows\footnote{The reader
may refer to the appendix for the definition of $\dot S_j$}:
\begin{equation*}
u^n_0 :=(\dot{S}_n-\dot S_{-n}) u_0\andf  B^n_0 :=(\dot{S}_n-\dot S_{-n}) B_0.
\end{equation*}
Clearly, $u^n_0$ and $B^n_0$ belong to all Sobolev spaces, and we have for 
$z=u,B,v$ and all $n\in\N,$
\begin{equation}\label{eq:bounddata}
\forall j\in\mathbb{Z}, \quad\|\ddj z^n_0\|_{L^2}\leq\|\ddj z_0\|_{L^2}\andf 
\|z^n_0\|_{\dot B^{\frac12}_{2,r}}\leq \|z_0\|_{\dot{B}^{\frac12}_{2, r}}.\end{equation}
Since in particular $(u_0^n,B_0^n,v_0^n)$ is in $\dot B^{\frac12}_{2,1},$ Theorem 
\ref{Th_2} guarantees that the Hall-MHD system with data $(u_0^n,B_0^n)$ 
has a unique maximal solution on $[0,T^n)$ for some $T^n>0,$ that belongs
to $E_{2,1}(T)$ for all $T<T^n.$
Now, take some positive real number  $M$ to be chosen later on and define
$$T_n := {\rm{sup}}\bigl\{t\in[0,T^n)\,,\, \|(u^n, B^n, v^n)\|_{E_{2, r}(t)}\leq Mc\bigr\}\cdotp$$
We are going to show first that $T_n=T^n,$ then that $T^n=+\infty.$ 
\smallbreak
According to Proposition \ref{P_5a} and to \eqref{eq:bounddata}, we have 
$$\|(u^n, B^n, v^n)\|_{E_{2, r}(T_n)}\leq C\bigl(\|(u_0,B_0,v_0)\|_{\dot B^{\frac12}_{2,r}}
+ \|(u^n, B^n, v^n)\|_{E_{2, r}(T_n)}^2\bigr)\cdotp$$
Hence, using the smallness condition on $(u_0,B_0,v_0)$ and the definition of $T_n,$
$$\|(u^n, B^n, v^n)\|_{E_{2, r}(T_n)}\leq Cc(1+M^2c).$$
If we take $M=2C,$ then $c$ so that $4C^2c<1,$ then we have
$$\|(u^n, B^n, v^n)\|_{E_{2, r}(T_n)}<Mc,$$
and thus, by a classical continuity argument, $T_n=T^n.$ 
\medbreak
Now, using functional embedding and interpolation arguments, we discover that
$$\biggl(\int_0^{T^n} \!\|(u^n,B^n,v^n)\|^4_{\dot B^{-\frac12}_{\infty,\infty}}dt\biggr)^{\frac14}
\lesssim \|(u^n,B^n,v^n)\|_{\wt L_{T^n}^4(\dot B^1_{2,r})} 
\lesssim \|(u^n, B^n, v^n)\|_{E_{2, r}(T^n)}.$$
Hence, the continuation criterion \eqref{blowup3} guarantees that, indeed, $T^n=+\infty.$
This means that the solution is global and that, furthermore, 
\begin{equation}\label{5.220}
\|(u^n, B^n, v^n)\|_{E_{2, r}}\leq Mc\quad\hbox{for all }\ n\in\mathbb{N}.\end{equation} 
At this stage, proving that $(u^n,B^n)_{n\in\N}$ converges (up to subsequence)  to a global solution $(u,B)$ of the Hall-MHD system with data $(u_0,B_0)$
and $(u,B,v)$ in $E_{2,r}$  follows from the same arguments 
as in the previous section. 
\medbreak 
Let us finally prove the uniqueness part of the theorem. Suppose that $(u_1, B_1)$ and $(u_2, B_2)$ are two solutions of the Hall-MHD system on $[0, T]\times\mathbb{R}^3$ supplemented with the same initial data $(u_0, B_0)$ and such that
$$(u_i,B_i,v_i)\in \wt\cC([0,T];\dot B^{\frac12}_{2, r})\cap \wt L^1(0,T;\dot B^{\frac52}_{2, r}),\quad i=1,2.$$  
In order to prove the uniqueness, we look at  the difference 
$(\du, \dB, \dv)=(u_1-u_2, B_1-B_2, v_1-v_2)$ as a solution of  System \eqref{4.d1}.
In contrast with the previous section however, we do not
know how to estimate the difference  in the space $E_{2, r}(T)$ since 
  the term $\nabla\times((\nabla\times v_1)\times\dB)$ 
cannot be bounded in the space $\wt L_T^1(\dot B^{\frac12}_{2,r})$ from 
the norm of $v_1$ and $\dB$ in  $E_{2, r}(T)$  (this is  
due to the fact that the norm of  $E_{2, r}(T)$ fails to  control $\|\cdot\|_{L^\infty(0,T\times\R^3)}$ 
by a little if $r>1$). 

For that reason, we shall accept to lose some regularity in the stability estimates
and prove uniqueness in the space
 $$F_{2, r}(T):=\wt L^\infty_T(\dot{B}^{-\frac12}_{2, r}).$$
We need first to justify  that $(\du,\dB,\dv)$  belongs to that space, though.
According  to Proposition  \ref{Le_27}, 
it is enough to check that the terms  $R_1$ to $R_5$ defined 
just below \eqref{4.d1} belong to  $\wt L^1_T(\dot{B}^{-\frac12}_{2, r}).$
Now, from  \eqref{5.p1} and Holder inequality, we have
\begin{align*}
&\|R_1\|_{\wt L^1_T(\dot{B}^{-\frac12}_{2, r})}\lesssim T^{\frac12}\|(u_1, B_1, u_2, B_2)\|_{\wt L^4_T(\dot{B}^{1}_{2, r})}\|(\du, \dB)\|_{\wt L^4_T(\dot{B}^{1}_{2, r})},\\
&\|R_2\|_{\wt L^1_T(\dot{B}^{-\frac12}_{2, r})}\lesssim T^{\frac12}\|(B_2, v_1)\|_{\wt L^4_T(\dot{B}^{1}_{2, r})}\|(\dB, \dv)\|_{\wt L^4_T(\dot{B}^{1}_{2, r})},\\
&\|R_3\|_{\wt L^1_T(\dot{B}^{-\frac12}_{2, r})}\lesssim\|(\nabla v_1, \nabla\dv)\|_{\wt L^\frac43_T(\dot{B}^{1}_{2, r})}\|(\dB, B_2)\|_{\wt L^4_T(\dot{B}^{1}_{2, r})},\\
&\|R_4\|_{\wt L^1_T(\dot{B}^{-\frac12}_{2, r})}\lesssim T^{\frac12}\|(u_2, v_1)\|_{\wt L^4_T(\dot{B}^{1}_{2, r})}\|(\du, \dv)\|_{\wt L^4_T(\dot{B}^{1}_{2, r})},\\
&\|R_5\|_{\wt L^1_T(\dot{B}^{-\frac12}_{2, r})}\lesssim\|(\nabla B_2, \nabla\dB)\|_{\wt L^\frac43_T(\dot{B}^{1}_{2, r})}\|(\dv, v_1)\|_{\wt L^4_T(\dot{B}^{1}_{2, r})}.
\end{align*}
Since the norm in  $E_{2, r}(T)$ bounds the norm  in  $\wt L^4_T(\dot{B}^{1}_{2, r})\cap\wt L^\frac43_T(\dot{B}^{2}_{2, r}),$  one can indeed conclude that  the  terms  $R_1$ to $R_5$  are in $\wt L^1_T(\dot{B}^{-\frac12}_{2, r}).$
\medbreak
 Next,  estimating  $(\du, \dB, \dv)$ in  $F_{2, r}(T)$
 may be achieved by a slight modification of the beginning of the proof of 
 Proposition \ref{P_5a}. We get for all $t\in[0, T],$
\begin{align*}
&\|(\du, \dB, \dv)\|_{F_{2, r}(t)} 
\lesssim\|B_1\cdot\nabla\dB\|_{\wt L_t^1(\dot B^{-\frac12}_{2,r})}
+\|\dB\cdot\nabla B_2\|_{\wt L_t^1(\dot B^{-\frac12}_{2,r})} \\
 &\hspace{1cm}+\|u_1\cdot\nabla\du\|_{\wt L_t^1(\dot B^{-\frac12}_{2,r})}  
+ \|\du\cdot\nabla u_2\|_{\wt L_t^1(\dot B^{-\frac12}_{2,r})}
+ \|v_1\cdot\nabla\dB\|_{\wt L_t^1(\dot B^{-\frac12}_{2,r})}\\
  &\hspace{1cm}+\|\dB\cdot\nabla v_1\|_{\wt L_t^1(\dot B^{-\frac12}_{2,r})}
 + \|B_2\cdot\nabla\dv\|_{\wt L_t^1(\dot B^{-\frac12}_{2,r})} 
+ \|\dv\cdot\nabla B_2\|_{\wt L_t^1(\dot B^{-\frac12}_{2,r})} \\
&\hspace{1cm}+ \|v_1\cdot\nabla\du\|_{\wt L_t^1(\dot B^{-\frac12}_{2,r})}  
+ \|\du\cdot\nabla v_1\|_{\wt L_t^1(\dot B^{-\frac12}_{2,r})}  
+ \|u_2\cdot\nabla\dv\|_{\wt L_t^1(\dot B^{-\frac12}_{2,r})} \\
&\hspace{1cm}+ \|\dv\cdot\nabla u_2\|_{\wt L_t^1(\dot B^{-\frac12}_{2,r})}  
+\|(\nabla\times v_1)\times\dB\|_{\wt L_t^1(\dot B^{\frac12}_{2,r})}  
+\|v_1\cdot\nabla\dB\|_{\wt L_t^1(\dot B^{\frac12}_{2,r})}  \\
&\hspace{1cm}+\|\dv\cdot\nabla B_2\|_{\wt L_t^1(\dot B^{\frac12}_{2,r})}  
+\bigl\|2^{\frac{j}2}\| [\dot\Delta_j, B_2\times](\nabla\times \dv)\|_{L^1_t(L^2)}\bigr\|_{\ell^r(\Z)}.
\end{align*}
Most of the terms on the right-hand side can be bounded by means of  the following
inequalities that are proved in appendix:
\begin{equation}\label{5.p11}
\|a b\|_{\wt L_t^1(\dot B^{-\frac12}_{2, r})}\lesssim\|a\|_{\wt L_t^4(\dot B^{1}_{2,r})}\|b\|_{\wt L_t^\frac43(\dot B^{0}_{2,r})},
\end{equation}
\begin{equation}\label{5.p22}
\|a b\|_{\wt L_t^1(\dot B^{-\frac12}_{2, r})}\lesssim\|a\|_{\wt L_t^\frac43(\dot B^{1}_{2,r})}\|b\|_{\wt L_t^4(\dot B^{0}_{2,r})}.
\end{equation}
Next,  owing  to Inequality \eqref{5.p1} and interpolation, we have
$$\begin{aligned}
\|(\nabla\times v_1)\times\dB&\|_{\wt L_t^1(\dot B^{\frac12}_{2,r})}
+\|v_1\cdot\nabla\dB\|_{\wt L_t^1(\dot B^{\frac12}_{2,r})}  
+\|\dv\cdot\nabla B_2\|_{\wt L_t^1(\dot B^{\frac12}_{2,r})}\\
&\lesssim \|\dB\|_{\wt L_t^4(\dot B^{1}_{2,r})}\|\nabla v_1\|_{\wt L_t^{\frac43}(\dot B^{1}_{2,r})}+\|\nabla\dB\|_{\wt L_t^{\frac43}(\dot B^{1}_{2,r})}\|v_1\|_{\wt L_t^{4}(\dot B^{1}_{2,r})}\\
&\hspace{5.6cm}+\|\dv\|_{\wt L_t^{\frac43}(\dot B^{1}_{2,r})}\|\nabla B_2\|_{\wt L_t^{4}(\dot B^{1}_{2,r})}\\
&\lesssim \bigl(\|(u_2, v_2, v_1)\|_{\wt L_t^{4}(\dot B^{1}_{2,r})}+\|v_1\|_{\wt L_t^{\frac43}(\dot B^{2}_{2,r})}\bigr)\|(\du, \dv)\|_{F_{2, r}(t)}.
\end{aligned}$$
Finally, applying \eqref{com2} with $\rho=4,$ $s=1/2$ and using 
 embeddings $\dot B^0_{2,r}\hookrightarrow \dot B^{-\frac32}_{\infty,\infty}$
and $\dot B^1_{2,r}\hookrightarrow \dot B^{-\frac12}_{\infty,\infty}$ yields
\begin{align*}
\bigl\|2^{\frac{j}2}\| [\dot\Delta_j, B_2\times](\nabla\times \dv)\|_{L^1_t(L^2)}\bigr\|_{\ell^r(\Z)}
\lesssim \|\nabla B_2\|_{L_t^4(\dot B^{1}_{2, r})} \|\nabla\times\dv\|_{\wt L_t^{\frac43}(\dot B^{0}_{2,r})}.
\end{align*}
Thus, one can conclude that
\begin{align*}
&\hspace{1cm}\|(\du, \dB, \dv)\|_{F_{2, r}(t)}\leq Y(t)\|(\du, \dB, \dv)\|_{F_{2, r}(t)}
\end{align*}
with $Y(t) :=\sum_{i=1, 2}\|(u_i, B_i, v_i)\|_{\wt L_t^{4}(\dot B^{1}_{2,r})}+\|v_1\|_{\wt L_t^{\frac43}(\dot B^{2}_{2,r})}.$

\noindent Now, Lebesgue dominated convergence theorem ensures that $Y$ is a continuous nondecreasing function vanishing at zero. Hence $(\du, \dB, \dv)\equiv 0$ in $\wt L^\infty_t(\dot{B}^{-\frac12}_{2, r})\cap\wt L^1_t(\dot{B}^{\frac32}_{2, r})$ for small enough $t$. Combining with a standard connectivity argument 
allows to conclude that  $(\du, \dB, \dv)\equiv 0$ on $\mathbb{R}^+.$
This completes the proof of the theorem in the small data case.\hfill\quad$\square$
\medbreak
Let us briefly explain how the above arguments have 
to be modified so as to handle the case where only $v_0$ is small. 
Note that no smallness condition is needed whatsoever in the proof of uniqueness.
As regards the existence part, we split $u$ and $B$ (not $v$) into 
$u=u^L+\wt u$ and $B=B^L+\wt B$ and repeat the proof of Proposition \ref{P_5a}
on the system fulfilled by $(\wt u,\wt B,v)$ rather than \eqref{1.100}. 
Instead of \eqref{5.2}, we get
 $$\|(\wt u,\wt B,v)\|_{E_{2,r}(t)}\lesssim  \|v_0\|_{\dot B^{\frac12}_{2,r}}
+\|(u,B,v)\|_{\wt L_t^{\frac43}(\dot B^2_{2,r})} \|(u,B,v)\|_{\wt L_t^{4}(\dot B^1_{2,r})}$$ 
 from which we deduce that 
  $$\displaylines{\|(\wt u,\wt B,v)\|_{E_{2,r}(t)}\lesssim  \|v_0\|_{\dot B^{\frac12}_{2,r}}
  +\|(u^L,B^L)\|_{\wt L_t^{\frac43}(\dot B^2_{2,r})} \|(u^L,B^L)\|_{\wt L_t^{4}(\dot B^1_{2,r})}
  \hfill\cr\hfill+ \|(u^L,B^L)\|_{\wt L_t^{\frac43}(\dot B^2_{2,r})\cap \wt L_t^{4}(\dot B^1_{2,r})}
  \|(\wt u,\wt B,v)\|_{E_{2,r}(t)}  +\|(\wt u,\wt B,v)\|_{E_{2,r}(t)}^2. }$$ 
Since, by dominated convergence theorem, we have  
$$\lim_{t\to0}\,\bigl( \|(u^L,B^L)\|_{\wt L_t^{\frac43}(\dot B^2_{2,r})} +\|(u^L,B^L)\|_{\wt L_t^{4}(\dot B^1_{2,r})}\bigr)=0,$$
it is easy to see that if $\|v_0\|_{\dot B^{\frac12}_{2,r}}$ is small enough, then 
one can get a control on $\|(\wt u,\wt B,v)\|_{E_{2,r}(t)}$ for small enough $t.$
From this, repeating essentially the same arguments as in the small data case,
one gets a local-in-time existence statement.

 %%%%%%%%%%%%%%%%%%%%%%%%%%%%%%%%%%%%%%%%%%%%%%%%%%%%%

\appendix
\section{Besov Spaces and commutator estimates}
Here, we briefly recall the definition of the Littlewood-Paley decomposition, define Besov spaces
and list  some properties  that have been used repeatedly in the paper. 
For the reader's convenience, we also prove some nonlinear and commutator estimates. 
More details and proofs may be found in  e.g.  \cite{Ba11}.

\medbreak

 The Littlewood-Paley decomposition is a dyadic localization procedure in the frequency space for tempered distributions over $\mathbb R^{d}$. 
 To  define it, fix some nonincreasing smooth radial function  $\chi$   on $\R^d,$ supported 
in (say) $B(0,4/3)$ and with value $1$ on $B(0,3/4),$  and 
set  $\varphi(\xi):=\chi(\xi/2)-\chi(\xi).$  Then, we have 
$$\forall \xi \in \mathbb{R}^{d}, ~\chi (\xi) + \sum_{j\geq0}\varphi(2^{-j}\xi) = 1\andf
\forall \xi \in \mathbb{R}^{d}\setminus\{0\}, ~\sum_{j\in \mathbb{Z} }\varphi(2^{-j}\xi) = 1.
$$

The homogeneous dyadic blocks $\dot\Delta_{j}$ and  low-frequency cut-off operator $\dot S_{j}$ are defined for all $j\in\mathbb Z$ by
$$\begin{aligned}
\dot\Delta_{j}u~:=\varphi(2^{-j}D)u=2^{jd}\int_{\mathbb R^{d}}~h(2^{j}y)u(x-y)\,dy\with h{:=}\mathcal{F}^{-1}\varphi, \\
\dot S_{j}u~:=\chi (2^{-j}D)u=2^{jd}\int_{\mathbb R^{d}}~\tilde{h}(2^{j}y)u(x-y)\,dy\with \tilde{h}{:=}\mathcal{F}^{-1}\chi.
\end{aligned}$$
 The following  \emph{Littlewood-Paley decomposition} of $u$:
\begin{equation*}
 u=\sum_{j\in\mathbb Z}\dot\Delta_{j}u
\end{equation*}
 holds true modulo polynomials for any tempered distribution $u$.
 In order to have an equality in the sense of  tempered
 distributions, we  consider only elements of  the set $\mathcal{S}^{'}_{h}(\mathbb R^{d})$ of tempered distributions $u$ such that
\begin{equation*}
\underset{j\to-\infty}{\lim}~\|\dot S_{j}u\|_{L^{\infty}}=0.
\end{equation*}

\begin{definition}
Let $s$ be a real number and $(p, r)$ be in $[1,\infty]^{2}$. The  homogeneous Besov space $\dot B^{s}_{p, r}$ is the set of distributions $u$ in $\mathcal{S}^{'}_{h}$ such that
\begin{equation*}
\|u\|_{\dot B^{s}_{p, r}}{:=}\|2^{js}\|\dot\Delta_{j}u\|_{L^{p}(\mathbb{R}^d)}\|_{\ell^r(\mathbb{Z})}<\infty.
\end{equation*}
\end{definition}

\begin{proposition}\label{P_25}
The following properties hold true:
\begin{itemize}
\item[(i)]  Derivatives: for all $s\in\mathbb R$~~and~~$1\leq p, r\leq\infty,$ 
we have 
\begin{equation*}
\sup_{|\alpha|=k} \|\partial^{\alpha}u\|_{\dot B^{s}_{p, r}}\sim\|u\|_{\dot B^{s+k}_{p, r}}.
\end{equation*}
\item[(ii)] Embedding: we have the following continuous embedding
$$\dot B^{s}_{p, r}\hookrightarrow \dot B^{s-d(\frac{1}{p}-\frac{1}{\tilde{p}})}_{\tilde{p}, \tilde{r}}\quad{\rm{ whenever }} \ \tilde{p}\geq p
\andf  \tilde{r}\geq r,$$
and the space $\dot B^{\frac dp}_{p,1}$ is embedded in the set
of bounded continuous functions.
\item[(iii)]  Real interpolation: for any~$\theta\in(0, 1)$ and $\,s<\tilde s,$ we have
$$\|u\|_{\dot B^{\theta s+(1-\theta)\tilde{s}}_{p, 1}}\lesssim\|u\|_{\dot B^{s}_{p, \infty}}^{\theta}\|u\|_{\dot B^{\tilde{s}}_{p, \infty}}^{1-\theta}.$$
\item[(iv)]  Completeness: the space $\dot B^s_{p,r}$ is complete if (and only if)
$(s,p,r)$ satisfies 
\begin{equation}\label{eq:cond}
s<\frac{d}{p},\quad{\rm{or}} ~~~s=\frac{d}{p}\andf r=1.
\end{equation}
\item[(v)]  Density: the space $\cS_0(\R^d)$ of Schwartz functions on $\R^d$
with Fourier transform supported away from the origin is dense in $\dot B^s_{p,r}$ 
whenever both $p$ and $r$ are finite.
\item[(vi)] Scaling invariance: for any $s\in\R$ and $(p,r)\in[1,\infty]^2,$ there exists a constant $C$ such that
for all positive $\lambda$ and $u\in\dot B^s_{p,r},$ we have
$$
C^{-1}\lambda^{s-\frac dp} \|u\|_{\dot B^s_{p,r}}\leq 
\|u(\lambda\cdot)\|_{\dot B^s_{p,r}}\leq C\lambda^{s-\frac dp}  \|u\|_{\dot B^s_{p,r}}.
$$
\item[(vii)]  Let $f$ be a smooth function on $\mathbb{R}^{d}\setminus\{0\}$ which is homogeneous of degree 0.  Define $f(D)$ on $\cS(\R^d)$  by \begin{equation*}
\mathcal{F}(f(D)u)(\xi){:=}f(\xi)\mathcal{F}u(\xi),
\end{equation*}
Then,  for all exponents $(s,p,r),$ we have the estimate
$$\|f(D) u\|_{\dot B^s_{p,r}}\lesssim \|u\|_{\dot B^s_{p,r}}.$$
If  in addition $f(D)$  extends to  a map  from ${\mathcal S}'_h(\R^d)$ to itself
and  \eqref{eq:cond} is fulfilled, then $f(D)$ is continuous from $\dot B^{s}_{p, r}$ to $\dot B^{s}_{p, r}.$ 
 \item[(viii)]  
Operator ${\rm curl}^{-1}$ maps $\dot B^{s-1}_{p,1}$ to $\dot B^s_{p,1}$ if $1\leq p<\infty$ and $s\leq d/p.$
\end{itemize}
\end{proposition}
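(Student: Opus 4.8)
The plan is to dispatch these eight items, all of which are classical in Littlewood--Paley theory, by reducing each to the \emph{Bernstein inequalities}: for a tempered distribution $g$ whose Fourier transform is supported in an annulus $\{c2^j\leq|\xi|\leq C2^j\}$ one has $\|\partial^\alpha g\|_{L^p}\sim 2^{j|\alpha|}\|g\|_{L^p}$, and for $\tilde p\geq p$ one has $\|g\|_{L^{\tilde p}}\lesssim 2^{jd(\frac1p-\frac1{\tilde p})}\|g\|_{L^p}$. Complete proofs are in \cite{Ba11}; below I only indicate the mechanism behind each statement, isolating the one point that needs genuine work.

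Applying the first Bernstein inequality to $\dot\Delta_j u$ gives $2^{j(s+k)}\|\dot\Delta_j u\|_{L^p}\sim 2^{js}\sup_{|\alpha|=k}\|\partial^\alpha\dot\Delta_j u\|_{L^p}$, and taking the $\ell^r(\Z)$ norm yields (i). For (ii), the second Bernstein inequality bounds $2^{(s-d(\frac1p-\frac1{\tilde p}))j}\|\dot\Delta_j u\|_{L^{\tilde p}}$ by $2^{sj}\|\dot\Delta_j u\|_{L^p}$, and the inclusion $\ell^r\hookrightarrow\ell^{\tilde r}$ for $\tilde r\geq r$ closes the estimate; the embedding into bounded continuous functions comes from $\|u\|_{L^\infty}\leq\sum_j\|\dot\Delta_j u\|_{L^\infty}\lesssim\sum_j 2^{jd/p}\|\dot\Delta_j u\|_{L^p}=\|u\|_{\dot B^{d/p}_{p,1}}$, the absolute convergence of the series forcing continuity of the sum. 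Property (iii) is a purely dyadic split-and-optimize argument: writing $a_j:=\|\dot\Delta_j u\|_{L^p}$, one uses $a_j\leq 2^{-js}\|u\|_{\dot B^s_{p,\infty}}$ on low frequencies and $a_j\leq 2^{-j\tilde s}\|u\|_{\dot B^{\tilde s}_{p,\infty}}$ on high frequencies, cuts the sum $\sum_j 2^{j(\theta s+(1-\theta)\tilde s)}a_j$ at a frequency $N$, and optimizes over $N$ (here $s<\tilde s$ guarantees the two geometric sums converge on their respective halves). Items (iv) and (v) are the standard completeness and density facts for homogeneous Besov spaces, and (vi) follows from $\widehat{u(\lambda\cdot)}=\lambda^{-d}\hat u(\cdot/\lambda)$, which merely rescales the annulus that $\dot\Delta_j$ localizes to, combined with the change of variables $x\mapsto\lambda x$ in the $L^p$ norm.

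The one item deserving real attention is (vii). The key observation is that, since $f$ is homogeneous of degree $0$, the symbol of $\dot\Delta_j f(D)=\varphi(2^{-j}D)f(D)$ is $\varphi(2^{-j}\xi)f(\xi)=(\varphi f)(2^{-j}\xi)$, a \emph{fixed-profile dilation} of $\varphi f$. Because $\varphi f$ lies in $\cS$ (it is the product of a function supported in an annulus and a function smooth away from the origin), its inverse Fourier transform $G:=\mathcal F^{-1}(\varphi f)$ is Schwartz, and the convolution kernel of $\dot\Delta_j f(D)$ is $2^{jd}G(2^j\cdot)$, whose $L^1$ norm equals $\|G\|_{L^1}$ \emph{independently of $j$}. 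Since $\varphi(2^{-j}D)$ only sees frequencies of size $\sim 2^j$, one has $\dot\Delta_j f(D)u=\bigl(2^{jd}G(2^j\cdot)\bigr)*\wt{\dot\Delta}_j u$ with $\wt{\dot\Delta}_j$ a fattened block, so Young's inequality gives $\|\dot\Delta_j f(D)u\|_{L^p}\lesssim\|\wt{\dot\Delta}_j u\|_{L^p}$ with a constant uniform in $j$; multiplying by $2^{js}$ and taking the $\ell^r$ norm yields the estimate. The extension to a continuous self-map of $\dot B^s_{p,r}$ under \eqref{eq:cond} then follows because in that range the space is complete and $\cS_0$ is dense, so the bound propagates from the dense subspace. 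This uniform-in-$j$ kernel bound is the crux of the whole proposition and the only genuine obstacle.

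Finally, (viii) is a corollary of (i) and (vii). Writing ${\rm curl}^{-1}=(-\Delta)^{-1}\nabla\times$, its symbol $i\xi\times(\cdot)/|\xi|^2$ factors as $|\xi|^{-1}$ times the degree-$0$ symbol $i(\xi/|\xi|)\times(\cdot)$; equivalently, ${\rm curl}^{-1}$ is an order-$(-1)$ Fourier multiplier homogeneous of degree $-1$. The degree-$0$ factor is handled by (vii), while the $|D|^{-1}$ factor contributes an extra $2^{-j}$ gain in the Young estimate of the previous paragraph, converting the weight $2^{j(s-1)}$ into $2^{js}$; summing in $\ell^1$ shows ${\rm curl}^{-1}$ maps $\dot B^{s-1}_{p,1}$ into $\dot B^s_{p,1}$. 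The hypotheses $1\leq p<\infty$ and $s\leq d/p$ are precisely \eqref{eq:cond} with $r=1$, which is exactly what is needed for the target space to be a genuine completed space on which the resulting multiplier acts continuously.
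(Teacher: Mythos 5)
Your proposal is correct and follows essentially the same route as the paper, which cites \cite{Ba11} for items (i)--(vii) and gives a proof only of item (viii), exactly as you do: establish the uniform bound on the dense subspace $\cS_0(\R^d)$ via the degree $-1$ homogeneity of the symbol, then extend by density of $\cS_0$ in the source space and completeness of the target space. Your only slight imprecision is attributing the hypothesis $1\leq p<\infty$ to \eqref{eq:cond}: in fact $p<\infty$ (together with $r=1<\infty$) is what guarantees, via item (v), the density of $\cS_0$ in the source space $\dot B^{s-1}_{p,1}$, while \eqref{eq:cond} with $r=1$, i.e.\ $s\leq d/p$, is what gives completeness of the target $\dot B^{s}_{p,1}$.
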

\begin{proof} We only prove  the last  item as it is fundamental in our analysis. 
  Owing to the definition  in 
\eqref{eq:curl-1}, it is obvious that  ${\rm curl}^{-1}$ maps  $\cS_0(\R^d)$ to itself, and 
 homogeneity of degree $-1$ implies that  
we have for all $u$ in $\cS_0(\R^d)$:
$$\| {\rm curl}^{-1} u\|_{\dot B^s_{p,1}}\lesssim \|u\|_{\dot B^{s-1}_{p,1}}.$$
As $\cS_0(\R^d)$ is dense in $\dot B^{s-1}_{p,1}$ and since the space $\dot B^s_{p,1}$ is complete
(owing to $s\leq d/p$),  we get the result. 
\end{proof}

A great deal of our analysis relies on regularity estimates for the  heat equation:
$$ \left\{\begin{aligned}
 &\partial_{t}u-\Delta u = f, \\
&u|_{t=0} = u_{0} .\end{aligned}
\right.\leqno(H)$$
It is classical that for all 
  $u_{0}\in \mathcal{S}'(\mathbb R^{d})$ and $f\in L^1_{loc}(\mathbb R^{+}; \mathcal{S}'(\mathbb R^{d})),$ equation $(H)$ has a unique tempered distribution solution, 
  given by  the following Duhamel formula:
\begin{equation}
u(t)=e^{t\Delta}u_{0}+\int_{0}^{t} e^{(t-\tau)\Delta}f(\tau)~d\tau,\qquad t\geq0.\label{2.1}
\end{equation}
Above,    $(e^{t\Delta})_{t\geq0}$ stands for the heat semi-group. 
It is defined on $\mathcal{S}(\R^d)$ by 
\begin{equation}\label{eq:heat}
\mathcal{F}(e^{t\Delta}z)(\xi) := e^{-t|\xi|^{2}}\widehat z(\xi),
\end{equation}
and is  extended to  the set of tempered distributions by duality. 
\medbreak
As observed  by Chemin  in \cite{Ch99}, the following spaces are suitable for describing the maximal regularity properties of the heat equation. 
\begin{definition}\label{d:tilde}
\noindent For $T>0$, $s\in\mathbb R$, $1\leq\rho\leq\infty$, we set
\begin{equation*}
\|u\|_{\widetilde{L}^{\rho}_{T}(\dot B^{s}_{p, r})} :~=\bigl\|2^{js}\|\dot\Delta _{j}u\|_{{L}^{\rho}_{T}(L^{p})}\bigr\|_{\ell^r(\Z)}.
\end{equation*}
\noindent We define the space $\widetilde{L}^{\rho}_{T}(\dot B^{s}_{p, r})$ to be  the set of tempered distribution $u$ on $(0, T)\times\mathbb R^{d}$ such that $\underset{j\to-\infty}{\lim}~\|\dot S_{j}u(t)\|_{L^\infty}=0$ a.e. in $(0,T),$ and $\|u\|_{\widetilde{L}^{\rho}_{T}(\dot B^{s}_{p, r})}<\infty$.
The space $\widetilde{L}^{\rho}_{T}(\dot B^{s}_{p, r})\cap\cC([0,T];\dot B^s_{p,r})$ is
denoted by $\wt\cC_T(\dot B^s_{p,r}).$ 
In the case $T=+\infty,$ one denotes the corresponding space and norm
by  $\widetilde{L}^{\rho}(\dot B^{s}_{p, r})$ and  $\|\cdot\|_{\widetilde{L}^{\rho}(\dot B^{s}_{p, r})},$
respectively.
\end{definition}
The above spaces or norms may be compared to  more classical ones
according to  Minkowski's inequality:
\begin{equation*}
\|u\|_{\widetilde{L}^{\rho}_{T}(\dot B^{s}_{p, r})}\leq\|u\|_{{L}^{\rho}_{T}(\dot B^{s}_{p, r})}~~{\rm{if}}~~r\geq\rho\andf
\|u\|_{\widetilde{L}^{\rho}_{T}(\dot B^{s}_{p, r})}\geq\|u\|_{{L}^{\rho}_{T}(\dot B^{s}_{p, r})}~~{\rm{if}}~~r\leq\rho.
\end{equation*}

The following fundamental result has been  proved in  \cite{Ch99}.
\begin{proposition}\label{Le_27}
Let $T>0$, $s\in\mathbb{R}$ and $1\leq\rho, p, r\leq\infty$. Assume that $u_{0}\in\dot B^{s}_{p, r}$ and $f\in\widetilde{L}^{\rho}_{T}(\dot B^{s-2+\frac{2}{\rho}}_{p, r})$. Then, $(H)$ has a unique solution $u$ in $\widetilde{L}^{\rho}_{T}(\dot B^{s+\frac{2}{\rho}}_{p, r})\cap\widetilde{L}^{\infty}_{T}(\dot B^{s}_{p, r})$ and there exists a constant $C$ depending only on $d$ and such that for all $\rho_{1}\in[\rho, \infty]$, we have
\begin{equation}\label{2.2}
\|u\|_{\widetilde{L}^{\rho_{1}}_{T}(\dot B^{s+\frac{2}{\rho_{1}}}_{p, r})}\leq C(\|u_{0}\|_{\dot B^{s}_{p, r}} + \|f\|_{\widetilde{L}^{\rho}_{T}(\dot B^{s-2+\frac{2}{\rho}}_{p, r})}).
\end{equation}
Furthermore,  if  $r$ is finite, then $u$ belongs to $\mathcal{C}([0,T]; \dot B^{s}_{p, r})$.
%\begin{equation}\label{2.3}
%\lim\limits_{T\to 0}\|u\|_{\widetilde{L}^{\rho}_{T}(\dot B^{s+\frac{2}{\rho}}_{p, r})}=0,~~~\rm{whenever}~~1\leq\rho<\infty.\end{equation}
\end{proposition}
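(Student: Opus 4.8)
The plan is to reduce everything to a single dyadic smoothing estimate for the heat flow, and then to sum up. Existence and uniqueness of a solution in $\cS'$ are not really at stake here: they are already furnished by the Duhamel formula \eqref{2.1}. Hence the whole content is the a priori bound \eqref{2.2}, together with the time continuity when $r<\infty.$ First I would localize $(H)$ in frequency: since $\dot\Delta_j$ is a Fourier multiplier, it commutes with $\partial_t-\Delta,$ so that $\dot\Delta_j u$ solves the heat equation with data $\dot\Delta_j u_0$ and source $\dot\Delta_j f,$ whence
$$\dot\Delta_j u(t)=e^{t\Delta}\dot\Delta_j u_0+\int_0^t e^{(t-\tau)\Delta}\dot\Delta_j f(\tau)\,d\tau.$$

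The key step, and the one I expect to be the main obstacle, is the following smoothing estimate for spectrally localized functions: there exist universal constants $c,C>0$ such that, for every $j\in\Z$ and every $z$ with $\mathrm{Supp}\,\widehat z\subset\{2^{j-1}\leq|\xi|\leq 2^{j+1}\},$ one has $\|e^{t\Delta}z\|_{L^p}\leq Ce^{-ct2^{2j}}\|z\|_{L^p}$ for all $t\geq0.$ To establish it I would write $e^{t\Delta}z=g_{j,t}\ast z$ with $g_{j,t}:=\mathcal F^{-1}\bigl(e^{-t|\xi|^2}\phi(2^{-j}\xi)\bigr),$ where $\phi$ is a fixed smooth function supported in $\{1/4\leq|\eta|\leq4\}$ and equal to $1$ on $\{1/2\leq|\eta|\leq2\},$ so that $\phi(2^{-j}\cdot)\widehat z=\widehat z.$ A change of variables gives $\|g_{j,t}\|_{L^1}=\|G(\cdot,t2^{2j})\|_{L^1}$ with $G(\cdot,s):=\mathcal F^{-1}\bigl(e^{-s|\eta|^2}\phi\bigr),$ and since $|\eta|^2\geq1/16$ on $\mathrm{Supp}\,\phi,$ the function $e^{-s|\eta|^2}\phi$ and its derivatives are bounded by $Ce^{-cs};$ the Cauchy--Schwarz bound $\|G\|_{L^1}\lesssim\|G\|_{L^2}+\|\,|\cdot|^NG\|_{L^2}$ with $N>d/2$ together with Plancherel then yields $\|g_{j,t}\|_{L^1}\leq Ce^{-ct2^{2j}},$ and Young's convolution inequality concludes. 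The delicate points are the uniformity of the constants in $j$ (ensured by scaling) and the bookkeeping of the endpoint exponents $\rho,\rho_1\in\{1,\infty\}.$

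Armed with this lemma, I would insert it into the Duhamel formula to get, for all $t,$
$$\|\dot\Delta_j u(t)\|_{L^p}\leq Ce^{-ct2^{2j}}\|\dot\Delta_j u_0\|_{L^p}+C\int_0^t e^{-c(t-\tau)2^{2j}}\|\dot\Delta_j f(\tau)\|_{L^p}\,d\tau.$$
Taking the $L^{\rho_1}(0,T)$ norm in time, the first term contributes $\|e^{-c\,\cdot\,2^{2j}}\|_{L^{\rho_1}(\R_+)}\lesssim2^{-2j/\rho_1}$ times $\|\dot\Delta_j u_0\|_{L^p},$ while the second is a convolution in time estimated by Young's inequality with $1+\tfrac1{\rho_1}=\tfrac1a+\tfrac1\rho,$ which produces the factor $\|e^{-c\,\cdot\,2^{2j}}\|_{L^a(\R_+)}\lesssim2^{-2j/a}=2^{-2j(1+1/\rho_1-1/\rho)}$ (here $\rho_1\geq\rho$ guarantees $a\in[1,\infty]$). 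Multiplying by $2^{j(s+2/\rho_1)}$ makes the powers of $2^j$ collapse exactly to $2^{js}$ for the data term and to $2^{j(s-2+2/\rho)}$ for the source term, so taking the $\ell^r(\Z)$ norm yields \eqref{2.2}.

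Finally, for the time continuity when $r<\infty,$ each map $t\mapsto\dot\Delta_j u(t)$ is continuous from $[0,T]$ into $L^p$ (by the Duhamel formula and dominated convergence, using $\dot\Delta_j f\in L^\rho_T(L^p)$ at fixed $j$), hence so is every finite partial sum $\sum_{|j|\leq N}\dot\Delta_j u.$ The bound just obtained shows that $\bigl(2^{js}\|\dot\Delta_j u(t)\|_{L^p}\bigr)_{j}$ lies in $\ell^r$ uniformly in $t,$ so finiteness of $r$ forces the partial sums to converge uniformly in $t$ in $\dot B^s_{p,r},$ whence continuity of the limit. This would complete the proof.
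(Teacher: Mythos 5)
Your proof is correct and is essentially the expected argument: the paper gives no proof of this proposition, deferring to Chemin \cite{Ch99} (see also \cite{Ba11}), and the proof there follows exactly your scheme --- the uniform dyadic smoothing estimate $\|e^{t\Delta}\dot\Delta_j z\|_{L^p}\leq Ce^{-ct2^{2j}}\|\dot\Delta_j z\|_{L^p}$ (obtained by the same rescaling and weighted--Plancherel bound on the localized kernel), fed into the Duhamel formula, followed by Young's inequality in time with $1+1/\rho_1=1/a+1/\rho$ and $\ell^r(\Z)$ summation. Your continuity argument for $r<\infty$ is also sound, with the small caveat that mere uniform-in-$t$ boundedness in $\ell^r$ would not suffice: what makes the partial sums converge uniformly is that the $\rho_1=\infty$ case of your termwise bound dominates the dyadic tails by a $t$-independent $\ell^r$ sequence, which is indeed what your estimate provides.
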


Let us now recall a few nonlinear estimates in Besov spaces, that we used in the paper. 
They all may be easily
proved by using the following so-called Bony decomposition (from \cite{Bo81}) 
for the (formal)  product of two distributions $u$ and $v$:
$$u\,v=T_uv+T_vu+R(u, v).$$
Above, $T$ designates the paraproduct bilinear operator defined by 
$$T_uv := \sum_{j}\dot{S}_{j-1}u\ddj v,\quad T_{v}u := \sum_{j}\dot{S}_{j-1}v\,\ddj u$$
and $R$ stands for the remainder operator given by 
$$R(u, v) :=\sum_{j}\sum_{|j'-j|\leq 1}\ddj u\,\dot{\Delta}_{j'}v.$$
The following properties   of  the paraproduct and remainder operators are classical:
\begin{proposition}\label{P_para}
For any $(s, p, r)\in\mathbb{R}\times[1, \infty]^2$ and $t<0$, there exists a constant $C$ such that
$$\|T_uv\|_{\dot{B}^{s}_{p, r}}\leq C\,\|u\|_{L^\infty}\|v\|_{\dot{B}^s_{p, r}}\quad{\rm{and}}\quad \|T_uv\|_{\dot{B}^{s+t}_{p, r}}\leq C\,\|u\|_{\dot{B}^{t}_{\infty, \infty}}\|v\|_{\dot{B}^s_{p, r}}\cdotp$$
For any $(s_1, p_1, r_1)$ and $(s_2, p_2, r_2)$ in $\mathbb{R}\times[1, \infty]^2$  satisfying
$$s_1+s_2>0,\quad \frac{1}{p} :=\frac{1}{p_1}+\frac{1}{p_2}\leq 1\andf\frac{1}{r} :=\frac{1}{r_1}+\frac{1}{r_2}\leq 1,$$ there exists a constant $C$ such that 
$$\|R(u, v)\|_{\dot{B}^{s_1+s_2}_{p, r}}\leq C\,\|u\|_{\dot{B}^{s_1}_{p_1, r_1}}\|v\|_{\dot{B}^{s_2}_{p_2, r_2}}.$$
\end{proposition}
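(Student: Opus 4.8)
The plan is to prove all three inequalities by a single scheme: decompose each operator into its dyadic pieces, exploit the spectral localization of every piece, apply Bernstein's inequality together with the uniform boundedness of $\dot\Delta_k$ on $L^p$, and finally sum over frequencies by means of Young's convolution inequality for series. The only ingredient specific to each estimate is the geometric decay that makes the frequency sum converge, and this is precisely where the hypotheses $t<0$ and $s_1+s_2>0$ enter.

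For the two paraproduct estimates I would first record the elementary observation that, since $\dot S_{j-1}u$ is frequency-supported in a ball of radius $\sim2^{j}$ while $\ddj v$ is supported in a dyadic annulus of radius $\sim2^{j}$, the product $\dot S_{j-1}u\,\ddj v$ is spectrally supported in an annulus $\{C^{-1}2^{j}\le|\xi|\le C2^{j}\}$. Consequently only finitely many indices $j$ contribute to $\dot\Delta_k(T_uv)$, namely those with $|k-j|\le N_0$ for some absolute integer $N_0$. Using $\|\dot\Delta_k(\dot S_{j-1}u\,\ddj v)\|_{L^p}\lesssim\|\dot S_{j-1}u\|_{L^\infty}\|\ddj v\|_{L^p}$, the first inequality follows from $\|\dot S_{j-1}u\|_{L^\infty}\le\|u\|_{L^\infty}$ after convolving the resulting sequence with the finitely supported kernel $(2^{(k-j)s})_{|k-j|\le N_0}$. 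For the second inequality I would instead bound $\|\dot S_{j-1}u\|_{L^\infty}\le\sum_{j'\le j-2}\|\dot\Delta_{j'}u\|_{L^\infty}\lesssim2^{-jt}\|u\|_{\dot B^{t}_{\infty,\infty}}$, where the geometric series converges precisely because $t<0$; inserting the extra factor $2^{-jt}$ and weighting by $2^{k(s+t)}$ then reproduces the same convolution structure.

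For the remainder, the crucial difference is that the generic summand $\ddj u\,\dot\Delta_{j'}v$ with $|j-j'|\le1$ is supported in a \emph{ball} of radius $\sim2^{j}$ rather than an annulus, so that $\dot\Delta_k R(u,v)$ now receives contributions from all $j\ge k-N_0$. Setting $a_j:=2^{js_1}\|\ddj u\|_{L^{p_1}}$ and $b_j:=2^{js_2}\|\dot\Delta_{j}v\|_{L^{p_2}}$, Bernstein's inequality and H\"older's inequality with $1/p=1/p_1+1/p_2$ yield $2^{k(s_1+s_2)}\|\dot\Delta_k R(u,v)\|_{L^p}\lesssim\sum_{j\ge k-N_0}2^{(k-j)(s_1+s_2)}a_jb_j$. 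Since $s_1+s_2>0$, the kernel $2^{(k-j)(s_1+s_2)}\mathbf 1_{\{j\ge k-N_0\}}$ is summable in the variable $k-j$, so Young's inequality reduces the claim to $(a_jb_j)_j\in\ell^{r}$; this last point is H\"older's inequality for series with $1/r=1/r_1+1/r_2$, which gives $\|(a_jb_j)\|_{\ell^{r}}\le\|(a_j)\|_{\ell^{r_1}}\|(b_j)\|_{\ell^{r_2}}$ as needed.

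I do not anticipate a genuine obstacle here, as these are by now standard estimates; the only points requiring care are the bookkeeping of the finite frequency overlaps (the absolute constant $N_0$) and verifying that the convergence conditions $t<0$ and $s_1+s_2>0$ are invoked exactly at the summation of the geometric series. Since $p,r\ge1$ throughout, the ordinary triangle, H\"older and Young inequalities suffice and no quasi-norm subtleties arise.
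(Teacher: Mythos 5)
Your proof is correct, and it is exactly the classical argument the paper has in mind: the paper states Proposition \ref{P_para} without proof, citing it as standard (see \cite[Th. 2.47 and 2.52]{Ba11}), and your scheme --- spectral localization of $\dot S_{j-1}u\,\ddj v$ in annuli (so $|k-j|\leq N_0$) versus localization of the remainder blocks in balls (so $j\geq k-N_0$), followed by H\"older, the geometric-series bounds $\|\dot S_{j-1}u\|_{L^\infty}\lesssim 2^{-jt}\|u\|_{\dot B^t_{\infty,\infty}}$ for $t<0$ and summability of $2^{m(s_1+s_2)}$ for $m\le N_0$ when $s_1+s_2>0$, and Young/H\"older in $\ell^r$ --- is precisely that proof. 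The only cosmetic point is that your appeal to Bernstein's inequality in the remainder estimate is superfluous, since H\"older with $1/p=1/p_1+1/p_2$ already lands you in $L^p$ and no change of Lebesgue exponent occurs.
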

Combining the above proposition with the Bony decomposition allows to get 
a number  of inequalities like, for instance:
\begin{itemize}
\item tame estimates:  for any $s>0$ and $1\leq p,r\leq\infty,$
\begin{equation}\label{eq:tame1}
\|uv\|_{\dot B^s_{p,r}}\lesssim \|u\|_{L^\infty} \|v\|_{\dot B^s_{p,r}}
+  \|v\|_{L^\infty} \|u\|_{\dot B^s_{p,r}};\end{equation}
\item the following product estimate: 
\begin{equation}\label{prod1}
\|uv\|_{\dot B^{s_{1}+s_{2}-\frac{d}{p}}_{p, 1}}\lesssim\|u\|_{\dot B^{s_{1}}_{p, 1}}\|v\|_{\dot B^{s_{2}}_{p,  1}}
\end{equation}
whenever\footnote{In particular, $\dot B^{\frac{d}{p}}_{p, 1}$ is an algebra for any $1\leq p<\infty.$}  $s_{1}, s_{2}\leq\frac{d}{p}$~satisfy $s_{1}+s_{2}>d\max(0,\frac 2p-1)$; 
\item  the following inequality (in the case $d=3$ and $\rho>2$)
that has been used in the proof of \eqref{blowup3}:
\begin{equation}\label{prod2}
\|ab\|_{\dot B^{\frac{3}{2}}_{2, 1}}\lesssim \|a\|_{\dot B^{\frac{2}{\rho}-1}_{\infty, \infty}}\|b\|_{\dot B^{\frac52-\frac{2}{\rho}}_{2, 1}}+ \|b\|_{\dot B^{\frac{2}{\rho}-1}_{\infty, \infty}}\|a\|_{\dot B^{\frac52-\frac{2}{\rho}}_{2, 1}}.
\end{equation}
\end{itemize}

\begin{remark}\label{Re_3} Proposition \ref{P_para} and 
estimates like \eqref{prod1} or \eqref{prod2} may be adapted  to the spaces $\widetilde{L}^{\rho}_{T}(\dot B^{s}_{p, r})$. The general principle is that the time exponent behaves according to H\"older inequality.
 For example, we have
$$\|T_ab\|_{\wt L_t^1(\dot B^{\frac12}_{2,r})}+\|T_ba\|_{\wt L_t^1(\dot B^{\frac12}_{2,r})}+\|R(a, b)\|_{\wt L_t^1(\dot B^{\frac12}_{2,r})}\lesssim \|a\|_{\wt L^{\frac43}_t(\dot B^{-\frac12}_{\infty,\infty})}
\|b\|_{\wt L^{4}_t(\dot B^1_{2,r})}.$$
Then, combining with embedding (in the case $d=3$) gives Inequality \eqref{5.p1}. 
\medbreak
Similarly,   Inequality \eqref{5.p2} stems from
$$\displaylines{
\|T_ab\|_{\wt L^1_t(\dot B^{\frac32}_{2,r})}+\|T_ba\|_{\wt L_t^1(\dot B^{\frac32}_{2,r})}+\|R(a, b)\|_{\wt L_t^1(\dot B^{\frac32}_{2,r})}\hfill\cr\hfill
\lesssim\|a\|_{\wt L^{4}_t(\dot B^{-\frac12}_{\infty, \infty})}
\|b\|_{\wt L^{\frac43}_t(\dot B^2_{2,r})}+\|b\|_{\wt L^{4}_t(\dot B^{-\frac12}_{\infty, \infty})}\|a\|_{\wt L^{\frac43}_t(\dot B^2_{2,r})}.}$$
In order to prove Inequality \eqref{5.p11}, it suffices to use the fact that 
$$\begin{aligned}
\|T_ab\|_{\wt L_t^1(\dot{B}^{-\frac12}_{2, r})}&\lesssim\|a\|_{\wt L_t^4(\dot{B}^{-\frac12}_{\infty, \infty})}
\| b\|_{\wt L^{\frac43}_t(\dot{B}^{0}_{2, r})},\\
\|T_{b} a\|_{\wt L_t^1(\dot{B}^{-\frac12}_{2, r})}&\lesssim\|b\|_{\wt L_t^{\frac43}(\dot{B}^{-\frac32}_{\infty, \infty)}}\|a\|_{\wt L_t^4(\dot{B}^{1}_{2, r})},\\
\|R(a, b)\|_{\wt L_t^1(\dot{B}^{-\frac12}_{2, r})}
&\lesssim\|a\|_{\wt L_t^4(\dot{B}^{1}_{2, r})}\|b\|_{\wt L_t^{\frac43}(\dot{B}^{0}_{2, r})}.\end{aligned}$$
Proving  Inequality \eqref{5.p22} is similar.
\end{remark}

We end this appendix with the proof of commutator estimates
that were crucial in our analysis. 
\begin{proposition} Let $s$ be in $(0,d/2].$ 
%With the notation  $[\dot\Delta_j,b]a:=\dot\Delta_j(ba)-b\dot\Delta_j a,$ we have for all $s\in(0, \frac{d}{2}]$, 
%\begin{equation}\label{com00}
%\big\|2^{j(s+1)}\|[\dot\Delta_j,b]a\|_{L^2}\bigr\|_{\ell^r(\Z)}\lesssim\|\nabla b\|_{\dot B^{\frac{d}{2}}_{2, \infty}\cap L^\infty}\|a\|_{\dot B^{s}_{2, r}},
%\end{equation}
Then, we have:
\begin{equation}\label{com01}
\sum_{j\in\Z} 2^{js}\|[\dot\Delta_j, b]a\|_{L^2}
\lesssim\|\nabla b\|_{L^\infty}\|a\|_{\dot{B}^{s-1}_{2, 1}}+\|a\|_{\dot B^{-1}_{\infty,\infty}}
\|\nabla b\|_{\dot B^{s}_{2,1}}.
\end{equation}
Furthermore, 
 for all $r\in[1, \infty]$  and    $\rho\in(2, \infty],$ we have if we set   $\,1/\rho' :=1-1/\rho,$
\begin{multline}\label{com2}
\bigl\|2^{js}\| [\dot\Delta_j, b]a\|_{L^1_t(L^2)}\bigr\|_{\ell^r(\Z)}
\lesssim \|\nabla b\|_{\wt L_t^{\rho}(\dot B^{\frac{2}{\rho}-1}_{\infty, \infty})} \|a\|_{\wt L^{\rho'}_t(\dot B^{s-\frac{2}{\rho}}_{2,r})}\\
+\|\nabla b\|_{\wt L^{\rho'}_t(\dot B^{s+1-\frac{2}{\rho}}_{2,r})}\|a\|_{\wt L^{\rho}_t(\dot B^{\frac{2}{\rho}-2}_{\infty, \infty})}.\end{multline}
\end{proposition}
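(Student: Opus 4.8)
The plan is to reduce both estimates to Bony's decomposition of the product together with the frequency-localization properties of $\dot\Delta_j$. Writing $ba=T_ba+T_ab+R(a,b)$ and decomposing likewise $b\,\dot\Delta_j a=T_b\dot\Delta_j a+T_{\dot\Delta_j a}b+R(\dot\Delta_j a,b)$, the commutator splits into three pieces,
$$[\dot\Delta_j,b]a=\underbrace{\bigl(\dot\Delta_j T_b a-T_b\dot\Delta_j a\bigr)}_{I_j}+\underbrace{\bigl(\dot\Delta_j T_a b-T_{\dot\Delta_j a}b\bigr)}_{II_j}+\underbrace{\bigl(\dot\Delta_j R(a,b)-R(\dot\Delta_j a,b)\bigr)}_{III_j}.$$
Only $I_j=\sum_{|k-j|\le N_0}[\dot\Delta_j,\dot S_{k-1}b]\dot\Delta_k a$ carries a genuine commutator cancellation, and this is the term that produces a gradient on $b$; the pieces $II_j$ and $III_j$ have no cancellation but are controlled because the spectral supports force constraints on the summation indices.

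For $I_j$, I would use the convolution representation $\dot\Delta_j f(x)=2^{jd}\!\int h(2^j(x-y))f(y)\,dy$ to write
$$[\dot\Delta_j,\dot S_{k-1}b]\dot\Delta_k a(x)=2^{jd}\!\int h(2^j(x-y))\bigl(\dot S_{k-1}b(y)-\dot S_{k-1}b(x)\bigr)\dot\Delta_k a(y)\,dy,$$
and bound $|\dot S_{k-1}b(y)-\dot S_{k-1}b(x)|\le|x-y|\,\|\nabla b\|_{L^\infty}$ by the mean value theorem. Since $\int 2^{jd}|z||h(2^jz)|\,dz\lesssim 2^{-j}$, Young's inequality gives $\|[\dot\Delta_j,\dot S_{k-1}b]\dot\Delta_k a\|_{L^2}\lesssim 2^{-j}\|\nabla b\|_{L^\infty}\|\dot\Delta_k a\|_{L^2}$; multiplying by $2^{js}$, using $|k-j|\le N_0$ and summing over $j$ yields the contribution $\|\nabla b\|_{L^\infty}\|a\|_{\dot B^{s-1}_{2,1}}$.

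For $II_j$ and $III_j$ I would estimate the blocks directly. Using $\|\dot S_{k-1}a\|_{L^\infty}\lesssim 2^k\|a\|_{\dot B^{-1}_{\infty,\infty}}$, $\|\dot\Delta_k a\|_{L^\infty}\lesssim 2^k\|a\|_{\dot B^{-1}_{\infty,\infty}}$ and $\|\dot\Delta_k b\|_{L^2}\sim 2^{-k}\|\dot\Delta_k\nabla b\|_{L^2}$, every summand reduces to a weighted block norm of $\nabla b$. In $\dot\Delta_j T_a b$ and $R(\dot\Delta_j a,b)$ the indices satisfy $|k-j|\le N_0$, whereas in $T_{\dot\Delta_j a}b$ and $\dot\Delta_j R(a,b)$ they satisfy $k\gtrsim j$, so that after the factor $2^{js}$ one is left with discrete convolutions $\sum_k 2^{(j-k)\sigma}(\cdots)$ whose kernels are summable exactly because $s>0$ (and $s+1>0$); a discrete Young inequality then produces $\|a\|_{\dot B^{-1}_{\infty,\infty}}\|\nabla b\|_{\dot B^s_{2,1}}$. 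This proves \eqref{com01}, the hypothesis $0<s\le d/2=d/p$ (with $p=2$) ensuring moreover that the space $\dot B^s_{2,1}$ is complete.

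Finally, I would obtain \eqref{com2} by running the identical decomposition with two modifications. First, wherever an $L^\infty$-in-space norm appeared I would instead apply Bernstein's inequality, $\|\nabla\dot S_{k-1}b\|_{L^\infty}\lesssim 2^{k(1-\frac2\rho)}\|\nabla b\|_{\dot B^{\frac2\rho-1}_{\infty,\infty}}$ and $\|\dot S_{k-1}a\|_{L^\infty}\lesssim 2^{k(2-\frac2\rho)}\|a\|_{\dot B^{\frac2\rho-2}_{\infty,\infty}}$, the geometric series converging precisely because $\rho>2$ forces $1-\frac2\rho>0$; this is what shifts the space indices by $\frac2\rho$. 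Second, inside the time integral I would split $L^1_t$ via Hölder's inequality as $L^\rho_t\cdot L^{\rho'}_t$, assigning the $\dot B_{\infty,\infty}$ factor the exponent $\rho$ and the $\dot B_{2,r}$ factor the exponent $\rho'$, and take the $\ell^r(\Z)$ norm over $j$ in place of $\ell^1$; the same discrete Young arguments then close the two claimed terms. I expect the main obstacle to be bookkeeping rather than conceptual: organizing the six contributions from $I_j$, $II_j$, $III_j$, tracking for each whether $k$ is comparable to or larger than $j$, and verifying that the exponent multiplying $2^{j-k}$ always has the sign that makes the discrete convolution bounded on $\ell^r$ — it is exactly here that $s>0$ (for the remainder) and $\rho>2$ (for the Bernstein gain) are used.
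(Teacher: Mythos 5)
Your proposal is correct and follows essentially the same route as the paper: the same three-piece Bony splitting of $[\dot\Delta_j,b]a$ into $[\dot\Delta_j,T_b]a$, the paraproduct/remainder terms $\dot\Delta_j(T_ab+R(a,b))$, and $T_{\dot\Delta_ja}b+R(\dot\Delta_ja,b)$, with the Bernstein gain $2^{j'(1-\frac2\rho)}$ (valid since $\rho>2$), H\"older in time splitting $L^1_t=L^\rho_t\cdot L^{\rho'}_t$, and discrete Young on $\ell^r$ using $s>0$. The only cosmetic difference is that you re-derive the key commutator kernel estimate $\|[\dot\Delta_j,\dot S_{k-1}b]\dot\Delta_k a\|_{L^2}\lesssim 2^{-j}\|\nabla \dot S_{k-1}b\|_{L^\infty}\|\dot\Delta_k a\|_{L^2}$ by the mean-value argument, where the paper simply cites it from Bahouri--Chemin--Danchin.
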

\begin{proof}
Proving the two inequalities relies  on the decomposition 
\begin{equation}\label{decompo}
[\dot\Delta_j, b]a=[\dot\Delta_j,T_b]a+ \dot\Delta_j(T_ab+R(a,b))-(T_{\dot\Delta_ja}b+R(\dot\Delta_ja,b)).
\end{equation}
For getting \eqref{com01}, we bound the first term of \eqref{decompo}
as follows (use  \cite[Ineq. (2.58)]{Ba11}):  
\begin{align*}
\sum_{j\in\mathbb{Z}}2^{js}\|[\dot\Delta_j,T_b]a\|_{L^2}\lesssim\|\nabla b\|_{L^\infty}\|a\|_{\dot B^{s-1}_{2,1}}.
\end{align*}
The next two terms of \eqref{decompo} may be bounded by  using the fact that 
the remainder and paraproduct operator
map $\dot B^{-1}_{\infty,\infty}\times\dot B^{s+1}_{2,1}$ to $\dot B^{s}_{2,1}$.
 %Therefore, $$\sum_{j\in\mathbb{Z}}2^{js}\|\dot\Delta_j(T_ab+R(a,b))\|_{L^2}\lesssim\|a\|_{\dot B^{-1}_{\infty,\infty}}\|b\|_{\dot B^{s+1}_{2,1}}.$$
Finally, owing to the properties of localization of the Littlewood-Paley decomposition, we have
\begin{equation}\label{decompo1}
T_{\dot\Delta_ja}b+R(\dot\Delta_ja,b)=\sum_{j'\geq j-2} \dot S_{j'+2}\dot\Delta_ja\,\dot\Delta_{j'}b.
\end{equation}
{}From  Bernstein inequality and  $\| \dot S_{j'+2} a\|_{L^\infty}\lesssim 2^{j'}\|a\|_{\dot B^{-1}_{\infty,\infty}},$ we gather 
$$\begin{aligned}
\sum_{j}2^{js} \|T_{\dot\Delta_ja}b+R(\dot\Delta_ja,b)\|_{L^2}&\lesssim
\sum_j\sum_{j'\geq j-2}2^{js}\|\dot S_{j'+2}a\|_{L^\infty}\|\dot\Delta_{j'}b\|_{L^2}\\
&\lesssim \|a\|_{\dot B^{-1}_{\infty,\infty}}\sum_j\sum_{j'\geq j-2}2^{s(j-j')}\,2^{j's}\|\nabla\dot\Delta_{j'}b\|_{L^2}\\&\lesssim \|a\|_{\dot B^{-1}_{\infty,\infty}}\|\nabla b\|_{\dot B^{s}_{2,1}}.\end{aligned}$$

To prove \eqref{com2}, we observe that owing to the localization properties of the Littlewood-Paley decomposition, the first term of \eqref{decompo} may be decomposed into
$$[\ddj,T_b]a =\sum_{|j'-j|\leq4} [\ddj, \dot S_{j'-1}b]\dot\Delta_{j'}a.$$
Now, according to \cite[Lem. 2.97]{Ba11},  we have 
$$
\| [\ddj, \dot S_{j'-1}b]\dot\Delta_{j'}a\|_{L^2}\lesssim 2^{-j}\|\nabla\dot S_{j'-1}b\|_{L^\infty}\|\dot\Delta_{j'}a\|_{L^2},
$$
and, since $\frac{2}{\rho}-1<0$, 
$$\|\nabla S_{j'-1}b\|_{L_t^\rho(L^\infty)}\lesssim 2^{j'(1-\frac2\rho)} \|\nabla b\|_{\wt L_t^\rho(\dot B^{\frac2\rho-1}_{\infty,\infty})}.$$
Hence, for all $(j,j')\in\Z^2$ such that $|j-j'|\leq 4,$
$$2^{js} \|[\ddj, \dot S_{j'-1}b]\dot\Delta_{j'}a\|_{L^1_t(L^2)}\lesssim 
2^{js} 2^{-\frac{2}{\rho}j'}\|\dot\Delta_{j'}a\|_{L^{\rho'}_t(L^2)}
\|\nabla b\|_{\wt L^{\rho}_t(\dot B^{\frac{2}{\rho}-1}_{\infty,\infty})}.$$
Therefore,  summing up on $j'\in\{j-4,j+4\},$ then 
 taking the $\ell^r(\Z)$ norm,
$$\bigl\|2^{js}\|[\ddj,T_b]a  \|_{L_t^1(L^2)}\bigr\|_{\ell^r} \lesssim  \|\nabla b\|_{\wt L^{\rho}_t(\dot B^{\frac{2}{\rho}-1}_{\infty,\infty})} \|a\|_{\wt L^{\rho'}_t(\dot B^{s-\frac{2}{\rho}}_{2,r})}.$$
The next two terms may be bounded according
to Proposition \ref{P_para} and Remark \ref{Re_3}: 
$$\bigl\|2^{js}\|\ddj T_ab \|_{L_t^1(L^2)}\|_{\ell^r} +\bigl\|2^{js}\|\ddj R(a,b)\|_{L_t^1(L^2)}\|_{\ell^r} 
\lesssim \|a\|_{\wt L^{\rho}_t(\dot B^{\frac{2}{\rho}-2}_{\infty, \infty})}
\|\nabla b\|_{\wt L_t^{\rho'}(\dot B^{s+1-\frac{2}{\rho}}_{2,r})}.$$
Finally, use  \eqref{decompo1} and the fact that 
$$\| \dot S_{j'+2} a\|_{L^{\rho}_t(L^\infty)}\lesssim 2^{(2-\frac{2}{\rho})j'}\|a\|_{\wt L^{\rho}_t(\dot B^{\frac{2}{\rho}-2}_{\infty,\infty})}$$ to get
 $$\begin{aligned}
 2^{js}\| T_{\ddj a}b+R(\ddj a,b)\|_{L^1_t(L^2)}
&\lesssim \sum_{j'\geq j-2}2^{js}\|\dot S_{j'+2}a\|_{L^{\rho}_t(L^\infty)}\|\dot\Delta_{j'}b\|_{L^{\rho'}_t(L^2)}\\
 &\lesssim  \|a\|_{\wt L_t^{\rho}(\dot{B}^{\frac{2}{\rho}-2}_{\infty, \infty})}\!\sum_{j'\geq j-2} \!\!2^{s(j-j')}\,2^{(s+2-\frac{2}{\rho})j'}\!\|\dot\Delta_{j'} b\|_{L_t^{\rho'}(L^2)}.
\end{aligned}$$
 Taking the $\ell^r(\Z)$ norm of both sides and using a convolution inequality for series
 (remember that  $s>0$),  we end up with 
 $$
 \bigl\|2^{js}\| T_{\ddj a}b+R(\ddj a,b)\|_{L^1_t(L^2)}\bigr\|_{\ell^r(\Z)} \lesssim
  \|a\|_{\wt L^{\rho}_t(\dot B^{\frac{2}{\rho}-2}_{\infty,\infty})}
\|\nabla b\|_{\wt L_t^{\rho'}(\dot B^{s+1-\frac{2}{\rho}}_{2,r})}.$$
This completes the proof of Inequality \eqref{com2}. 
\end{proof}

\bigbreak\bigbreak

\noindent\textsc{Universit\'e Paris-Est Cr\'eteil,  LAMA UMR 8050, 61 avenue du G\'en\'eral de Gaulle,  94010 Cr\'eteil
et Sorbonne Universit\'e, LJLL UMR 7598, 4 Place Jussieu, 75005 Paris}\par\nopagebreak
E-mail address: raphael.danchin@u-pec.fr
\medbreak
\noindent\textsc{Universit\'e Paris-Est Cr\'eteil,  LAMA UMR 8050, 61 avenue du G\'en\'eral de Gaulle,  94010 Cr\'eteil}\par\nopagebreak
E-mail address: jin.tan@u-pec.fr

\end{document}